\documentclass[12pt,oneside]{amsart}
\usepackage[margin=1in]{geometry}
\usepackage{enumitem,verbatim}
\usepackage{microtype}
\allowdisplaybreaks

\usepackage{amsmath,amssymb,amsthm,mathtools}
\usepackage{graphicx}
\usepackage{tikz, tikz-cd}
\usetikzlibrary{decorations.markings, arrows.meta}
\tikzset{
    partial ellipse/.style args={#1:#2:#3}{
        insert path={+ (#1:#3) arc (#1:#2:#3)}
    }
}
\usetikzlibrary{decorations.pathmorphing}
\tikzset{snake it/.style={decorate, decoration=snake}}

\usepackage[hyphens]{url}
\usepackage[pdfusetitle]{hyperref}
\usepackage[nameinlink]{cleveref}
\usepackage{color}
\hypersetup{colorlinks,linkcolor=blue,citecolor=cyan}
\usepackage{physics}
\usepackage{bbm}
\usepackage{float}

\usepackage{alphabeta}
\newcommand{\mathsfgreek}[1]{\text{\sffamily#1}}

\newtheorem{thm}{Theorem}[section]
\newtheorem{mainthm}{Theorem}[section]

\newtheorem{cor}[thm]{Corollary}
\newtheorem{lem}[thm]{Lemma}
\newtheorem{prop}[thm]{Proposition}
\newtheorem{conj}[thm]{Conjecture}

\theoremstyle{definition}
\newtheorem{defn}[thm]{Definition}
\newtheorem{eg}[thm]{Example}

\newtheorem{qn}[thm]{Question}
\newtheorem{rmk}[thm]{Remark}

\DeclareMathOperator{\Sk}{\mathrm{Sk}}

\DeclareMathOperator{\SkCat}{\mathrm{SkCat}}

\newcommand{\qbin}[2]{\begin{bmatrix}{#1}\\ {#2}\end{bmatrix}}

\DeclareMathOperator{\KH}{\mathcal{H}}

\definecolor{darkgreen}{rgb}{0,0.7,0}

\title{Flow loops and quantum groups}
\author{Sunghyuk Park}
\begin{document}

\maketitle

\begin{abstract}
This paper connects two seemingly different ways of studying knots: quantum group invariants and the dynamics of Morse flows. 
For fibered knots, we define a two-variable series invariant by counting Morse flow loops in the complement. 
This dynamical series is conjectured to agree with the BPS $q$-series of the knot complement, which arises from Verma modules for quantum groups and encodes all colored Jones polynomials. 
We prove this correspondence for all braid-homogeneous knots. 
\end{abstract}

\tableofcontents

\newpage
\section{Introduction}
\addtocontents{toc}{\protect\setcounter{tocdepth}{1}}
Dynamics is a subject with a long history, going back at least to Newtonian mechanics. 
It studies the long-term behavior of systems that evolve over time, such as the famous Lorenz system \cite{Lorenz} (Figure \ref{fig:Lorenz_attractor}), which is an example of a three-dimensional flow.  
\begin{figure}
    \centering
    \includegraphics[width=0.3\linewidth]{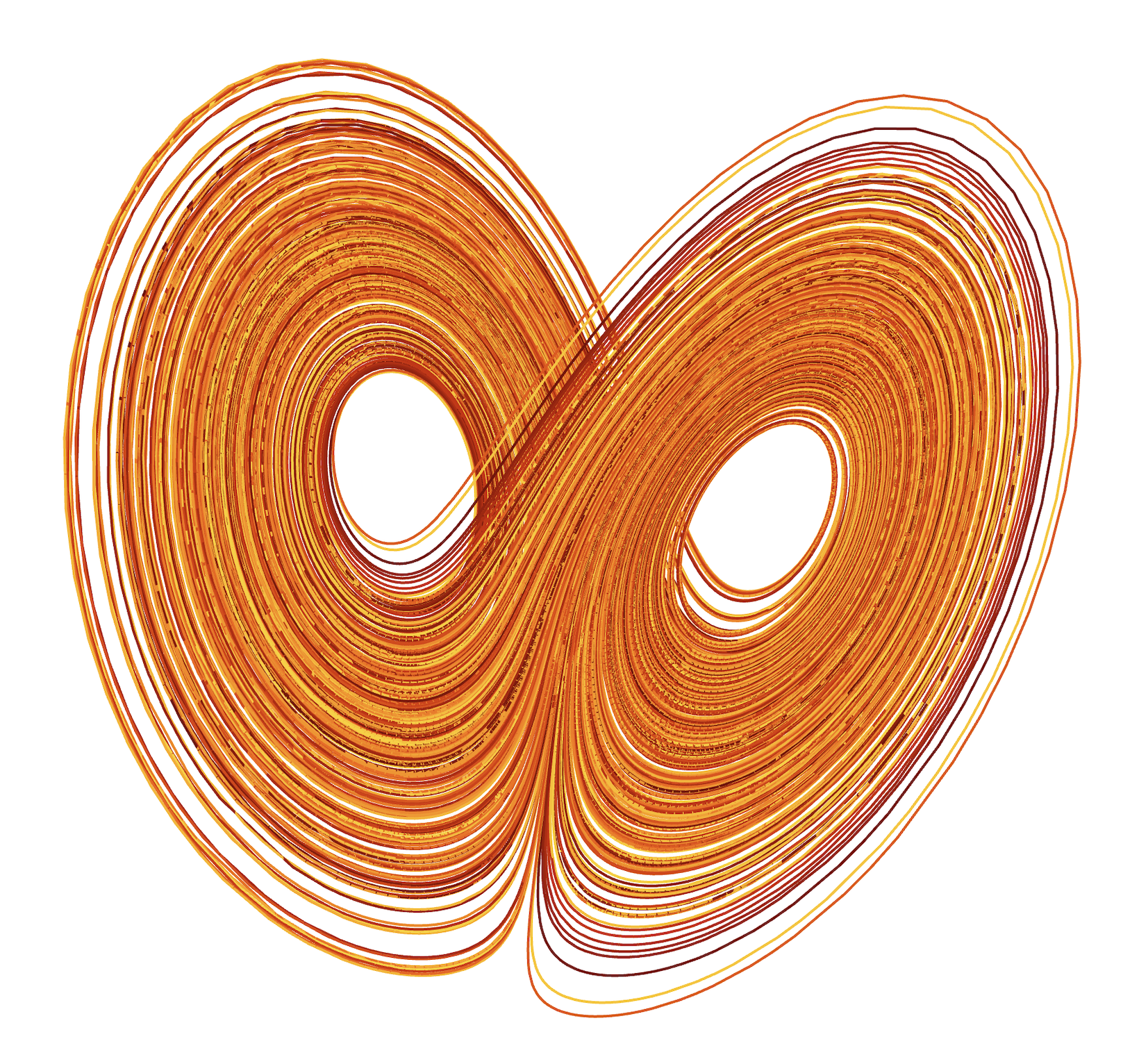}
    \caption{Lorenz system is a system of ODEs determining a three-dimensional flow. 
    Every solution to the system eventually gets attracted to the set shown, called the Lorenz attractor.}
    \label{fig:Lorenz_attractor}
\end{figure} 
The dynamics of three-dimensional flows are particularly interesting, since their periodic orbits can be knotted in interesting ways. 
Birman and Williams \cite{BirmanWilliams} showed that 
such knotted periodic orbits can be completely analyzed using certain branched surfaces known as \emph{knot holders} (a.k.a.~templates \cite{GhristHolmesSullivan}), which ``hold'' the knotted periodic orbits; the dynamics of three-dimensional flows can be encoded into the symbolic dynamics on knot holders. 

Quantum topology, in contrast, studies knots and low-dimensional manifolds through invariants that are usually constructed using representation-theoretic tools. 
It emerged in the late 1980s, largely driven by the discovery of the Jones polynomial \cite{Jones} and its interpretation and generalization in terms of representation theory of quantum groups \cite{ReshetikhinTuraev}. 
These representation-theoretic definitions of link polynomials are \emph{diagrammatic} in that one associates a polynomial to each link diagram and then checks the invariance under Reidemeister moves. 

In his lecture at the Hermann Weyl Symposium in 1987, Atiyah famously posed the following question:
\begin{qn}[\cite{Atiyah}]\label{qn:Atiyah}
Is there an intrinsically three-dimensional definition of the Jones polynomial?
\end{qn}
Witten later gave two remarkable answers to this question. 
First, in \cite{Witten_Jones}, he interpreted the Jones polynomial in terms of the 3-dimensional topological quantum field theory now known as Chern--Simons theory, where it arises as the partition function of $S^3$ in the presence of a Wilson line defect supported on the link. 
Second, in \cite{Witten_fivebranes}, he proposed a gauge-theoretic interpretation in terms of counting solutions to the Kapustin--Witten equations on $S^3\times[0,\infty)$ with boundary conditions determined by the link. 

These insights are among the most beautiful and influential in quantum topology and mathematical physics. 
At the same time, from the perspective of obtaining a direct and computable mathematical definition of the Jones polynomial, substantial difficulties remain. 
In Chern--Simons theory, the partition function is formulated in terms of path integrals, for which a direct mathematical definition is still unavailable.\footnote{There is a recent alternative approach using factorization homology; see \cite{CostelloFrancisGwilliam}.} 
On the gauge-theoretic side, no rigorous mathematical definition is currently available for the count of solutions to the Kapustin--Witten equations, even for the unknot, and substantial analytical difficulties, including compactness, remain. 
This motivates the search for another answer to Question \ref{qn:Atiyah}, one that is both intrinsically three-dimensional and computable. 

The purpose of this article is to give such a \emph{geometric} definition of quantum invariants for fibered links, using ideas from dynamics. 
This new bridge is motivated by ideas from topological string theory. 
Witten \cite{Witten_CS-string} showed that Chern--Simons theory can be embedded in topological string theory, and Ooguri and Vafa \cite{OoguriVafa} developed this picture further, predicting a relationship between quantum link invariants and open Gromov--Witten invariants. 
A rigorous mathematical formulation of this picture emerged only much later, when Ekholm and Shende \cite{EkholmShende} defined deformation-invariant counts of open holomorphic curves via skein-valued curve counting, culminating in a proof of the Ooguri--Vafa conjecture. 
This development is a crucial ingredient for the present work, but it does not by itself provide an independent way to compute quantum link invariants, since in practice the evaluation of these curve counts still relies on prior knowledge of the corresponding colored HOMFLYPT polynomials. 

In \cite{EGGKPS}, we proposed a related picture for knot complements, predicting that the corresponding open Gromov--Witten invariants recover the \emph{BPS \(q\)-series} (a.k.a.~the $\widehat{Z}$-invariant) for the knot complement \cite{GukovManolescu, Park_large-color, Park_inverted}, which in turn encodes all the colored Jones polynomials of the knot. 
For fibered links, this picture becomes especially concrete. 
In that case, the knot complement Lagrangian $M_K \subset T^*S^3$, diffeomorphic to \(S^3 \setminus K\), can be shifted off the zero section $S^3 \subset T^*S^3$, so that the skein-valued count of holomorphic curves in $(T^*S^3, S^3 \sqcup M_K)$ gives a precise definition of these open Gromov--Witten invariants. 
In the Morse flow graph limit \cite{Ekholm_Morse-flow-tree} (see also \cite[Appendix A]{EkholmLonghiParkShende}), these holomorphic curves degenerate to Morse flow graphs, which in the present setting are Morse flow loops, i.e., periodic orbits of the Morse flow \cite{ChauhanEkholmLonghi}. 
What makes this picture computable is that the relevant \(\mathrm{GL}_2\)-quantum invariants are recovered from the \(\mathrm{GL}_1\)-skein-valued count of these flow loops, which depends only on their linking numbers. 
This is where ideas from dynamics enter, through tools such as knot holders.

\subsection*{Summary of main results}

We first define an invariant of fibered knots coming from the $\mathrm{GL}_1$-skein-valued flow loop counts:
\begin{mainthm}\label{mainthm:flow-loop-count}
For any fibered knot $K$, the $\mathrm{GL}_1$-skein-valued flow loop count
\[
\Phi_{S^3 \setminus K} (x,q) \in \mathbb{Z}[q^{\pm 1}][[x]]
\]
is a well-defined knot invariant. 
\end{mainthm}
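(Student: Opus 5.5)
The proof goes in three steps. First, make the count concrete in terms of the monodromy. Then show it depends only on the isotopy class of the monodromy. Finally, show it does not depend on the remaining choices in the construction.

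\emph{Step 1: a dynamical formula.} Since $K$ is fibered, $S^3\setminus K$ is the mapping torus of a monodromy $\varphi\colon F\to F$ on the fiber surface $F$. The Lagrangian $M_K$ is shifted off the zero section by the graph of $df$ for a closed $1$-form close to $d\theta$, with $\theta$ the fibration angle. The Morse flow is then a suspension flow transverse to the fibers, and its first-return map is isotopic to $\varphi$. A flow loop crossing the fiber $n$ times corresponds to a periodic orbit of period $n$ of the return map.

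The $\mathrm{GL}_1$-skein weight of a multi-loop configuration depends only on two things: the homology classes, which give the power of $x$ via intersection with the fiber, and the pairwise linking and self-framing numbers, which give the power of $q$. So I would define
\[
\Phi_{S^3\setminus K}(x,q)=\sum_{\text{configurations}} \epsilon\, x^{n}\, q^{\mathrm{lk}},
\]
where the sum runs over unordered collections of flow loops (with multiple covers), and $\epsilon$ is the Lefschetz sign. For a nondegenerate return map there are only finitely many periodic orbits of each period. Hence each coefficient of $x^n$ is a finite Laurent polynomial in $q$, which places the count in $\mathbb{Z}[q^{\pm1}][[x]]$.

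\emph{Step 2: invariance under isotopy of the return map.} A fibered knot determines its fibration up to isotopy, so the monodromy is determined up to isotopy and conjugacy. It therefore suffices to show $\Phi$ is unchanged under a generic one-parameter family of return maps $\varphi_t$. In such a family, periodic orbits of period $n$ undergo only two kinds of bifurcation:
\begin{itemize}
\item saddle-node births and deaths of two orbits with opposite Lefschetz sign;
\item period-doubling bifurcations.
\end{itemize}
Away from bifurcation times, orbits move by isotopy in the complement of $K$, and the orbits of lower period are disjoint from them. Linking numbers between different orbits are therefore constant, and so are their framings, measured relative to the fiber framing. The skein-valued weight is constant as well.

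At a saddle-node, the two orbits are isotopic to each other and have the same linking with all other orbits. Their contributions cancel at the level of the generating function; this is where the $\mathrm{GL}_1$ skein relation $q^{\mathrm{lk}}$ factorizes over the components. At a period-doubling, I would check the local model: a doubled orbit is a $(2,1)$-cable of the original orbit. The multiple-cover contribution of the original orbit must compensate the doubled orbit, exactly as in Ekholm--Shende's analysis of $\mathrm{GL}_1$ skein multiple covers. This is the Ooguri--Vafa-type $q$-multiple-cover formula.

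\emph{Step 3: independence of auxiliary choices.} I would verify that the remaining data (metric, the exact perturbation, the shift size) affect only the dynamics near the binding. There, flow loops cannot be created because the flow near $K$ is a standard rotation; this gives compactness: no orbit escapes to $K$ along the family.

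\emph{Main obstacle.} I expect the hardest part to be the period-doubling and multiple-cover bookkeeping in Step 2. The weights must be defined so that a cover of degree $k$ contributes exactly what cancels the new orbit's $q$-framing term. I would first try to reduce this to the known invariance of Ekholm--Shende skein-valued counts, via the Morse flow graph limit, taking the flow-loop count as that limit. If that is unavailable, I would do a direct local computation in a solid-torus neighborhood of the bifurcating orbit, where the contribution is a $\mathrm{GL}_1$ skein element of the solid torus that can be computed explicitly.
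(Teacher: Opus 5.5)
Your outline has the same skeleton as the paper's proof. You connect any two choices by a generic family, invoke Brunovsky's classification into saddle-node and period-doubling bifurcations, localize near the bifurcating orbit, and check a local identity. But it leaves out exactly the ingredients that make the local checks true.

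First, the count is never actually defined. Invariance hinges on the multiplicity rule. In the paper, elliptic orbits appear at most once and hyperbolic orbits with arbitrary multiplicity (a $d$-fold cover realized as $d$ parallel push-offs), with sign $(-1)^{\mathrm{e}(\gamma)+\mathrm{nh}(\gamma)}$. At $q=1$ this recovers $\prod_\gamma w_\gamma=(1-x)/\Delta_K(x)$, the inverse Lefschetz zeta function. Summing over unordered collections of possibly multiply covered orbits weighted by Lefschetz signs, read literally, fails already at $q=1$. Take a period-one saddle-node pair: a positive hyperbolic orbit, all of whose iterates have index $-1$, and an elliptic one, all of whose iterates have index $+1$. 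Your rule gives $\prod_{k\geq1}(1-x^{2k})^{-1}\neq 1$.

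Second, you never specify the framing, so the power of $q$ is undefined. The paper frames flow loops by a perturbation $\xi_{\mathrm{pert}}$ tangent to the fibers with $\partial F$ a sink. Such a field must vanish along a link $\ell_{\mathrm{fr}}$, since its fiberwise indices sum to $\chi(F)=1-2g\neq 0$. Flow loops cross $\ell_{\mathrm{fr}}$ during a deformation, and their framings then jump by $\pm1$. This is why the count lives in the skein module twisted by framing lines and is converted to a series via the exponent $\mathrm{lk}(L,L)-\mathrm{lk}(L,\ell_{\mathrm{fr}})$. Your claim that framings ``relative to the fiber framing'' stay constant along isotopies ignores this. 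Independence of the framing data is also absent from your Step~3; in the paper it comes from the $2$-chain swept out by $\ell_{\mathrm{fr}}$ together with the framing-line relation. Step~3's assertion that the metric only affects dynamics near $K$ is also false, though harmless, since Step~2 covers any isotopy of the return map.

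Third, the two local verifications are asserted rather than carried out, and the period-doubling one is misdescribed. With the correct weights and a common local framing $f$, the saddle-node case is the telescoping identity $\sum_{n_{h_+}\geq 0,\,0\leq n_e\leq 1} q^{f(n_{h_+}+n_e)^2}x^{n_{h_+}}(-x)^{n_e}=1$. This identity, not the mere isotopy of the two orbits, produces the cancellation, and it uses that the elliptic orbit is counted at most once while the hyperbolic one is counted with all multiplicities.

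In a period doubling, a negative hyperbolic orbit becomes an elliptic orbit together with a positive hyperbolic orbit of twice the period (or the analogous version starting from an elliptic orbit). Invariance is then just the reindexing $n_{h_-}=2n_{h_+}+n_e$ in $\sum_{n_{h_-}\geq 0} q^{f n_{h_-}^2}(-x)^{n_{h_-}}$, with $f$ half-integral. Your description omits the new elliptic orbit, which is what absorbs the odd covers. No Ooguri--Vafa multiple-cover formula enters, because in the $\mathrm{GL}_1$ skein multiple covers are simply parallel copies.

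Your fallback of deducing invariance from Ekholm--Shende via the Morse flow graph limit is a genuinely different route, which the paper mentions only as a remark. It needs the holomorphic-curve/flow-loop correspondence as analytic input, and avoiding that is the point of the paper's elementary bifurcation argument.
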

\begin{rmk}\label{rmk:fibered_link}
While we mainly work with fibered knots for simplicity, this invariant can be straightforwardly extended to fibered links, and even fibered $3$-manifolds. 
If $K$ is a fibered link with $s$ components, then the $\mathrm{GL}_1$-skein-valued flow loop count takes a value in the completion of $\mathbb{Z}[q^{\pm 1}][x_1^{\pm 1}, \cdots, x_s^{\pm 1}]$ with respect to the total $x$-degree, where $x_1, \cdots, x_s$ represent the homology classes in $H_1(S^3 \setminus K)$ corresponding to the positive meridians. 
See Appendix~\ref{sec:fibered_cone} for further discussions. 
\end{rmk}

While this invariant is essentially the $\mathrm{GL}_1$-reduction of the skein-valued flow loop count considered in \cite{ChauhanEkholmLonghi}, we provide an elementary proof of deformation invariance (independent of the skein-valued curve count of \cite{EkholmShende}) via bifurcation analysis. 

This $\mathrm{GL}_1$-skein-valued flow loop count is a mathematically precise version of the open Gromov-Witten invariants considered in \cite{EGGKPS}. 
In particular, the conjecture of \cite{EGGKPS} can be reformulated precisely as follows:
\begin{conj}\label{conj:flow-equals-quantum}
For any fibered link $K$, 
\[
\Phi_{S^3 \setminus K} \doteq \widehat{Z}_{S^3 \setminus K},
\]
where $\doteq$ denotes equality up to an overall monomial factor of the form $\pm q^{\cdots} x^{\cdots}$.\footnote{Equivalently, since the left-hand side always starts with $1 + \cdots$, we may normalize the right-hand side in the same way by factoring out the leading monomial; see the end of Section \ref{sec:BPS-q-series-from-flow-loops} for the precise monomial. } 
\end{conj}
Here, $\widehat{Z}_{S^3 \setminus K}$ on the right-hand side denotes the BPS $q$-series of $S^3 \setminus K$. 
The mathematical definition of the BPS $q$-series is currently available for all braid-homogeneous links \cite{Park_inverted}, or more generally, for all links admitting ``nice'' diagrams \cite{Park_thesis}.\footnote{While it is not known whether all fibered links are nice, all fibered knots up to 12 crossings are known to be nice \cite{OSSS}.}
In the case of fibered links for which we do not currently have an independent definition of the right-hand side, the conjecture should be understood as saying that the flow loop count provides a resummation of the Melvin-Morton-Rozansky (MMR) expansion \cite{MelvinMorton, BarNatanGaroufalidis, Rozansky, Rozansky_RC} of the colored Jones polynomials of $K$, in the sense of \cite[Conj. 1.5]{GukovManolescu}. 
In particular, Conjecture \ref{conj:flow-equals-quantum} would imply the following finiteness conjecture for MMR expansions of fibered knots: 
\begin{conj}[\cite{Park_inverted}]
For any fibered knot $K$, the coefficients of its MMR expansion
\[
\frac{1}{[n]} J_{K, n}(q) \bigg\vert_{q=e^{\hbar}} 
\quad
\underset{x:= e^u}{
\overset{\substack{\hbar \rightarrow 0,\;n \rightarrow \infty \\ \text{while } u := n\hbar \text{ fixed}}}{\sim}
}
\quad
\frac{1}{\Delta_K(x)} + \sum_{d\geq 1} \frac{P_d(x)}{\Delta_K(x)^{2d+1}} \hbar^d
,
\quad
P_d(x) \in \mathbb{Q}[x^{\pm 1}],
\]
when expanded into a power series in $x$, which are a priori just in $\mathbb{Q}[[\hbar]]$, can be resummed into uniquely determined Laurent polynomials in $\mathbb{Z}[q^{\pm 1}]$. 
\end{conj}

Our main theorem settles Conjecture \ref{conj:flow-equals-quantum} for all braid-homogeneous knots:
\begin{mainthm}\label{mainthm:flow-equals-quantum}
For any braid-homogeneous knot, Conjecture \ref{conj:flow-equals-quantum} is true. 
\end{mainthm}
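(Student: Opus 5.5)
The plan is to compute both sides explicitly for a braid-homogeneous knot $K$, written as the closure of a homogeneous braid $\beta$ in which each generator $\sigma_i$ appears with a single sign $\epsilon_i$, and then match the two expressions term by term. By Stallings, such a closure is fibered. Its fiber is the Seifert surface obtained from the braid's Seifert circles (disks) joined by twisted bands, one for each crossing. The monodromy is then a composition of Dehn twists along core curves running between adjacent bands, positive or negative according to $\epsilon_i$.

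\textbf{Step 1: the flow loop side.} Using Theorem~\ref{mainthm:flow-loop-count}, we may compute $\Phi_{S^3\setminus K}$ with any convenient fibration and Morse data. I would choose a circle-valued Morse function built from the Stallings decomposition: the flow on each fiber is a gradient-like flow adapted to the disk-and-band structure, and the return map is a product of Dehn twists. The periodic orbits of the suspension flow then lie on a knot holder (template) read off from $\beta$. For each generator there is a strip, and the orbits correspond to cyclic words recording which bands are traversed. Under this identification, the $\mathrm{GL}_1$-skein weight of a multi-loop $\gamma$ is a sign times $x^{\text{(meridian degree)}}$ times $q^{\text{(self/mutual linking)}}$. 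These quantities are combinatorial in the word: the meridian degree counts passes through the fiber, and the linking numbers count crossings of strands on the template, with signs $\epsilon_i$. So $\Phi$ becomes a state sum over words, that is, a sum over integer occupation numbers on the bands with quadratic $q$-exponents. The resulting expression has the shape of a product of $q$-Pochhammer-type factors, one per strip.

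\textbf{Step 2: the quantum side.} Next I would recall the definition of $\widehat Z_{S^3\setminus K}$ from \cite{Park_inverted} for braid-homogeneous links. It is an R-matrix state sum over the braid diagram, using inverted Verma modules, where the positive and negative crossings give the two homogeneous R-matrices. Closing the braid and taking a partial trace produces a multiple sum over colorings of the strands, with coefficients given by $q$-binomials and quadratic $q$-powers, which converges in $\mathbb{Z}[q^{\pm1}][[x]]$. The key step is to rewrite each crossing's R-matrix entry via the $q$-binomial theorem as a generating function over paths on the corresponding template strip. Homogeneity is what makes this work: each strip carries only one sign, so the expansions are genuine power series with no cancellation issues. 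After this rewriting, the trace over the closed braid becomes a transfer-matrix computation. Its terms should then be in bijection with the cyclic words of Step 1, with equal weights. The overall monomial is fixed by comparing leading terms, since $\Phi$ begins with $1+\cdots$.

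\textbf{Main obstacle.} The hardest part will be matching signs and $q$-exponents, specifically the quadratic linking-number form on the template versus the framing and quadratic factors coming from the R-matrices and the partial trace. There is also the question of multiply-covered and non-primitive orbits: the orbit count involves symmetric powers, and the $\mathrm{GL}_1$-skein treats multiple covers in a specific way. My first attempt would be to work at the level of plethystic logarithms, or equivalently to use the fermionic/bosonic statistics determined by the Lefschetz sign of each orbit, so that the determinant-like factors match the $q$-Pochhammer symbols. I would verify this on the trefoil and figure-eight, where both sides are known explicitly, before doing the general combinatorial induction on the braid word.
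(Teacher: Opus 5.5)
There is a genuine gap, and it sits where you say the terms ``should then be in bijection'' with the template states. The two state sums have different state spaces. Verma states label the $n$ strands, with $\binom{m+n-1}{m}$ states in weight $m$. Template states on the braid knot holder $\mathcal{H}_\beta$ label the $n-1$ arcs between adjacent punctures, with $\binom{m+n-2}{m}$ states. At $\sigma_i$ the template transfer matrix mixes the three labels $a_{i-1},a_i,a_{i+1}$, whereas the R-matrix mixes two strand labels. So no crossing-by-crossing $q$-binomial rewriting can produce a weight-preserving bijection; the relation between the two is a nonlocal change of basis.

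The paper proceeds as follows. The flow-line count on $\mathcal{H}_\beta$ \emph{is} the Lawrence representation $L_{n,m}$ in Martel's code-sequence basis. Kohno's theorem identifies $L_{n,m}$ with the null vectors $\ker E$ in $(V_\infty^{\otimes n})_m$, and the decomposition $\bigoplus_k F^{m-k}N_{n,k}$ supplies a factor $1/(1-z)$. Feeding this into the Verma-trace formula for the MMR expansion (Ito), the two terms combine into $\sum_m (1-q^{2m+(n-1)}x^n)\,q^{-m}\,\mathrm{Tr}_{V_{n,m}}(\beta)$. The factor $1-q^{2m+(n-1)}x^n$ is exactly the elliptic orbit along the braid axis, which your plan never produces on the quantum side. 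Even for positive braids, equality with $\widehat{Z}$ then comes from uniqueness of resummation: a series in $\mathbb{Z}[q^{\pm1}][[x]]$ is determined by its $\hbar$-expansion. It does not come from a termwise match.

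For negative crossings, ``homogeneity means no cancellation'' is not the mechanism. The template $\mathcal{H}_\beta$ that carries the Lawrence/Verma structure is not the knot holder $\mathcal{H}_K$ of the fibration once negative crossings occur, and its state sum exists only perturbatively, in $\mathbb{Q}(x^{1/2})[[\hbar]]$. Three missing steps are needed.
\begin{enumerate}
\item Invert the states on the negative columns of $\mathcal{H}_\beta$, assigning strictly negative labels, and prove this changes the $\hbar$-expansion only by $(-1)^{\mathrm{col}_-}$. Rozansky's parametrized state sums reduce this to a determinant identity for $\det(I-\mathcal{B})$ (MacMahon/Foata--Zeilberger). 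That identity is proved by swapping used and unused strips in a column, which gives a bijection of simple multi-cycles.
\item The inverted sum now converges, so uniqueness of resummation identifies it with $\widehat{Z}$.
\item Check a local identity: the $q^{-1}$-trinomial weight evaluated at labels $-1-b,\,-1-b'$ equals the $\mathcal{H}_K$ weight times $-x\,q^{(b'-b)/2}$. Along a column this yields one factor $-x$ per negative crossing, and comparing linking with the braid axis yields one $q$ per negative column.
\end{enumerate}
None of Kohno, the MMR/uniqueness argument, or the inversion lemma appears in your proposal, and they are the substance of the proof.

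Two smaller points. Multiple covers need no plethystic treatment: with the framing adapted to the knot holder, a $d$-fold cover is $d$ parallel strands, so the occupation-number state sum already counts them. Also, that state sum is not a product of Pochhammer factors; it carries $q$-binomials at the joining charts.
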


Since $\widehat{Z}_{S^3 \setminus K}$ encodes all the colored Jones polynomials of $K$ via MMR expansion, it immediately implies that the flow loop count $\Phi_{S^3 \setminus K}$ encodes all the colored Jones polynomials of $K$, hence giving an intrinsically three-dimensional geometric definition of these quantum link invariants. 

While $\Phi_{S^3 \setminus K}$ counts flow loops in the complement $S^3 \setminus K$, the BPS $q$-series $\widehat{Z}_{S^3 \setminus K}$ is usually defined in terms of a state sum on a link diagram of $K$, using Verma modules of the quantum group $U_q(\mathfrak{sl}_2)$ \cite{Park_large-color, Park_inverted}. 
As an intermediary step toward proving Theorem \ref{mainthm:flow-equals-quantum}, we show that braid group representations typically constructed using quantum groups can be recovered from these flow loop counts: 
\begin{mainthm}\label{mainthm:Lawrence}
The braid group representation on tensor powers of Verma modules can be obtained by counting flow loops. 
\end{mainthm}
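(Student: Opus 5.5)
The plan is to realize the Verma-module braid representation as a Lawrence-type homological representation, and then to compute the entries of the latter by counting flow loops. Concretely, let $D_n$ be the $n$-punctured disk and $\beta\in B_n$. The mapping torus of $\beta$, with the boundary circle collapsed, is the complement of the closed braid $\hat\beta$ together with its braid axis. This space fibers over $S^1$ with fiber $D_n$, so it carries a Morse flow transverse to the fibers, whose return map is a representative of $\beta$. I will choose the representative carefully: a product of half-twist diffeomorphisms, each supported near an arc joining adjacent punctures. For this choice, the periodic orbits can be organized by a knot holder (template) built from the braid word. This is the dynamical input I plan to use.

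\textbf{Step 1: generating functions for the return map.} For each multi-index $(m_1,\dots,m_n)$ with $\sum m_i=m$, I will define a generating function of flow trajectories. These trajectories start on a chosen basis of $m$-tuples of arcs (forks) in $\mathrm{Sym}^m D_n$ and end on the dual basis. Each trajectory is weighted by its $\mathrm{GL}_1$-skein weight, i.e.\ by $q$ raised to the appropriate linking and self-crossing numbers, and by $x_i$-monomials recording winding around the punctures. This is the open-path version of the flow loop count. Concatenating these open paths corresponds to composing braids, so the assignment is multiplicative in $\beta$. Closing up a path, i.e.\ taking a trace, recovers the loop count $\Phi$ of the mapping torus. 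It is therefore natural to define the matrix entries this way and to check that they give a representation of $B_n$. For this I will rely on the bifurcation-analysis invariance argument behind Theorem~\ref{mainthm:flow-loop-count}, applied in relative form, to show independence from the choices.

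\textbf{Step 2: computation on generators.} For a single half-twist $\sigma_i$, the return map is supported in a neighbourhood of two punctures, so only finitely many local trajectory types occur. On the fork basis these give an explicit matrix whose entries are $q$-binomial-type sums. The binomials arise from counting how $k$ of the $m_i$ strands hop across the twist, with the $q$-power given by the GL$_1$ crossing weights. I will then compare this matrix with the $R$-matrix of $U_q(\mathfrak{sl}_2)$ acting on $V_{\lambda_i}\otimes V_{\lambda_{i+1}}$ (Verma modules, with $x_i=q^{\lambda_i}$), restricted to weight spaces of weight $\sum\lambda_i-2m$. The comparison uses the standard identification of the $m$-th weight space with the Lawrence representation on $H_m$ of the configuration space (Kohno, Martel, Jackson--Kerler). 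Matching the two matrices should reduce to an elementary $q$-binomial identity. Sign and monomial normalizations can be absorbed into a diagonal change of basis.

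\textbf{Step 3: the main obstacle.} I expect the main difficulty to be in Step 1, namely proving that the open flow-path counts compose correctly. Near a concatenation point, broken trajectories could be glued in several ways, and one must show that the skein weights multiply with no correction terms. The plan is to choose the flow so that each half-twist is realized inside its own time slab of the mapping torus, and so that trajectories meet fiber boundaries transversely at the fork endpoints. Gluing is then a bijection by construction, and the only remaining bookkeeping is that linking numbers add. If that fails, the fallback is to rely only on the loop-level invariance of Theorem~\ref{mainthm:flow-loop-count}. One would compare traces of products of all braids, which detects the representation up to conjugacy, and then pin down the basis using the generator computation of Step 2.
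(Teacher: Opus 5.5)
\textbf{There is a genuine gap in Step~1, at exactly the point you flag, and your proposed remedy does not close it.}

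A trajectory of the suspension flow that starts at a point $p$ of a fork $A_i$ in one fiber arrives at $\beta(p)$ in the next. So your matrix entry counts weighted points of $\beta(A_i)\cap B_j$ in $\mathrm{Sym}^m D_n$; that is, it is Bigelow's intersection pairing $\langle \beta(A_i),B_j\rangle$. Trajectories through the first time slab end on dual forks, while trajectories through the second slab must start on forks. So there is nothing to glue, and putting each half-twist in its own slab does not change this.

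The composition law $\langle\psi\varphi(A_i),B_k\rangle=\sum_j\langle\varphi(A_i),B_j\rangle\langle\psi(A_j),B_k\rangle$ is the homological fact that $[\varphi(A_i)]=\sum_j\langle\varphi(A_i),B_j\rangle[A_j]$ in the twisted Borel--Moore homology of the configuration space, with forks and dual forks perfectly paired. It does not follow from concatenating flow lines. Nor does it follow from the bifurcation analysis behind Theorem~\ref{mainthm:flow-loop-count}, which concerns closed orbits only.

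Your fallback is circular. Comparing traces over all braids requires already knowing the loop counts for every $\beta$. And ``pinning down the basis from the generators'' presupposes a representation on the flow side. If you instead import the representation property from homology, Step~1 becomes a reinterpretation of Bigelow's intersection formula. The remaining work is then to show that your $\mathrm{GL}_1$-skein weights agree with the local-system weights. Separately, $B_n$ acts on a fixed space only when all highest weights coincide, $x_i=x$.

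\textbf{How the paper makes composition automatic.} You mention the knot holder but never use it, and it is exactly what the paper uses. The marking consists of the arcs $\alpha_1,\dots,\alpha_{n-1}$ between adjacent punctures. The splitting/joining movie isotopes each $\sigma_i(\alpha_j)$ onto parallel copies of the $\alpha_k$, so the branch lines of the holder are the arcs themselves. A state is then an occupation vector $(a_1,\dots,a_{n-1})$ at both ends of a slab. Each slab contributes a transfer matrix built from the local factors: $q$-multinomials at joining charts, half-twist factors, and linking with $\gamma_h$ and $\ell_{\mathrm{pair}}$. Stacking slabs multiplies these matrices (Proposition~\ref{prop:braid-group-rep}, a special case of the functorial construction of Section~\ref{subsec:skein-category}).

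The paper then proceeds in three steps:
\begin{itemize}
\item It matches these matrices with Martel's formula for the Lawrence representation $L_{n,m}$ in the standard code sequence basis (Proposition~\ref{prop:equivalence-to-Lawrence}).
\item It uses Kohno's theorem to identify $L_{n,m}$ with the null vectors $N_{n,m}\subset(V_\infty(x)^{\otimes n})_m$.
\item It recovers the full weight spaces from $(V_\infty(x)^{\otimes n})_m\cong\bigoplus_{k\le m}N_{n,k}$, recorded as Theorem~\ref{thm:Kohno}.
\end{itemize}
Note that the flow-line pieces are $\binom{m+n-2}{m}$-dimensional and indexed by $(n-1)$-tuples, not by the $n$-tuples of your fork basis. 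Aiming directly at the full weight space, as you do, would require a relative (Martel-type) model. You would also need a separate check that the flow-line weights reproduce that model.
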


\subsection*{Organization of the paper}
In Section \ref{sec:skein-valued-flow-loop-count}, we define the skein-valued count of flow loops in fibered 3-manifolds and prove Theorem \ref{mainthm:flow-loop-count}. 
In Section \ref{sec:knot-holder-state-sum}, we explain how to actually count such flow loops in practice, using knot holders; 
this leads to a geometric state sum model on knot holders. 
In Section \ref{sec:braid-group-repn}, we describe the braid group representation induced from these flow loop counts. 
We then prove Theorem \ref{mainthm:Lawrence}, showing that this braid group representation is equivalent to the one coming from tensor products of Verma modules. 
Finally, in Section \ref{sec:BPS-q-series-from-flow-loops}, we prove our main theorem (Theorem \ref{mainthm:flow-equals-quantum}) showing that the flow loop count is equal to the BPS $q$-series, for all braid-homogeneous knots. 
Appendix~\ref{sec:fibered_cone} discusses how the skein-valued flow loop count gives an invariant of fibered cones, which may be of independent interest.

\subsection*{Future directions}
We conclude the introduction by outlining some directions of research we plan to pursue in future work. 
\begin{itemize}
\item The BPS $q$-series $\widehat{Z}$, as originally predicted from physics \cite{GukovPutrovVafa, GukovPeiPutrovVafa, GukovManolescu}, is expected to admit a categorification. 
The geometric definition of the BPS $q$-series in terms of flow loop count provides a promising approach to this problem: it suggests that there should be a Floer-type theory whose generators are these flow loops. 
It would be very interesting to construct such a homology theory. 

\item We can also add extra Wilson line defects along some link $L\subset S^3 \setminus K$, as in Theorem \ref{thm:abelianization}. 
The resulting two-variable series $\widehat{Z}_{S^3 \setminus K,\, L} \in \mathbb{Z}[q^{\pm 1}]((x))$ then counts not only flow loops but also flow lines starting and ending on $L$. 
This is the analog of the Jones polynomial of $L$, in $S^3 \setminus K$ instead of $S^3$. 
A categorification of such a two-variable series would be the analog of Khovanov homology \cite{Khovanov} for links in $S^3 \setminus K$. 
When $K$ is the unknot, this should specialize to the annular Khovanov homology (a.k.a.~annular APS homology) \cite{AsaedaPrzytyckiSikora}. 

\item Extending down, there should be a dg- (or $A_\infty$-) category associated to marked fiber surfaces\footnote{Categorifying the object $Z(F)$ of the skein category discussed in Section \ref{subsec:skein-category}.}, so that the categorical trace (i.e., Hochschild homology) of the monodromy autofunctor should recover the conjectural homology theory in the first bullet point above. 
It would be interesting to see if there is any connection to the one proposed in \cite{ColinHondaTian}. 

\item It would also be interesting to extend the results of this paper to non-fibered knots. 
In case of non-fibered knots, the circle-valued Morse function will necessarily have some critical points, so we need to count not just flow loops but also flow lines between those critical points. 
In terms of Lagrangians, this would mean shifting the knot complement Lagrangian $M_K$ so that it intersects the zero section $S^3 \subset T^*S^3$ transversely at finitely many points; see \cite[Sec. 8.4]{DiogoEkholm}. 
The count of holomorphic curves in $(T^*S^3, S^3 \cup M_K)$ should then take a value in the ``skein module of the intersecting Lagrangians $S^3 \cup M_K$.'' 
Understanding such a skein module and identifying it with an appropriate two-variable power series ring would be the key steps in this direction. 

\end{itemize}

\subsection*{Acknowledgements}
I am indebted to Tobias Ekholm and Sergei Gukov for numerous inspiring conversations and discussions over the years. 

This work was supported in part by Simons Foundation through Simons Collaboration on Global Categorical Symmetries.

\addtocontents{toc}{\protect\setcounter{tocdepth}{2}}
\section{Skein-valued count of flow loops}\label{sec:skein-valued-flow-loop-count}

In this section, we define the main object of this paper, the $\mathrm{GL}_1$-skein-valued count of flow loops. 
This section is highly inspired by a recent work \cite{ChauhanEkholmLonghi}, as our count is nothing but the $\mathrm{GL}_1$-reduction of the count considered in that paper. 

\subsection{General setup}\label{subsec:setup}
Let $K$ be a fibered knot, so that its complement admits a fibration $\pi : S^3 \setminus K \rightarrow S^1$, which is unique up to isotopy. 
Such a fibration can be viewed as a circle-valued Morse function on the knot complement. 
Accordingly, there is a nowhere-vanishing closed $1$-form $\pi^*(d\theta)$ on $S^3 \setminus K$. 
Once a metric on $S^3 \setminus K$ is chosen, the closed $1$-form can be dualized to a Morse flow vector field $\xi_{\mathrm{Mor}}$, generating a Morse flow. 
We assume that the knot $K$ is a sink of the Morse flow, by modifying the circle-valued Morse function near the boundary of $S^3 \setminus K$ if necessary. 
We also assume that all the periodic orbits of the Morse flow are nondegenerate (i.e., $1$ is not an eigenvalue of the linearized return maps), by making a generic perturbation of the metric if necessary. 

We are interested in counting periodic orbits of the Morse flow.\footnote{Equivalently, we are counting the periodic orbits of the suspension flow of the Poincar\'e return map of the fiber. 
Conversely, any suspension flow of a surface diffeomorphism arises as a Morse flow, if we choose $\pi$ as the circle-valued Morse function and choose a metric on the mapping torus which is fiberwise a product metric.}  
A compactness argument shows that, for each period (i.e., a positive class in $H_1(S^3 \setminus K) \cong \mathbb{Z}$), there are only finitely many periodic orbits of that period. 
Let $\gamma$ be a nondegenerate periodic orbit, and let $\phi_\gamma$ be the corresponding linearized return map, and $\lambda_1, \lambda_2$ its eigenvalues. 
Then, there are three possibilities:\footnote{Since $\det(I - \phi_\gamma) + \det(I + \phi_\gamma) = 2(1+\det \phi_\gamma) > 0$, $\det(I-\phi_\gamma)$ and $\det(I+\phi_\gamma)$ can't be both negative.} 
\begin{itemize}
\item (positive hyperbolic) $\det(I-\phi_\gamma) < 0$, or equivalently, $0 < \lambda_1 < 1 < \lambda_2$. 
In this case, the normal bundle of $\gamma$ splits into stable and unstable directions, both of which are trivial real line bundles over $\gamma$. 
\item (negative hyperbolic) $\det(I + \phi_\gamma) < 0$, or equivalently, $\lambda_1 < -1 < \lambda_2 < 0$. 
In this case, the normal bundle of $\gamma$ splits into stable and unstable directions, both of which are non-trivial real line bundles over $\gamma$, i.e., M\"obius bands. 
\item (elliptic) $\det(I-\phi_\gamma) > 0$ and $\det(I + \phi_\gamma) > 0$. 
This includes complex conjugate eigenvalues, sinks, and sources. 
\end{itemize}
We say a periodic orbit is \emph{primitive} if it is not a multiple cover of another orbit of smaller period. 
Let $\mathcal{O}_{\mathrm{prim}}$ denote the set of primitive flow loops. 

Consider the following count of flow loops:
\begin{equation}\label{eqn:flow-zeta}
\zeta_{S^3 \setminus K} := \prod_{\gamma \in \mathcal{O}_{\textrm{prim}}} w_\gamma \in \mathbb{Z}[[x]],
\end{equation}
where $x$ is the positive generator of $H_1(S^3 \setminus K)$, 
\[
w_\gamma := 
\begin{cases}
1-[\gamma] &\text{if }\gamma \text{ is elliptic}, \\
\frac{1}{1-[\gamma]} = 1 + [\gamma] + [\gamma]^2 + \cdots &\text{if }\gamma \text{ is positive hyperbolic}, \\
\frac{1}{1+[\gamma]} = 1 - [\gamma] + [\gamma]^2 - \cdots &\text{if }\gamma \text{ is negative hyperbolic},
\end{cases}
\]
and $[\gamma] \in H_1(S^3 \setminus K) \cong \mathbb{Z}$ is the corresponding homology class, so that $[\gamma] = x^{\deg \gamma}$ if $\deg \gamma$ denotes the period of $\gamma$. 
It follows from a well-known result of Hutchings--Lee \cite{HutchingsLee} that this count agrees with the inverse\footnote{Inverse because our weight $w_\gamma$ is the inverse of the one used in \cite{HutchingsLee}.} Reidemeister torsion:
\[
\zeta_{S^3 \setminus K} = \frac{1-x}{\Delta_K(x)},
\]
where $\Delta_K(x)$ is the Alexander polynomial\footnote{Normalized here in such a way that the power series expansion of the right-hand side starts with $1 + \cdots$.} of $K$, giving a dynamical definition of the Alexander polynomial. 
By simply expanding the product \eqref{eqn:flow-zeta}, we also get the following expression:
\begin{equation}\label{eqn:flow-zeta-expanded}
\zeta_{S^3 \setminus K} = \sum_{\gamma \in \mathcal{O}} (-1)^{\mathrm{e}(\gamma) + \mathrm{nh}(\gamma)} [\gamma],
\end{equation}
where $\mathcal{O}$ denotes the set of all multi-flow loops (i.e., with possibly multiple components) where the hyperbolic components are allowed to be multiply covered, while the elliptic components cannot be multiply covered, 
and $\mathrm{e}(\gamma)$ (resp., $\mathrm{nh}(\gamma)$) denotes the total number of elliptic (resp., negative hyperbolic) flow loops in $\gamma$.

The $\mathrm{GL}_1$-skein-valued count of flow loops $\Phi_{S^3 \setminus K}$ that we define below can be thought of as a $q$-deformation of this count $\zeta_{S^3 \setminus K}$. 
The basic idea is to count these flow loops in the $\mathrm{GL}_1$-skein module, by not only remembering their homology class but also how they link with each other.

\subsection{Framing}\label{subsec:framing}

In order to introduce framing on the flow loops, we choose a framing vector field $\xi_{\mathrm{fr}}$ (a.k.a.~4-chain vector field) similarly to \cite[Sec. 9.3]{EkholmLonghiParkShende}, 
\[
\xi_{\mathrm{fr}} = \xi_{\mathrm{Mor}} + \epsilon \xi_{\mathrm{pert}},
\]
by perturbing the Morse flow vector field with a perturbation vector field $\xi_{\mathrm{pert}}$, which is tangent to each fiber surface and has simple zeros, for which the boundary of the fiber surface is a sink. 
The locus of the zeros of $\xi_{\mathrm{pert}}$ is a closed $1$-chain $\ell_{\mathrm{fr}} \subset S^3 \setminus K$, called the \emph{framing line defect}, oriented in the direction of the Morse flow, with each component of $\ell_{\mathrm{fr}}$ carrying a sign $\in \{\pm 1\}$ corresponding to the index of the zero of $\xi_{\mathrm{pert}}$ in each fiber surface. 
We call framing lines with sign $+1$ (resp., $-1$) elliptic (resp., hyperbolic). 

As long as a flow loop $\gamma \in \mathcal{O}$ does not intersect $\ell_{\mathrm{fr}}$, the vector field $\xi_{\mathrm{fr}}$ is everywhere transverse to the tangent vectors of $\gamma$, so we can use it to give a framing on the oriented link $\gamma$. 
Note, since the tangent vectors of $\gamma$ are parallel to $\xi_{\mathrm{Mor}}$ in this setup, this is equivalent to the framing on $\gamma$ induced by $\xi_{\mathrm{pert}}$. 
In case a flow loop $\gamma \in \mathcal{O}$ intersects $\ell_{\mathrm{fr}}$, we may arbitrarily homotope $\gamma$ off the framing line defect to induce a framing on $\gamma$;
the relation \eqref{eq:framing-line} below ensures that the result is independent in the $\mathrm{GL}_1$-skein module with framing lines.

\subsection{Skein modules}\label{subsec:skein-modules}

Our count of Morse flow loops will take a value in a completion of the $\mathrm{GL}_1$-skein module of $S^3 \setminus K$, twisted by framing lines along $\ell_{\mathrm{fr}}$, whose definition is given as follows. 
\begin{defn}\label{defn:GL1-skein}
Let $Y$ be an oriented $3$-manifold, and let $\ell \subset Y$ be an oriented link each of whose component carries a sign. 
The \emph{$\mathrm{GL}_1$-skein module $\Sk^{\mathrm{GL}_1}_q(Y, \ell)$ of $Y$ with framing lines along $\ell$} is the $\mathbb{Z}[q^{\pm 1}]$-module generated by framed, oriented links in $Y \setminus \ell$, modulo the following local relations (all shown in blackboard framing):
% \[
% \Sk^{\mathrm{GL}_1}_q(Y, \gamma) := \frac{R\langle \text{isotopy classes of framed oriented links in }Y\setminus \gamma\rangle}{\langle \mathrm{GL}_1\text{-skein relations}\rangle},
% \]
% where the $\mathrm{GL}_1$-skein relations are given by
\begin{gather}
q^{-1}\;\vcenter{\hbox{
\begin{tikzpicture}[scale=0.7]
\draw[dotted] (0,0) circle (1);
\draw[ultra thick, ->] ({sqrt(2)/2},{-sqrt(2)/2}) -- ({-sqrt(2)/2},{sqrt(2)/2});
\draw[white, line width=2.5mm] ({-sqrt(2)/2},{-sqrt(2)/2}) -- ({sqrt(2)/2},{sqrt(2)/2});
\draw[ultra thick, ->] ({-sqrt(2)/2},{-sqrt(2)/2}) -- ({sqrt(2)/2},{sqrt(2)/2});
\end{tikzpicture}
}}
\;\;=\;\;
\vcenter{\hbox{
\begin{tikzpicture}[scale=0.7]
\draw[dotted] (0,0) circle (1);
\draw[ultra thick, <-] ({sqrt(2)/2},{sqrt(2)/2}) arc (135:225:1);
\draw[ultra thick, ->] ({-sqrt(2)/2},{-sqrt(2)/2}) arc (-45:45:1);
\end{tikzpicture}
}}
\;\;=\;\;
q\;
\vcenter{\hbox{
\begin{tikzpicture}[scale=0.7]
\draw[dotted] (0,0) circle (1);
\draw[ultra thick, ->] ({-sqrt(2)/2},{-sqrt(2)/2}) -- ({sqrt(2)/2},{sqrt(2)/2});
\draw[white, line width=2.5mm] ({sqrt(2)/2},{-sqrt(2)/2}) -- ({-sqrt(2)/2},{sqrt(2)/2});
\draw[ultra thick, ->] ({sqrt(2)/2},{-sqrt(2)/2}) -- ({-sqrt(2)/2},{sqrt(2)/2});
\end{tikzpicture}
}}
\;, \label{eq:gl1skeinrel1}
\\
\vcenter{\hbox{
\begin{tikzpicture}[scale=0.7]
\draw[dotted] (0,0) circle (1);
\draw[ultra thick, ->] (0.5,0) arc (0:370:0.5);
\end{tikzpicture}
}}
\;\;=\;\;
\vcenter{\hbox{
\begin{tikzpicture}[scale=0.7]
\draw[dotted] (0,0) circle (1);
\end{tikzpicture}
}}
\;, \label{eq:gl1skeinrel2}
\\
\vcenter{\hbox{
\begin{tikzpicture}[scale=0.7]
\draw[dotted] (0,0) circle (1);
\draw[ultra thick, green, <-] (-1, 0) -- (1, 0);
\draw[white, line width=2.5mm] (0, -1) -- (0, 1);
\draw[ultra thick, ->] (0, -1) -- (0, 1);
\node[green, right] at (1, 0){$\ell$};
\end{tikzpicture}
}}
\;\;=\;\;
q^{\mp}\;
\vcenter{\hbox{
\begin{tikzpicture}[scale=0.7]
\draw[dotted] (0,0) circle (1);
\draw[ultra thick, ->] (0, -1) -- (0, 1);
\draw[white, line width=2.5mm] (-1, 0) -- (1, 0);
\draw[ultra thick, green, <-] (-1, 0) -- (1, 0);
\node[green, right] at (1, 0){$\ell$};
\end{tikzpicture}
}}
\;, \label{eq:framing-line}
\end{gather}
where the sign $\mp$ in the last relation is the opposite of the sign $\pm$ of the component of $\ell$ involved. 
\end{defn}

\begin{rmk}\label{rmk:framing-line-canonical-identification}
The dependence of $\Sk^{\mathrm{GL}_1}_q(Y, \ell)$ on the framing line $\ell$ is only through its homology class $[\ell] \in H_1(Y)$, in that any 2-chain $\eta$ with boundary $\partial \eta = \ell - \ell'$ induces a natural isomorphism $\Sk^{\mathrm{GL}_1}_q(Y, \ell) \cong \Sk^{\mathrm{GL}_1}_q(Y, \ell')$; 
it sends each framed, oriented link $[L] \in \Sk^{\mathrm{GL}_1}_q(Y, \ell)$ to $q^{L\cdot \eta}[L] \in \Sk^{\mathrm{GL}_1}_q(Y, \ell')$. 
\end{rmk}

\begin{rmk}\label{rmk:half-twist}
It is often convenient to allow half-twists (i.e., half-integer framings) on the links generating the skein module. 
This can be done simply by extending the base ring to $\mathbb{Z}[q^{\pm \frac12}]$ and introducing the following extra relation:
\begin{align}
\vcenter{\hbox{
\begin{tikzpicture}[scale=0.7]
\fill[top color=black, bottom color=white] (-0.085, .85) to[out=-90, in=100] (0, 0) to[out=80, in=-90] (0.085, .85)--cycle;
\fill[bottom color=black, top color=white] (-0.085, -1) to[out=90, in=-100] (0, 0) to[out=-80, in=90] (0.085, -1)--cycle;
\draw[line width=1] (-0.075, -1) to[out=90, in=-90] (0.075, .85);
\draw[line width=3,-{Classical TikZ Rightarrow[width=4mm,length=2mm]}](0,.85)--(0,1);
\draw[dotted] (0,0) circle (1);
\end{tikzpicture}
}}
\;\;=\;\;
q^{\frac{1}{2}}\;
\vcenter{\hbox{
\begin{tikzpicture}[scale=0.7]
\draw[dotted] (0,0) circle (1);
\draw[line width=3,-{Classical TikZ Rightarrow[width=4mm,length=2mm]}] (0, -1) -- (0, 1);
\end{tikzpicture}
}}
\;. \label{eq:half-twist}
\end{align}
\end{rmk}

\begin{defn}
If, in addition, $Y$ is equipped with a closed $1$-form $\zeta$, there is a decreasing \emph{action filtration} $\{\Sk^{\mathrm{GL}_1}_q(Y,\ell)_{> E}\}_{E \in \mathbb{R}}$ on the skein module, where $\Sk^{\mathrm{GL}_1}_q(Y,\ell)_{> E}$ is the submodule of $\Sk^{\mathrm{GL}_1}_q(Y,\ell)$ spanned by links $\kappa$ for which $\int_{[\kappa]} \zeta > E$. 
Define the \emph{action completion} $\widehat{\Sk}^{\mathrm{GL}_1}_q(Y,\ell)$ to be the Novikov-type limit with respect to this filtration:
\[
\widehat{\Sk}^{\mathrm{GL}_1}_q(Y,\ell) = \underset{E \rightarrow \infty}{\varprojlim} \Sk^{\mathrm{GL}_1}_q(Y,\ell)/\Sk^{\mathrm{GL}_1}_q(Y,\ell)_{> E}. 
\]
\end{defn}

\subsection{Skein-valued flow loop count}\label{subsec:skein-valued_flow_loop_count}

\begin{defn}\label{defn:skein-valued-flow-loop-count}
In the setting as in Sections \ref{subsec:setup}-\ref{subsec:framing} above, define the \emph{$\mathrm{GL}_1$-skein-valued flow loop count} to be
\[
\Phi_{S^3 \setminus K} 
:= \sum_{\gamma \in \mathcal{O}} (-1)^{\mathrm{e}(\gamma) + \mathrm{nh}(\gamma)} [\gamma]
\;\in \widehat{\Sk}^{\mathrm{GL}_1}_q(S^3 \setminus K, \ell_{\mathrm{fr}})
,
\]
where $[\gamma]$ denotes the class of the corresponding framed, oriented link in the skein module $\Sk^{\mathrm{GL}_1}_q(S^3 \setminus K, \ell_{\mathrm{fr}})$. 
\end{defn}

While much of the discussion in this section generalizes straightforwardly to any fibered $3$-manifold (see Appendix~\ref{sec:fibered_cone}), what is particularly nice about fibered knot complements is that we can canonically identify the skein module with the ring of two-variable series:
\begin{prop}\label{prop:GL_1-skein_series_identification}
There is a canonical isomorphism 
\[
\widehat{\Sk}^{\mathrm{GL}_1}_q(S^3 \setminus K, \ell_{\mathrm{fr}}) \cong \mathbb{Z}[q^{\pm 1}]((x)), 
\]
which sends the positive meridian of $K$ to $x$. 
\end{prop}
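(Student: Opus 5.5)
The plan is to first compute the uncompleted skein module $\Sk^{\mathrm{GL}_1}_q(S^3\setminus K,\ell_{\mathrm{fr}})$, then pass to the action completion.

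\textbf{Step 1: a well-defined evaluation map.} The standard fact for the $\mathrm{GL}_1$-skein module of a rational homology sphere complement is that every framed oriented link $L$ is equivalent to $q^{c(L)}$ times a single representative of its homology class. Concretely, in $S^3$ the relations \eqref{eq:gl1skeinrel1}--\eqref{eq:gl1skeinrel2} evaluate a framed link $L=\bigcup L_i$ to $q^{\sum_{i,j}\mathrm{lk}(L_i,L_j)}$, where $\mathrm{lk}(L_i,L_i)$ means the framing (self-linking). In the complement we additionally remember $d=\mathrm{lk}(L,K)$, which is the class $[L]\in H_1(S^3\setminus K)\cong\mathbb{Z}$. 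We also record the framing-line interaction through $\mathrm{lk}(L,\ell_{\mathrm{fr}})$, with $\ell_{\mathrm{fr}}$ signed. So I would define
\[
\Psi(L)=q^{\,\mathrm{lk}(L,L)+\mathrm{lk}(L,\ell_{\mathrm{fr}})}\,x^{\mathrm{lk}(L,K)},
\]
where $\mathrm{lk}(L,L)$ denotes the total self-linking including framings, possibly with a sign or normalization convention to be fixed. All of these linking numbers are computed in $S^3$ and are isotopy invariants in $S^3\setminus(K\cup\ell_{\mathrm{fr}})$.

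I would then check that $\Psi$ respects each relation. A crossing change between strands changes $\mathrm{lk}(L,L)$ by $\pm 2$ in the total self-pairing. The smoothing relation \eqref{eq:gl1skeinrel1}, read as $q^{-1}X_+=X_0=qX_-$, matches this once the self-pairing is normalized as $\tfrac12\sum_{i,j}$ or with an appropriate exponent convention; I would fix the convention so that the unknot with framing $0$ evaluates to $1$. Removing a trivial circle gives \eqref{eq:gl1skeinrel2}, and crossing through $\ell$ gives \eqref{eq:framing-line}, which changes $\mathrm{lk}(L,\ell)$ by $\pm1$ with the prescribed sign. This is the main bookkeeping step.

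\textbf{Step 2: surjectivity and injectivity.} Using the relations, any link can be unlinked from itself, unknotted, and isotoped to a union of parallel meridians. These can be merged by smoothing, so $L$ is equivalent to $q^{m}\mu^{d}$, where $\mu^d$ is a fixed standard representative of class $d$, for example a $(d,\cdot)$-cable of a meridian or $|d|$ parallel meridians. This shows the module is generated over $\mathbb{Z}[q^{\pm1}]$ by the standard representatives $\mu^d$, $d\in\mathbb{Z}$. Since $\Psi(\mu^d)=x^d$ up to a fixed normalization, these generators are independent, and $\Psi$ is an isomorphism $\Sk\cong\mathbb{Z}[q^{\pm1}][x^{\pm1}]$. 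The normalization is fixed so that the positive meridian maps to $x$.

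\textbf{Step 3: completion.} The action filtration from $\zeta=\pi^*d\theta$ satisfies $\int_{[\kappa]}\zeta=2\pi\,\mathrm{lk}(\kappa,K)$, up to the positive fiber-degree normalization. So $\Sk_{>E}$ corresponds to the span of $x^d$ with $d>E/2\pi$. The inverse limit of $\mathbb{Z}[q^{\pm1}][x^{\pm1}]/(x^{>N})$ is $\mathbb{Z}[q^{\pm1}]((x))$, in which exponents are bounded below and arbitrary above.

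\textbf{Canonicity and the main obstacle.} The isomorphism uses only linking numbers in $S^3$, so it is canonical. The one subtlety, which I expect to be the main obstacle, is the dependence on $\ell_{\mathrm{fr}}$: by Remark~\ref{rmk:framing-line-canonical-identification}, changing $\ell_{\mathrm{fr}}$ within its homology class must be compatible with $\Psi$. This holds because $L\cdot\eta=\mathrm{lk}(L,\ell)-\mathrm{lk}(L,\ell')$ whenever $\partial\eta=\ell-\ell'$ in the complement. I would also check that $[\ell_{\mathrm{fr}}]$ (the signed count, equal to Euler characteristic data of the fiber) does not affect the identification beyond this.
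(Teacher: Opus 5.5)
This is essentially the paper's argument, with the well-definedness spelled out: the paper also reduces every link to parallel $0$-framed meridians times a power of $q$, notes that this power is forced to be a linking-number expression (which is exactly your $\Psi$), and then completes in the same way. When you fix conventions, the relations force $\Psi(L)=q^{\mathrm{lk}(L,L)-\mathrm{lk}(L,\ell_{\mathrm{fr}})}x^{\mathrm{lk}(L,K)}$, with two specific features: the self-linking is the unnormalized total self-linking with framings on the diagonal (no $\tfrac12$, since a smoothing changes it by exactly $1$ and a crossing change by $2$), and the signed framing-line term carries a minus sign; moreover, the reduction to meridians must use smoothings to split off meridians, because crossing changes and isotopy alone preserve free homotopy classes in $\pi_1(S^3\setminus K)$.
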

\begin{proof}
Using the $\mathrm{GL}_1$-skein relations given in Definition \ref{defn:GL1-skein}, it is easy to see that any framed oriented link $L$ can be turned into some parallel copies of the $0$-framed meridian times a power of $q$. 
Moreover, the power of $q$ is uniquely determined; it is given by the linking number
\[
\mathrm{lk}(L,L) - \mathrm{lk}(L, \ell_{\mathrm{fr}}),
\]
where $\mathrm{lk}(L, \ell_{\mathrm{fr}}) = \mathrm{lk}(L, \ell_{\mathrm{fr}}^+) - \mathrm{lk}(L, \ell_{\mathrm{fr}}^-)$ if $\ell_{\mathrm{fr}}^{\pm}$ denotes the $\pm$-signed components of $\ell_{\mathrm{fr}}$. 
This proves the isomorphism. 
\end{proof}

With this isomorphism, we can turn the $\mathrm{GL}_1$-skein-valued flow loop count defined in Definition \ref{defn:skein-valued-flow-loop-count} into an explicit two-variable series
\[
\Phi_{S^3 \setminus K} (x,q) \in \mathbb{Z}[q^{\pm 1}][[x]].
\]
This is the form used in the statement of Theorem \ref{mainthm:flow-loop-count}. 

\begin{proof}[Proof of Theorem~\ref{mainthm:flow-loop-count}]
Any two choices of auxiliary data (i.e., a circle-valued Morse function without any critical points, a metric, and a framing vector field) can be joined by a generic $1$-parameter family. 

A deformation of the framing vector field does not change the set of periodic orbits and only deforms the framing line $\ell_{\mathrm{fr}}$. 
The $2$-chain swept out by the framing line induces the canonical identification of the skein modules as in Remark~\ref{rmk:framing-line-canonical-identification}, and it follows from the framing line skein relation \eqref{eq:framing-line} that the flow loop count is preserved under this identification. 
Hence, it remains to consider deformations of the Morse data. 
% In a generic $1$-parameter family of a circle-valued Morse function and a metric, there are five types of bifurcations that may occur, as listed in \cite[Sec. 1.7]{Hutchings}. 
% However, since we are only interested in circle-valued Morse functions without any critical points (and $1$-parameter families through such Morse functions), the only bifurcation that may occur is a degenerate closed orbit, which includes two possibilities, 

As analyzed by Brunovsky \cite{Brunovsky}, the periodic orbits with period bounded above by some constant (i.e., some action bound) vary by isotopy away from some isolated parameters where bifurcation occurs. 
At isolated instances, there are two possibilities for the bifurcation:\footnote{A new periodic orbit of period $kp$ can branch from a periodic orbit $\gamma$ of period $p$ only if the linearized return map of $\gamma$ has a $k$-th root of unity as an eigenvalue, but non-real roots of unity are codimension-2 conditions, so cannot be seen in a generic 1-parameter family. 
The first case corresponds to the eigenvalue $+1$, and the second case corresponds to $-1$. See \cite[Thms.~1-4]{Brunovsky}. }
\begin{enumerate}
\item\label{itm:saddle-node} simple creation/annihilation of closed orbits,
\item\label{itm:period-doubling} period-doubling bifurcation.
\end{enumerate}
Thus, in order to show the invariance of the $\mathrm{GL}_1$-skein-valued flow loop count, it suffices to prove its invariance under these two types of bifurcation. 

Near an isolated bifurcation parameter, we may choose a sufficiently small neighborhood of the bifurcating orbit disjoint from all other relevant periodic orbits and from the framing line. 
Then, the linking numbers with all the relevant flow loops outside this neighborhood, as well as with the framing line, remain the same before and after the bifurcation. 
Consequently, it suffices to prove the bifurcation invariance of the skein-valued flow loop count within this neighborhood, and this follows from simple $q$-series identities, as we show below. 

\begin{enumerate}
\item \textbf{Simple creation/annihilation (a.k.a.~saddle-node bifurcation):} 
Under this bifurcation, a positive hyperbolic flow loop and an elliptic flow loop may be created or annihilated in pairs. 
Before turning on $q$, this corresponds to the identity
\[
1 = \frac{1-x}{1-x}.
\]
After turning on $q$, the invariance follows from the following $q$-series identity:\footnote{To see why this holds, simply split the right-hand side into the parts with $n_e = 0$ and $n_e = 1$ and see how the terms telescope.}
\[
1 = \sum_{\substack{n_{h_+} \geq 0 \\ 0\leq n_e\leq 1}} q^{f(n_{h_+} + n_e)^2} x^{n_{h_+}} (-x)^{n_e},
\]
where $f$ is the framing of the flow loops and $x$ is the homology class. 

\item \textbf{Period-doubling bifurcation (a.k.a. flip bifurcation):} 
Under this bifurcation, a negative hyperbolic flow loop may turn into an elliptic flow loop, together with a positive hyperbolic flow loop of twice the period.\footnote{There is also a version for $1-x = \frac{1-x^2}{1+x}$ where an elliptic flow loop turns into a negative hyperbolic flow loop, together with an elliptic flow loop of twice the period, but it is analogous.} 
In terms of knot holders, this can be visualized as splitting of a M\"obius band, while creating an elliptic flow loop:
\[
\vcenter{\hbox{
\begin{tikzpicture}
\draw (0.35, 0) to[out=90, in=-90] (-0.35, 2);
\draw[white, line width=5] (-0.35, 0) to[out=90, in=-90] (0.35, 2);
\draw (-0.35, 0) to[out=90, in=-90] (0.35, 2);
\draw (0.35, 2) to[out=90, in=180] (0.7, 2.35) to[out=0, in=90] (1.05, 2) -- (1.05, 0) to[out=-90, in=0] (0.7, -0.35) to[out=180, in=-90] (0.35, 0);
\draw (-0.35, 2) to[out=90, in=180] (0.7, 3.05) to[out=0, in=90] (1.75, 2) -- (1.75, 0) to[out=-90, in=0] (0.7, -1.05) to[out=180, in=-90] (-0.35, 0);

\draw[->, thick] (0, 0) -- (0, 0.4);
\draw[->, thick] (0, 1.6) -- (0, 2);
\end{tikzpicture}
}}
\qquad
\leftrightarrow
\qquad
\vcenter{\hbox{
\begin{tikzpicture}

\draw (0.35, 0) to[out=90, in=-90] (-0.35, 2);
\draw[white, line width=3] (0.1, 0) to[out=90, in=-90] (-0.1, 2);
\draw (0.1, 0) to[out=90, in=-90] (-0.1, 2);

\draw[red] (0, 0) -- (0, 2) to[out=90, in=180] (0.7, 2.7) to[out=0, in=90] (1.4, 2) -- (1.4, 0) to[out=-90, in=0] (0.7, -0.7) to[out=180, in=-90] cycle;

\draw[white, line width=4] (-0.1, 0) to[out=90, in=-90] (0.1, 2);
\draw (-0.1, 0) to[out=90, in=-90] (0.1, 2);
\draw[white, line width=5] (-0.35, 0) to[out=90, in=-90] (0.35, 2);
\draw (-0.35, 0) to[out=90, in=-90] (0.35, 2);

\draw (0.35, 2) to[out=90, in=180] (0.7, 2.35) to[out=0, in=90] (1.05, 2) -- (1.05, 0) to[out=-90, in=0] (0.7, -0.35) to[out=180, in=-90] (0.35, 0);
\draw (-0.35, 2) to[out=90, in=180] (0.7, 3.05) to[out=0, in=90] (1.75, 2) -- (1.75, 0) to[out=-90, in=0] (0.7, -1.05) to[out=180, in=-90] (-0.35, 0);

\draw (0.1, 2) to[out=90, in=180] (0.7, 2.6) to[out=0, in=90] (1.3, 2) -- (1.3, 0) to[out=-90, in=0] (0.7, -0.6) to[out=180, in=-90] (0.1, 0);
\draw (-0.1, 2) to[out=90, in=180] (0.7, 2.8) to[out=0, in=90] (1.5, 2) -- (1.5, 0) to[out=-90, in=0] (0.7, -0.8) to[out=180, in=-90] (-0.1, 0);

\draw[->, thick] (-0.225, 0) -- (-0.225, 0.3);
\draw[->, thick] (0.225, 0) -- (0.225, 0.3);

\draw[->, thick] (-0.225, 1.7) -- (-0.225, 2);
\draw[->, thick] (0.225, 1.7) -- (0.225, 2);
\end{tikzpicture}
}}
\]
Before turning on $q$, this corresponds to the identity
\[
\frac{1}{1+x} = \frac{1-x}{1-x^2}.
\]
After turning on $q$, the invariance follows from the following $q$-series identity:\footnote{To see why this holds, simply make the change of variable $n_{h_-} = 2n_{h_+} + n_e$.} 
\[
\sum_{n_{h_-} \geq 0} q^{f n_{h_-}^2} (-x)^{n_{h_-}}
=
\sum_{\substack{n_{h_+}\geq 0 \\ 0\leq n_e \leq 1}} q^{f(2n_{h_+} + n_e)^2} (x^2)^{n_{h_+}} (-x)^{n_e}
,
\]
where $f$ is the (half-integer) framing and $x$ is the homology class. 
\end{enumerate}
\end{proof}

\begin{rmk}\label{rmk:connection-to-curve-count}
As described briefly in the introduction, the count in Definition \ref{defn:skein-valued-flow-loop-count} is (the $\mathrm{GL}_1$-reduction of) the skein-valued curve count in $(T^*S^3, S^3 \sqcup M_K)$, where $M_K$ denotes the knot complement Lagrangian, obtained from a Lagrangian surgery on the clean intersection between the conormal Lagrangian $L_K$ and the zero section $S^3$, shifted off the zero section along a non-vanishing closed $1$-form; see \cite{ChauhanEkholmLonghi}. 
Alternatively, we can view $S^3 \setminus K$ as a $3$-manifold with cylindrical end, and count holomorphic curves in $(T^*(S^3 \setminus K), S^3 \setminus K \sqcup S^3 \setminus K)$, where the Lagrangians $S^3 \setminus K$ are shifted off from each other using a non-vanishing closed $1$-form.\footnote{To be more precise, the skein-valued curve count would give $\Phi_{S^3 \setminus K}(x,q^2)$, since all the linking numbers are counted twice, from the two copies of the $\mathrm{GL}_1$-skein module.}  

The framing vector field $\xi_{\mathrm{fr}}$ determines a $4$-chain that can be used in this count. 
When the shift is small, the correspondence between holomorphic curves and Morse flow graphs \cite{Ekholm_Morse-flow-tree} (see also \cite[Appendix A]{EkholmLonghiParkShende}) shows that all the holomorphic curves are holomorphic annuli (and their multiple covers), which correspond to Morse flow loops, and this is what we are counting. 
Thus, the invariance also follows from the corresponding deformation invariance of skein-valued curve count \cite{EkholmShende}. 
\end{rmk}

\subsection{Adding extra Wilson line defects}

Before we conclude this section, we remark that Theorem \ref{mainthm:flow-loop-count} is a special case of a more general theorem, where we allow insertion of extra Wilson line defects in $S^3 \setminus K$:
\begin{thm}\label{thm:abelianization}
For any fibered knot $K$, we have the ``abelianization'' homomorphism\footnote{As before, the $\mathrm{GL}_1$-skein module on the right-hand side is twisted by framing lines and action-completed; we just omitted it for simplicity of notation.}
\[
\mathrm{Ab} : \Sk^{\mathrm{GL}_2}_q(S^3 \setminus K) \rightarrow \Sk^{\mathrm{GL}_1}_q(S^3 \setminus K)^{\otimes 2}. 
\]
\end{thm}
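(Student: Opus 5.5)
We construct $\mathrm{Ab}$ by counting flow lines on links, extending the flow-loop count of Definition~\ref{defn:skein-valued-flow-loop-count}, in the spirit of the nonabelianization/abelianization maps of \cite{EkholmLonghiParkShende} and the $\mathrm{GL}_2$-version of \cite{ChauhanEkholmLonghi}. The picture is the $\mathrm{GL}_2$ skein of the union of two shifted copies of $S^3\setminus K$, as in Remark~\ref{rmk:connection-to-curve-count}. The map is defined first on generators and then shown to be well defined.

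\emph{Definition on generators.} Fix the Morse and framing data of Sections~\ref{subsec:setup}--\ref{subsec:framing}. Given a framed oriented link $L\subset S^3\setminus K$ in general position with respect to $\xi_{\mathrm{Mor}}$, lift $L$ to both sheets: $L^{(1)}\otimes 1$ and $1\otimes L^{(2)}$. We then add correction terms obtained by "exchange" flow lines: Morse flow segments that leave $L$ on one sheet and return to $L$ on the other. Resolving at the endpoints produces a link that switches sheets along the segment. Genericity makes these finite below each action level, since each segment has positive action. We set
\[
\mathrm{Ab}(L)=\sum_{\Gamma} \epsilon(\Gamma)\,[L_\Gamma]\cdot \Phi^{(12)},
\]
where the sum runs over finite collections $\Gamma$ of such segments together with multi-flow loops, $\epsilon(\Gamma)$ is the product of the signs from Definition~\ref{defn:skein-valued-flow-loop-count} and of $\pm$ signs at the corners, and $[L_\Gamma]$ is the resulting class in $\Sk^{\mathrm{GL}_1}_q\otimes\Sk^{\mathrm{GL}_1}_q$. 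Framings come from $\xi_{\mathrm{fr}}$, half-twists at the corners are handled using \eqref{eq:half-twist}, and the result is action-completed. The factor $\Phi^{(12)}$ is the closed flow-loop count, so the empty link maps to $\Phi_{S^3\setminus K}$ up to the normalisation noted in the footnote of Remark~\ref{rmk:connection-to-curve-count}.

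\emph{Well-definedness.} Three checks are needed, each local.

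\begin{enumerate}
\item \emph{Isotopy of $L$.} Generic isotopies change the set of exchange segments only at codimension-one events. These are a segment being born or dying at a tangency with $\xi_{\mathrm{Mor}}$, or a segment breaking through another strand of $L$. At a tangency, a pair of terms cancels. At a breaking, a crossing change in one sheet is compensated by the $\mathrm{GL}_1$ relation \eqref{eq:gl1skeinrel1}, as in the standard proof for nonabelianization.
\item \emph{The $\mathrm{GL}_2$ relations.} The crossing-change relation $X_+-X_-=(q-q^{-1})\,\mathrm{smoothing}$ holds in a small ball. There, $\mathrm{Ab}(X_+)-\mathrm{Ab}(X_-)$ reduces to the short exchange segment between the two strands, and its resolution gives exactly the smoothing term. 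The unknot relation holds because a small unknot maps to the sum of its two sheet lifts plus exchange terms, and these total the $\mathrm{GL}_2$ value of the unknot via \eqref{eq:gl1skeinrel2}. The framing relation is checked the same way.
\item \emph{Independence of the auxiliary data.} Changes of the framing field are absorbed by Remark~\ref{rmk:framing-line-canonical-identification} and \eqref{eq:framing-line}. For changes of the Morse data, away from $L$ the bifurcations are the two types already treated in the proof of Theorem~\ref{mainthm:flow-loop-count}, with the same $q$-series identities. The new bifurcations are exchange segments degenerating into a broken configuration: a segment glued to a flow loop, or two segments concatenated through a point of $L$.
\end{enumerate}

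\emph{Main obstacle.} The hard step is the last one: compactness and gluing for configurations mixing segments and loops, with the correct signs. The bifurcation where a flow loop collides with $L$ must trade a loop term against a segment term with multiple-cover corrections. I would first reduce this to a local model near the collision, namely a small neighbourhood of the loop containing a single arc of $L$, in the same way the proof of Theorem~\ref{mainthm:flow-loop-count} localizes. I would then check the required $q$-series identity directly, as in the saddle-node and period-doubling cases. If that fails, the fallback is to derive invariance from the skein-valued curve count of \cite{EkholmShende} through the Morse flow graph correspondence \cite{Ekholm_Morse-flow-tree}, as in Remark~\ref{rmk:connection-to-curve-count}.
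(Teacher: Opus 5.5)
As a proof, your direct construction has a genuine gap, and not only at the step you flag. That step covers the bifurcations where a flow loop or an exchange segment collides with $L$, together with compactness, gluing, signs and multiple covers. It is the whole analytic content of well-definedness, and you leave it open.

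Two of the checks you treat as routine also fail for the map as you define it. First, the unknot relation. Take a small round $0$-framed unknot $U$ lying in a disk transverse to $\xi_{\mathrm{Mor}}$. Its projection along the flow has no crossings, so there are no short exchange segments. The action-zero part of your $\mathrm{Ab}(U)$ is therefore $U^{(1)}\otimes 1+1\otimes U^{(2)}$. If the two lifts carry a common framing $f$, this equals $2q^{f}$; for the $0$-framing you describe, \eqref{eq:gl1skeinrel2} gives $2$. The $\mathrm{GL}_2$ relation instead forces it to be the quantum dimension of the unknot (e.g.\ $q+q^{-1}$) times the leading term $1$ of $\mathrm{Ab}(\emptyset)$, which is never of the form $2q^{f}$. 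So the definition must weight the two lifts differently: a sheet-dependent winding or framing correction relative to the flow direction, coming in the curve-counting picture from the framing/4-chain data of the two sheets. The crossing-change, framing and isotopy checks then have to be redone with these weights.

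Second, the expression $[L_\Gamma]\cdot\Phi^{(12)}$ is not meaningful. The flow loops link $L_\Gamma$, and on each sheet the identification in Proposition~\ref{prop:GL_1-skein_series_identification} gives the class of $A\sqcup B$ as the product of the classes of $A$ and $B$ times $q^{2\,\mathrm{lk}(A,B)}$. So the loops cannot be factored out. You had also already included multi-flow loops in $\Gamma$, so multiplying by $\Phi^{(12)}$ double-counts them.

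The paper avoids all of this by not constructing $\mathrm{Ab}$ by hand. It takes $\mathrm{Ab}$ as the special case of the skein trace map \cite[Theorem~1.3]{EkholmLonghiParkShende} for the trivial double cover $S^3\setminus K\sqcup S^3\setminus K\subset T^*(S^3\setminus K)$, with the two copies shifted apart by the closed $1$-form. Compatibility with the $\mathrm{GL}_2$ skein relations and independence of choices are thus inherited from skein-valued curve counting \cite{EkholmShende}, and flow graphs \cite{Ekholm_Morse-flow-tree} serve only to compute the map.

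Your fallback is essentially this argument, but it must be invoked for the whole statement, not only for the independence of auxiliary data in your item~(3). That the flow-graph count with boundary on the link is a curve count respecting the $\mathrm{GL}_2$ relations is exactly what that theorem provides. If you instead completed the direct route, you would first need to fix the definition (sheet weights, linking with loops) and then prove the local $q$-series identities for link--loop collisions. The payoff would be an elementary, curve-count-free construction, extending the paper's bifurcation proof of Theorem~\ref{mainthm:flow-loop-count} from the empty link to arbitrary Wilson lines, which the paper does not attempt.
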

\begin{proof}
This is a special case of the skein trace map \cite[Theorem 1.3]{EkholmLonghiParkShende}, applied to the trivial double cover Lagrangian $S^3 \setminus K \sqcup S^3 \setminus K \subset T^*(S^3 \setminus K)$. 
\end{proof}
The skein-valued flow loop count in Definition \ref{defn:skein-valued-flow-loop-count} is nothing but the image of the empty skein under this abelianization map:
\[
\Phi_{S^3 \setminus K}(x,q^2) = \mathrm{Ab}([\emptyset]).
\]

The following conjecture generalizes Conjecture \ref{conj:flow-equals-quantum}:
\begin{conj}
The abelianization map in Theorem \ref{thm:abelianization} is equivalent to the one constructed using quantum groups \cite[Theorem B]{Park_skeins}.\footnote{
We used $\mathrm{SL}_2$-skein modules in \cite{Park_skeins}, but they can be turned into $\mathrm{GL}_2$-skein modules by multiplying each framed, oriented link $L$ by $q^{\frac{\sum_i \mathrm{lk}(L_i, L_i)}{4}}$, where the sum is over all connected components $L_i$ of $L$. 
} 
\end{conj}

\section{Geometric state sum models on knot holders}\label{sec:knot-holder-state-sum}

In this section, we explain how one can compute the flow loop count defined in Section \ref{sec:skein-valued-flow-loop-count} in practice. 
The key idea is to reduce the dynamics of Morse flow into the symbolic dynamics on the corresponding knot holder, which in turn gives rise to a state sum model on knot holders.

\subsection{Knot holders}

We first review the theory of knot holders. 
See \cite{BirmanWilliams} for the original paper and \cite{GhristHolmesSullivan} for a comprehensive review. 
\begin{defn}[{\cite[Definition 2.2.1]{GhristHolmesSullivan}}]
A \emph{template} is a branched surface with boundary, supporting an expansive semiflow. 
\end{defn}
Here, ``semiflow'' means that the flow is only defined in forward time direction, and ``expansive'' means that the volume is expanding everywhere. 
In other words, a template is a union of strips, whose ends are glued along some branch lines where they can first join and then split; see Figure \ref{fig:template}. 
\begin{figure}
\centering
\[
\vcenter{\hbox{
\begin{tikzpicture}
\draw (0, 0) -- (0, 2);
\draw (1, 0) -- (1, 2);
\draw[snake it] (0, 0) -- (1, 0);
\draw[snake it] (0, 2) -- (1, 2);
\draw[->, thick] (0.5, 1.7) -- (0.5, 0.3);
\end{tikzpicture}
}}
\qquad\qquad
\vcenter{\hbox{
\begin{tikzpicture}
\draw (0, 0) to[out=90, in=-90] (0, 2);
\draw (3, 0) to[out=90, in=-90] (0.5, 2);
\draw[snake it] (0, 2) -- (0.5, 2); 
\draw[->, thick] (0.25, 2.3) -- (0.25, 1.7);

\filldraw[white, opacity=0.7] (0, 0) to[out=90, in=-90] (0.8, 2) -- (1.3, 2) to[out=-90, in=90] (3, 0) -- cycle;
\draw (0, 0) to[out=90, in=-90] (0.8, 2);
\draw (3, 0) to[out=90, in=-90] (1.3, 2);
\draw[snake it] (0.8, 2) -- (1.3, 2); 
\draw[->, thick] (1.05, 2.3) -- (1.05, 1.7);

\node at (1.9, 1.7){$\cdots$};

\filldraw[white, opacity=0.7] (0, 0) to[out=90, in=-90] (2.5, 2) -- (3, 2) to[out=-90, in=90] (3, 0) -- cycle;
\draw (0, 0) to[out=90, in=-90] (2.5, 2);
\draw (3, 0) to[out=90, in=-90] (3, 2);
\draw[snake it] (2.5, 2) -- (3, 2); 
\draw[->, thick] (2.75, 2.3) -- (2.75, 1.7);

\draw[very thick] (0, 0) -- (3, 0);

\draw (0, 0) to[out=-90, in=90] (0, -1.5);
\draw (0.5, 0) to[out=-90, in=90] (0.5, -1.5); 
\draw[snake it] (0.0, -1.5) -- (0.5, -1.5); 
\draw[->, thick] (0.25, -1.2) -- (0.25, -1.8);

\draw (0.8, 0) to[out=-90, in=90] (0.8, -1.5);
\draw (1.3, 0) to[out=-90, in=90] (1.3, -1.5);
\draw[snake it] (0.8, -1.5) -- (1.3, -1.5); 
\draw[->, thick] (1.05, -1.2) -- (1.05, -1.8);

\node at (1.9, -1){$\cdots$};

\draw (2.5, 0) to[out=-90, in=90] (2.5, -1.5);
\draw (3.0, 0) to[out=-90, in=90] (3.0, -1.5);
\draw[snake it] (2.5, -1.5) -- (3.0, -1.5); 
\draw[->, thick] (2.75, -1.2) -- (2.75, -1.8);
\end{tikzpicture}
}}
\]
\caption{Left: a strip supporting a flow along the direction of the arrow. Right: ends of strips glued along a branch line.}
\label{fig:template}
\end{figure}

A template embedded in a $3$-manifold is called an \emph{embedded template}, or a \emph{knot holder} \cite{BirmanWilliams}, as it can ``hold knots''; 
any closed $1$-manifold drawn on a knot holder along the direction of the semiflow determines a framed\footnote{The framing is induced by the normal direction to the knot holder; since the knot holder may be unorientable, here we are allowing half-twists in the framing.}, oriented link in the ambient $3$-manifold. 

A template $\mathcal{T}$ determines a symbolic dynamical system known as a \emph{shift of finite type} (SFT\footnote{Not to be confused with Symplectic Field Theory \cite{EliashbergGiventalHofer_SFT}, which is also very much relevant to skein-valued curve counts; see \cite[Sec. 8.3]{EkholmLonghiParkShende}.}) $\Sigma_{\mathcal{T}}$ as follows. 
Assign a symbol to each strip. 
Given any symbol, look at the branch line corresponding to the outgoing end of the strip corresponding to that symbol. 
The strips whose incoming ends are glued to that branch line determine the symbols that can follow the given symbol. 
Then, $\Sigma_{\mathcal{T}}$ is defined to be the set of all forward orbits, i.e., sequences
\[
\mathbf{a} =  a_0 a_1 a_2 \cdots
\]
of symbols obeying the admissibility condition (i.e., outgoing end of the strip $a_n$ is glued to the incoming end of the strip $a_{n+1}$). 
The shift map
\begin{align*}
\sigma : \Sigma_{\mathcal{T}} &\rightarrow \Sigma_{\mathcal{T}} \\
a_0 a_1 a_2 \cdots &\mapsto a_1 a_2 a_3 \cdots
\end{align*}
describes the discrete time evolution in this symbolic dynamical system. 
In this language, a knot contained in a knot holder corresponds to a periodic sequence of the symbols. 

Knot holders are central to the study of dynamics of three-dimensional flows, thanks to the following powerful theorem of Birman and Williams:
\begin{thm}[{\cite[Theorem 2.1]{BirmanWilliams}}]\label{thm:BirmanWilliams}
Given a flow $\phi_t$ on a $3$-manifold $M$ having a hyperbolic chain-recurrent set, there is a knot holder $\KH$ in $M$ such that the link of periodic orbits $L_\phi$ of the flow is in one-to-one correspondence (via isotopy) with the link of periodic orbits $L_{\KH}$ on the knot holder.\footnote{To be precise, with at most two extra orbits in $L_{\KH}$, coming from the DA (``derived from Anosov'') procedure, when the dimension of the chain-recurrent set is $>1$.} 
\end{thm}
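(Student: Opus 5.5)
The plan follows the original Birman--Williams argument, which rests on Markov partitions and symbolic dynamics of hyperbolic sets. First, the hyperbolic chain-recurrent set is decomposed into finitely many basic sets by Smale's spectral decomposition. Each nontrivial one-dimensional basic set admits local cross-sections, by the Bowen--Ratner construction, and on these we choose a Markov partition by small rectangles $R_1,\dots,R_N$. Each rectangle carries a product structure of local stable and unstable manifolds. The first-return map permutes the rectangles according to a $0/1$ transition matrix $A$. The resulting shift of finite type $\Sigma_A$ semiconjugates the flow on the basic set to the suspension of the shift. This coding is bounded-to-one, and periodic orbits correspond to periodic symbol sequences, apart from the usual boundary ambiguities of the partition.

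Second, we collapse along the strong stable foliation. Two points on the same local stable manifold have asymptotic forward orbits. We quotient a neighborhood of the basic set by the relation ``lie on the same local stable leaf within a rectangle'', which identifies each rectangle with an interval in its unstable direction. The flow boxes over the rectangles become strips. Wherever the images of several rectangles under the return map land in a common rectangle, the collapsed strips are glued along a branch line. The result is a branched surface $\KH$ with a semiflow that is expansive, since only the unstable direction survives. The transition structure of $\KH$ is exactly $A$, so $\KH$ is a template whose SFT $\Sigma_{\KH}$ equals $\Sigma_A$.

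Third, and this is the step that needs care, the collapse must be realized by an isotopy of periodic orbits in $M$, not just an abstract quotient. The approach is to use a one-parameter family of maps $\pi_s$, $s\in[0,1)$. Each $\pi_s$ pushes points along stable leaves a fraction $s$ of the way to the collapsed representative, in a tubular neighborhood of the basic set. Uniform contraction along stable leaves, together with the compactness of the neighborhood, keeps distinct periodic orbits disjoint for every $s<1$. Such orbits have distinct periodic codes, and two distinct periodic orbits can never lie on the same stable leaf, so they stay separated in the limit as well. The periodic orbits are therefore moved by an ambient isotopy onto their images on $\KH$. The map sending an orbit to its image is injective for this reason. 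It is surjective because every periodic word in $\Sigma_A$ is realized by a unique periodic orbit, by the shadowing lemma, except where the coding is not injective, namely on the boundaries of the Markov rectangles.

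This boundary issue is the main obstacle, and it is where the footnote's ``at most two extra orbits'' comes from. When the basic set has dimension greater than one, for example an Anosov piece, we would first apply Smale's DA (derived-from-Anosov) modification. This opens up a fixed orbit into a source (or sink) and makes the basic set one-dimensional, at the cost of introducing at most two new periodic orbits in the template coding. Trivial basic sets, such as isolated attracting or repelling orbits, are added to $\KH$ as separate annuli or M\"obius strips. Taking the union over all basic sets then gives the knot holder claimed in Theorem~\ref{thm:BirmanWilliams}.
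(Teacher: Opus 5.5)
You follow the route the paper points to. The paper does not prove this statement: it quotes Birman--Williams and only invokes the Markov-partition and stable-collapse construction from their proof (just before Corollary~\ref{cor:knot-holder-existence}). Within that route, two parts of your sketch are wrong as written.

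\textbf{The isotopy claim.} Your collapse $\pi_1$ is not injective on $\Lambda=\overline{L_\phi}$. It identifies distinct points of $\Lambda$ lying on a common local stable leaf, which is how the joining charts arise.
\begin{itemize}
\item A homeomorphism of $M$ agreeing with $\pi_1$ on the dense subset $L_\phi\subset\Lambda$ would agree with it on all of $\Lambda$. So $\pi_s$ cannot be completed to an ambient isotopy moving all periodic orbits at once.
\item More generally, no homeomorphism of $M$ carries $L_\phi$ onto $L_{\KH}$. It would carry $\Lambda$ onto $\overline{L_{\KH}}=\pi_1(\Lambda)$, which branches at the joining charts, whereas $\Lambda$ is locally a product.
\end{itemize}
The theorem should be read, as it is stated in the literature, as giving an ambient isotopy on every \emph{finite} sublink. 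Your argument does prove this, with three adjustments:
\begin{itemize}
\item $\pi_s$ is injective on $L_\phi$ for all $s\in[0,1]$ because each point moves inside its own local stable leaf and distinct points of periodic orbits never share one. Uniform contraction is not the reason.
\item Finitely many orbits stay a positive distance apart, which is what makes the finite case work.
\item The stable foliation is only continuous, so you must approximate the resulting isotopy by a tame one before invoking isotopy extension.
\end{itemize}

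\textbf{The DA paragraph misplaces the difficulty.}
\begin{itemize}
\item For a one-dimensional basic set, the cross-section meets $\Lambda$ in a Cantor set. The Markov rectangles can then be chosen pairwise disjoint and the coding is a conjugacy. There are no boundary ambiguities there, and the exceptional orbits do not come from boundary codes.
\item The real obstruction appears when the stable direction is not totally disconnected (Anosov pieces, two-dimensional repellers). Rectangles then abut along unstable edges, the relation generated by ``same local stable leaf within a rectangle'' propagates along entire dense stable leaves, and the quotient is not Hausdorff. DA opens up the stable direction, and the at most two exceptional orbits are the orbits created by that surgery.
\item One DA on an Anosov flow gives a repelling orbit plus a two-dimensional expanding attractor, not a one-dimensional basic set. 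That is already enough, since collapsing its Cantor stable direction yields a branched surface. If you insist on reducing to one-dimensional sets, you need further surgeries and lose the bound of two.
\item An isolated attracting or repelling orbit cannot sit alone on an annulus or M\"obius strip with an \emph{expansive} semiflow. A single strip's return map would have to be an expanding injective self-map of an interval, which is impossible. This is why the paper keeps such orbits off the knot holder, as $\gamma_e$.
\end{itemize}
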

See, e.g., \cite[Sec. 1.2]{GhristHolmesSullivan}, for a concise introduction to basic notions in dynamical systems, including the definition of the chain-recurrent set and its hyperbolicity. 
Here, we just remark that it is a mild assumption that can always be achieved in the cases of our interest, by modifying the circle-valued Morse function appropriately. 
Hence, the dynamics of a three-dimensional flow can be encoded in the symbolic dynamics on a knot holder, at least when it comes to questions on periodic orbits and their link types. 

\begin{eg}
The Lorenz knot holder \cite{BirmanWilliams_Lorenz}, which is a knot holder for the Lorenz system (Figure \ref{fig:Lorenz_attractor}), is shown in Figure \ref{fig:Lorenz-knot-holder}. 
\begin{figure}
\centering
\[
\vcenter{\hbox{
\begin{tikzpicture}[scale=0.8]

\draw (0.0, 0) [partial ellipse = 0 : 180 : 2.0 and 1.8];
\draw (-0.5, 0) [partial ellipse = 0 : 360 : 0.5];
\draw (-0.5, 0) [partial ellipse = -180 : 0 : 1.5];
\draw[->, thick] (-0.5, 0) [partial ellipse = -10 : -40 : 1.0];

\filldraw[white, opacity=0.9] (2.0, 0) [partial ellipse = 0 : 180 : 2.0 and 1.8];
\draw (2.0, 0) [partial ellipse = 0 : 180 : 2.0 and 1.8];
\draw (2.5, 0) [partial ellipse = 0 : 360 : 0.5];
\draw (2.5, 0) [partial ellipse = -180 : 0 : 1.5];
\draw[->, thick] (2.5, 0) [partial ellipse = -170 : -140 : 1.0];

\draw[very thick] (0, 0) -- (2, 0);
\end{tikzpicture}
}}
\]
\caption{Lorenz knot holder}
\label{fig:Lorenz-knot-holder}
\end{figure}
\end{eg}

\subsection{Knot holders for fibered link complements}\label{subsec:knot-holder-construction}
Here, we describe in detail how to construct a knot holder for the suspension flow in any mapping torus of a surface with boundary, such as in any fibered link complement. 

\begin{defn}
Given a compact, connected, oriented surface $F$, a \emph{marking} on $F$ consists of the following:
\begin{enumerate}
\item A collection of (green) non-intersecting arcs $\alpha_1, \cdots, \alpha_{n} \subset F$ with boundary points on $\partial F$, such that $F \setminus (\alpha_1 \cup \cdots \cup \alpha_n)$ is a disjoint union of disks. 
\item A collection of (red) base points $p_1, \cdots, p_m \in \mathrm{int}\qty(F \setminus (\alpha_1 \cup \cdots \cup \alpha_n))$, one for each disk component. 
\end{enumerate}
\end{defn}
If $F$ is a surface of genus $g$ and $b$ boundary components, then $n \geq 2g + (b-1)$, and
\[
m = \chi(F \setminus (\alpha_1 \cup \cdots \cup \alpha_n)) = 2-2g-b + n.
\]
See Figure \ref{fig:marked-surface} for an example of a marked surface, where the arcs are drawn in green, and the base points are drawn in red. 
\begin{figure}
\centering
\[
\vcenter{\hbox{
\begin{tikzpicture}[scale=0.8]
\draw (0, 0) -- (0, -2) -- (8, -2) -- (8, 0);
\draw (3.5, 0) -- (4.5, 0);
\node[above] at (4, 0){$\cdots$};

\filldraw[red] (4, -1.7) circle (0.05);

\draw (1.25, 0) [partial ellipse = 0 : 180 : 1.25];
\draw (1.25, 0) [partial ellipse = 0 : 180 : 0.75];
\draw[darkgreen, thick] (1.25, 0.75) -- (1.25, 1.25);

\filldraw[white, opacity=0.9] (1, 0) arc (180:0:1.25) -- (3,0) arc (0:180:0.75) -- cycle; 

\draw (2.25, 0) [partial ellipse = 0 : 180 : 1.25];
\draw (2.25, 0) [partial ellipse = 0 : 180 : 0.75];
\draw[darkgreen, thick] (2.25, 0.75) -- (2.25, 1.25);

\draw (0.5, 0) -- (1, 0);
\draw (1.5, 0) -- (2, 0);
\draw (2.5, 0) -- (3, 0);

\begin{scope}[shift={(4.5, 0)}]
    \draw (1.25, 0) [partial ellipse = 0 : 180 : 1.25];
    \draw (1.25, 0) [partial ellipse = 0 : 180 : 0.75];
    \draw[darkgreen, thick] (1.25, 0.75) -- (1.25, 1.25);
    
    \filldraw[white, opacity=0.9] (1, 0) arc (180:0:1.25) -- (3,0) arc (0:180:0.75) -- cycle; 
    
    \draw (2.25, 0) [partial ellipse = 0 : 180 : 1.25];
    \draw (2.25, 0) [partial ellipse = 0 : 180 : 0.75];
    \draw[darkgreen, thick] (2.25, 0.75) -- (2.25, 1.25);
    
    \draw (0.5, 0) -- (1, 0);
    \draw (1.5, 0) -- (2, 0);
    \draw (2.5, 0) -- (3, 0);
\end{scope}
\end{tikzpicture}
}}
\]
\caption{A marked surface}
\label{fig:marked-surface}
\end{figure}

\begin{rmk}\label{rmk:marking-Morse-function}
Note that, given such a marking, there is a Morse function on $F$ for which
\begin{enumerate}
\item the boundary $\partial F$ is a level set which is the global minimum, 
\item the (red) base points are the local maxima, and 
\item the (green) arcs are the descending manifolds of the saddle points. 
\end{enumerate}
\end{rmk}

Given a marking on $F$, any self-diffeomorphism $\varphi : F \rightarrow F$ fixing the boundary $\partial F$ pointwise, induces another marking on $F$; we will simply denote the resulting marked surface by $\varphi(F)$. 
By choosing some isotopy of $\varphi$, we may assume that $\varphi$ preserves the set of base points, \(\{p_1, \cdots, p_m\} = \{\varphi(p_1), \cdots, \varphi(p_m)\}\).\footnote{$\varphi$ may permute the base points.} 
Since each disk component $D$ of $F \setminus (\alpha_1 \cup \cdots \cup \alpha_n)$ contains a base point in the interior, we can isotope all parts of the arcs of $\varphi(F)$ drawn on $D$ to a small neighborhood of the boundary $\partial D$, without crossing the base point. 
We can then split the arcs by colliding them with the boundary $\partial D \cap \partial F$ (i.e., the part of $\partial D$ which is not in $\alpha_1 \cup \cdots \cup \alpha_n$) using the move shown on the left of Figure \ref{fig:splitting-joining}. 
After such splitting, we will have a collection of arcs each of which is parallel to some arc $\alpha_i$. 
We then join the parallel arcs using the move shown on the right of Figure \ref{fig:splitting-joining}. 
\begin{figure}
\centering
\[
\vcenter{\hbox{
\begin{tikzpicture}
\draw (-1, 0) -- (1, 0);
\draw[darkgreen, thick] ({-sqrt(3)/2}, 0.5) -- ({sqrt(3)/2}, 0.5);
\draw[dotted] (-1, 0) arc (180:0:1); 
\end{tikzpicture}
}}
\rightsquigarrow
\vcenter{\hbox{
\begin{tikzpicture}
\coordinate (a) at ({-sqrt(3)/2}, 0.5);
\coordinate (b) at ({sqrt(3)/2}, 0.5);
\draw (-1, 0) -- (1, 0);
\draw[darkgreen, thick] (a) to[out=-30, in=120] (-0.2, 0);
\draw[darkgreen, thick] (0.2, 0) to[out=60, in=-150] (b);
\draw[dotted] (-1, 0) arc (180:0:1); 
\end{tikzpicture}
}}
\qquad\qquad
\vcenter{\hbox{
\begin{tikzpicture}
\draw (-0.5, 0.7) -- (0.5, 0.7);
\draw (-0.5, -0.7) -- (0.5, -0.7);
\draw[dotted] (-0.5, 0.7) -- (-0.5, -0.7);
\draw[dotted] (0.5, 0.7) -- (0.5, -0.7);
\draw[darkgreen, thick] (-0.2, 0.7) -- (-0.2, -0.7);
\draw[darkgreen, thick] (0.2, 0.7) -- (0.2, -0.7);
\end{tikzpicture}
}}
\rightsquigarrow
\vcenter{\hbox{
\begin{tikzpicture}
\draw (-0.5, 0.7) -- (0.5, 0.7);
\draw (-0.5, -0.7) -- (0.5, -0.7);
\draw[dotted] (-0.5, 0.7) -- (-0.5, -0.7);
\draw[dotted] (0.5, 0.7) -- (0.5, -0.7);
\draw[darkgreen, thick] (0, 0.7) -- (0, -0.7);
\end{tikzpicture}
}}
\]
\caption{Splitting (left) and joining (right)}
\label{fig:splitting-joining}
\end{figure}

The movie of splitting and joining of arcs we just described determines a branched surface $S \subset F \times [0, 1]$ such that
\begin{enumerate}
\item $S \cap (F \times \{0\}) = \varphi(\alpha_1 \cup \cdots \cup \alpha_n)$, 
\item $S \cap (F \times \{1\}) = \alpha_1 \cup \cdots \cup \alpha_n$, and 
\item $S$ is disjoint from $\{p_1, \cdots, p_m\} \times [0, 1] \subset F \times [0, 1]$. 
\end{enumerate}
Gluing the two boundary components of $F \times [0,1]$ by the monodromy $\varphi$ (i.e., $(q,1) \sim (\varphi(q),0)$ for any $q \in F$), we obtain the mapping torus $M_\varphi$ of $\varphi$, equipped with
\begin{enumerate}
\item a branched surface $\KH := S/\sim \;\subset M_\varphi$, and
\item an oriented link $\gamma_e := (\{p_1, \cdots, p_m\} \times [0, 1])/\sim \;\subset M_\varphi$ disjoint from $\KH$. 
\end{enumerate}

We now deform the suspension flow on $M_\varphi$ using the movie of markings as follows. 
At each stage of the movie, choose a Morse function on $F$ adapted to the corresponding marking, as in Remark~\ref{rmk:marking-Morse-function}, and let the flow spend a sufficiently long time following the associated fiberwise Morse flow. 
In this way, every point of $F$ away from a small neighborhood of the base points, after sufficiently long time, will have flowed either toward the boundary or into a small neighborhood of the descending manifolds of the saddle points, namely the arcs of the marking. 
As the marking undergoes splitting and joining, these neighborhoods correspondingly split and join. 
Thus, in the mapping torus $M_\varphi$, away from the repelling elliptic flow loops $\gamma_e$, the recurrent part of the resulting flow is carried by a neighborhood of the branched surface $\KH := S/\sim \;\subset M_\varphi$. 

The standard Markov partition construction in the proof of Theorem~\ref{thm:BirmanWilliams} applies to the recurrent set carried by this neighborhood. 
The corresponding Markov flow boxes may be chosen so that, after collapsing their stable directions, they give precisely the strips swept out by the arcs of the marking, with the standard joining and splitting charts. 
The resulting branched surface is therefore $\KH$. 
The resulting properties of the flow are summarized as follows:
\begin{cor}\label{cor:knot-holder-existence}
For any mapping torus $M_\varphi$ of a surface $F$ with boundary, one can choose some metric and a circle-valued Morse function on $M_\varphi$ in such a way that 
\begin{enumerate}
\item the boundary of $M_\varphi$ is a sink of the Morse flow, 
\item all the periodic orbits of the Morse flow are nondegenerate, 
\item the hyperbolic flow loops are in one-to-one correspondence with periodic orbits of the symbolic dynamics on the knot holder $\KH$, and
\item the (primitive) elliptic flow loops are in one-to-one correspondence with the components of the link $\gamma_e$. 
\end{enumerate}
Moreover, the one-to-one correspondence of the periodic orbits is by ambient isotopy. 
\end{cor}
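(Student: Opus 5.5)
We will build the flow explicitly from the marking movie of Section~\ref{subsec:knot-holder-construction}, and then read off the four properties one at a time. First, we realize the suspension flow of $\varphi$ as a Morse flow. We take $\pi : M_\varphi \to S^1$ to be the projection to the $[0,1]/\!\sim$ coordinate, with a metric that is fiberwise a product metric. We then modify the vector field by adding, in each slice $F\times\{t\}$, a small multiple of the gradient-like vector field of a Morse function $h_t$ adapted to the marking at time $t$, as in Remark~\ref{rmk:marking-Morse-function}. The family $h_t$ is chosen to be constant in $t$ except during the short intervals in which a splitting or joining move occurs. Since we only add a fiber-tangent term, $d\pi(\xi)>0$ still holds. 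Hence there are no critical points, and the result is dual to the closed $1$-form $\pi^*d\theta$ for a suitably modified metric. Because $\partial F$ is the global minimum of each $h_t$, the flow pushes toward $\partial F$ near the boundary, and we arrange that $\partial M_\varphi$ is a sink. This gives (1).

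Next, we analyze the return map $P : F\to F$ on the fiber $F\times\{0\}$. It is the composition of $\varphi$ with long-time fiberwise Morse flows. Near each base point $p_i$ these flows are strongly repelling, since $p_i$ is a local maximum. Because $\varphi$ permutes the base points, $\gamma_e$ consists of periodic orbits whose linearized return maps are expanding in both directions. These orbits are sources, so they are elliptic in the sense of Section~\ref{subsec:setup} and nondegenerate. Every other point either converges to $\partial F$ or, after long time, enters a thin neighborhood $N$ of the arcs. On $N$ the flow contracts strongly in the direction transverse to the arcs, because the arcs are descending manifolds of saddles. Along the arcs, the splitting and joining moves stretch the strips over one another, which forces expansion in that direction. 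The strips are arranged so that the invariant set of $P$ in $N$ is a hyperbolic Cantor-type set, and there are no other recurrent points. It follows that the chain-recurrent set consists of $\gamma_e$, the sink $\partial M_\varphi$, and a hyperbolic basic set carried by $N$.

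Now Theorem~\ref{thm:BirmanWilliams} applies, and its Markov partition can be taken to be the flow boxes over the strips of the arcs, as described just before the statement. Collapsing the stable directions then gives exactly $\KH$. In our setting the chain-recurrent set carried by $N$ is one-dimensional transversally, so no DA orbits arise. This gives the bijection (3) between hyperbolic flow loops and periodic symbolic orbits on $\KH$, realized by isotopy along the stable fibers. All hyperbolic periodic orbits are nondegenerate by uniform hyperbolicity, and the orbits in $\gamma_e$ are nondegenerate as shown above. Together these give (2). For (4), every elliptic orbit must lie outside the hyperbolic set and outside the basin of $\partial M_\varphi$. The only such recurrent points are the base points, so the primitive elliptic loops are exactly the components of $\gamma_e$.

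The main obstacle is the hyperbolicity claim: showing that the expansion along the arcs is genuinely uniform. Joining parallel arcs is a priori a contraction, not a stretch. If $\varphi$ is reducible or periodic on some subsurface, some strips may not be expanded. Our first approach is to choose the metric so that every strip is expanded by a factor greater than $1$ in the arc direction at each passage, using the freedom in the lengths of the strips. This works because the return map on $N$ factors through the template's branched-surface map, which is expanding by the definition of a template. If this fails for periodic pieces, we would fall back on the Hutchings--Lee-type generic perturbation to make those orbits nondegenerate. We would then still identify them with orbits on $\KH$ by a direct shadowing argument.
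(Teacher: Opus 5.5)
Your route is the paper's: make the flow follow fiberwise Morse flows adapted to the movie of markings for a long time; observe that, away from $\gamma_e$ and the basin of the boundary, the recurrent set is carried by a neighborhood $N$ of $\KH$; then run the Birman--Williams Markov partition construction with flow boxes over the strips. The gap is the step you yourself flag, uniform expansion along the arcs, which you never establish.

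Your justification, that the return map on $N$ ``factors through the template's branched-surface map, which is expanding by the definition of a template'', is circular. $\KH$ is produced combinatorially from the marking movie, and the fact that it carries an expansive semiflow modelling the flow is exactly what the dynamics has to supply. The fallback (a Hutchings--Lee perturbation plus an unspecified shadowing argument) is not an argument as it stands. You are also looking for expansion in the wrong place. Splitting and joining are combinatorics of the collapsed picture. The convergence of parallel arcs at a joining is contraction \emph{transverse} to the arcs, i.e.\ in the stable direction, so it is harmless.

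The missing observation is that the saddles $q_j$ which give you the transverse contraction also give the expansion. Each arc $\alpha_j$ is the descending manifold of $q_j$, i.e.\ its unstable manifold for the downward fiberwise flow. Suppose a point lies in the core of the strip around $\alpha_j$ (away from the collar of $\partial F$) at the end of a stage of fiberwise length $T$. Then it must have started that stage within distance of order $e^{-\lambda T}$ of the ascending manifold of $q_j$, and spent time $T-O(1)$ near $q_j$. Here $\lambda,\mu>0$ are the eigenvalue rates at the saddles, and $N$ and $\varphi(N)$ stay away from the base points. So the thin rectangle of such points is stretched across the whole strip. The derivative is $\gtrsim e^{\lambda T}$ along the arc and $\lesssim e^{-\mu T}$ transverse to it, uniformly, since $D\varphi$ and the flow away from the saddles are bounded.

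For $T$ large, the invariant set in $N$ is therefore a uniformly hyperbolic horseshoe whose Markov rectangles are the strips. This holds regardless of the Nielsen--Thurston type of $\varphi$: for $\varphi=\mathrm{id}$ it is just the saddle orbits $q_j\times S^1$, matching the annuli $\alpha_j\times S^1$ of $\KH$. Your concern about reducible or periodic pieces disappears, and no perturbation or shadowing is needed.

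This mechanism needs the fiber-tangent part of the vector field to be \emph{large} (a long time spent following $h_t$), not a ``small multiple'' as in your first paragraph. With a $C^1$-small perturbation the return map stays close to $\varphi$, and neither the strong repulsion at the base points nor the collapse onto the arcs occurs.

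Two minor corrections. First, the basic set in $N$ is transversally zero-dimensional (a Cantor set in the fiber), which is why no DA orbits arise. Second, apply the Markov partition construction locally to that basic set, as the paper does, rather than quoting Theorem~\ref{thm:BirmanWilliams} for the whole flow, since the boundary part of the chain-recurrent set need not be hyperbolic.
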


We now specialize the construction described above to the case of fibered link complements. 
Let $K \subset S^3$ be a fibered link, and let $F$ be its minimal genus Seifert surface, which is unique up to isotopy. 
We can then view the complement $S^3 \setminus K$ as the mapping torus $M_\varphi$ of the monodromy map $\varphi : F \rightarrow F$. 
In this setup, there are two convenient choices for the repelling elliptic flow loops $\gamma_e$ (i.e., the movie of red base points):
\begin{itemize}
\item Meridian of $K \subset S^3$: in this case, there will be a single red base point $p \in F$ close to the boundary. 
This choice of elliptic flow loop contributes an overall factor of $1-x$ to $\Phi_{S^3 \setminus K}$, as it has $0$ linking number with any other hyperbolic flow loops in $\KH$. 
\item Braid axis of $F \subset S^3$ as a braided Seifert surface \cite{Rudolph}: in this case, the number of red base points on $F$ is equal to the number of braid strands. 
While we would no longer have a nice factorization of the contribution of elliptic flow loops (as they link non-trivially with hyperbolic flow loops), this choice is often useful if we want the knot holder to be adapted to a knot diagram, as we can then choose the arcs $\alpha_1, \cdots, \alpha_n$ to sit at the bands of the braided Seifert surface; see Figure \ref{fig:braided-Seifert-surface}.  
\end{itemize}

\begin{figure}
\centering
\[
\vcenter{\hbox{
\begin{tikzpicture}
\draw (0, 0) -- (0, 4) -- (0.5, 5) -- (0.5, 1) -- cycle;
\draw (1, 0) -- (1, 4) -- (1.5, 5) -- (1.5, 1) -- cycle;
\draw (2, 0) -- (2, 4) -- (2.5, 5) -- (2.5, 1) -- cycle;

\filldraw[red] (0.25, 2.8) circle (0.05);
\filldraw[red] (1.25, 2.8) circle (0.05);
\filldraw[red] (2.25, 2.8) circle (0.05);

\filldraw[white, opacity=0.8] (0, 0.4) -- (1, 0.9) -- (1, 0.4) -- (0, 0.9) -- cycle;
\draw (1, 0.4) -- (0, 0.9);
\draw[white, opacity=0.95, line width=5] (0, 0.4) -- (1, 0.9);
\draw (0, 0.4) -- (1, 0.9);
% \filldraw[darkgreen] (0.5, 0.65) circle (0.05);
\draw[darkgreen, thick] (0.2, 0.5) to[out=110, in=-110] (0.2, 0.8);

\begin{scope}[shift={(0, 1.8)}]
    \filldraw[white, opacity=0.8] (0, 0.4) -- (1, 0.9) -- (1, 0.4) -- (0, 0.9) -- cycle;
    \draw (1, 0.4) -- (0, 0.9);
    \draw[white, opacity=0.95, line width=5] (0, 0.4) -- (1, 0.9);
    \draw (0, 0.4) -- (1, 0.9);
    % \filldraw[darkgreen] (0.5, 0.65) circle (0.05);
    \draw[darkgreen, thick] (0.2, 0.5) to[out=110, in=-110] (0.2, 0.8);
\end{scope}
\begin{scope}[shift={(1, 2.2)}, yscale=-1]
    \filldraw[white, opacity=0.8] (0, 0.4) -- (1, 0.9) -- (1, 0.4) -- (0, 0.9) -- cycle;
    \draw (1, 0.4) -- (0, 0.9);
    \draw[white, opacity=0.95, line width=5] (0, 0.4) -- (1, 0.9);
    \draw (0, 0.4) -- (1, 0.9);
    % \filldraw[darkgreen] (0.5, 0.65) circle (0.05);
    \draw[darkgreen, thick] (0.2, 0.5) to[out=110, in=-110] (0.2, 0.8);
\end{scope}
\begin{scope}[shift={(1, 4.0)}, yscale=-1]
    \filldraw[white, opacity=0.8] (0, 0.4) -- (1, 0.9) -- (1, 0.4) -- (0, 0.9) -- cycle;
    \draw (1, 0.4) -- (0, 0.9);
    \draw[white, opacity=0.95, line width=5] (0, 0.4) -- (1, 0.9);
    \draw (0, 0.4) -- (1, 0.9);
    % \filldraw[darkgreen] (0.5, 0.65) circle (0.05);
    \draw[darkgreen, thick] (0.2, 0.5) to[out=110, in=-110] (0.2, 0.8);
\end{scope}
\end{tikzpicture}
}}
\]
\caption{A natural choice of marking on a braided Seifert surface; the red base points are the intersection of the braid axis with the braided Seifert surface, and the green arcs are the cocores of the bands.}
\label{fig:braided-Seifert-surface}
\end{figure}

\begin{eg}[Trefoil knot complement]\label{eg:trefoil-direct-model}
We illustrate the construction of knot holders through the simplest non-trivial fibered knot, namely the trefoil knot $K = \mathbf{3}_1^r$. 
\begin{figure}
\centering
\[
\vcenter{\hbox{
\begin{tikzpicture}

\draw (0, 0) -- (0, 4) -- (-1, 4) -- (-1, 0) -- cycle;
\draw (1, 0) -- (1, 4) -- (2, 4) -- (2, 0) -- cycle;

\filldraw[red] (1.75, 2) circle (0.05);

\begin{scope}[shift={(0, 0)}]
    \filldraw[white, opacity=1.0] (0, 0.5) -- (1, 1.0) -- (1, 0.5) -- (0, 1.0) -- cycle;
    \draw (1, 0.5) -- (0, 1.0);
    \draw[white, opacity=0.95, line width=5] (0, 0.5) -- (1, 1.0);
    \draw (0, 0.5) -- (1, 1.0);
    \draw[darkgreen, thick, ->] (0.2, 0.6) to[out=110, in=-110] (0.2, 0.9);
    \node[darkgreen, left] at (0.1, 0.75){$\alpha_1$};
\end{scope}

\begin{scope}[shift={(0, 1.25)}]
    \filldraw[white, opacity=1.0] (0, 0.5) -- (1, 1.0) -- (1, 0.5) -- (0, 1.0) -- cycle;
    \draw (1, 0.5) -- (0, 1.0);
    \draw[white, opacity=0.95, line width=5] (0, 0.5) -- (1, 1.0);
    \draw (0, 0.5) -- (1, 1.0);
    \draw[darkgreen!50, thick, ->] (0.2, 0.6) to[out=110, in=-110] (0.2, 0.9);
    \node[darkgreen!50, left] at (0.1, 0.75){$\alpha_2$};
\end{scope}

\begin{scope}[shift={(0, 2.5)}]
    \filldraw[white, opacity=1.0] (0, 0.5) -- (1, 1.0) -- (1, 0.5) -- (0, 1.0) -- cycle;
    \draw (1, 0.5) -- (0, 1.0);
    \draw[white, opacity=0.95, line width=5] (0, 0.5) -- (1, 1.0);
    \draw (0, 0.5) -- (1, 1.0);
    % \draw[darkgreen, thick] (0.2, 0.6) to[out=110, in=-110] (0.2, 0.9);
    % \node[darkgreen, left] at (0.1, 0.75){$\alpha_3$};
\end{scope}

\filldraw[lightgray, opacity=0.2] (1, 0) -- (1, 0.5) -- (0.5, 0.75) -- (1, 1.0) -- (1, 1.75) -- (0.5, 2.0) -- (1, 2.25) -- (1, 3.0) -- (0.5, 3.25) -- (1, 3.5) -- (1, 4) -- (2, 4) -- (2, 0) -- cycle;

\node[below] at (0.5, -0.2){$F$};
\end{tikzpicture}
}}
\;
\overset{\varphi}{\rightsquigarrow}
\;
\vcenter{\hbox{
\begin{tikzpicture}[ 
    mid/.style={
        postaction={
            decorate,
            decoration={
                markings,
                % Mark at position 0.5 (the middle) with an arrow tip
                mark=at position 0.5 with {\arrow{>}} 
            }
        }
    },
    midback/.style={
        postaction={
            decorate,
            decoration={
                markings,
                % Mark at position 0.5 (the middle) with an arrow tip
                mark=at position 0.5 with {\arrow{<}} 
            }
        }
    },
]

\draw (0, 0) -- (0, 4) -- (-1, 4) -- (-1, 0) -- cycle;
\draw (1, 0) -- (1, 4) -- (2, 4) -- (2, 0) -- cycle;

\filldraw[red] (1.75, 2) circle (0.05);

\begin{scope}[shift={(0, 0)}]
    \filldraw[white, opacity=1.0] (0, 0.5) -- (1, 1.0) -- (1, 0.5) -- (0, 1.0) -- cycle;
    \draw (1, 0.5) -- (0, 1.0);
    \draw[white, opacity=0.95, line width=5] (0, 0.5) -- (1, 1.0);
    \draw (0, 0.5) -- (1, 1.0);
\end{scope}

\begin{scope}[shift={(0, 1.25)}]
    \filldraw[white, opacity=1.0] (0, 0.5) -- (1, 1.0) -- (1, 0.5) -- (0, 1.0) -- cycle;
    \draw (1, 0.5) -- (0, 1.0);
    \draw[white, opacity=0.95, line width=5] (0, 0.5) -- (1, 1.0);
    \draw (0, 0.5) -- (1, 1.0);
\end{scope}

\begin{scope}[shift={(0, 2.5)}]
    \filldraw[white, opacity=1.0] (0, 0.5) -- (1, 1.0) -- (1, 0.5) -- (0, 1.0) -- cycle;
    \draw (1, 0.5) -- (0, 1.0);
    \draw[white, opacity=0.95, line width=5] (0, 0.5) -- (1, 1.0);
    \draw (0, 0.5) -- (1, 1.0);
\end{scope}

\filldraw[lightgray, opacity=0.2] (1, 0) -- (1, 0.5) -- (0.5, 0.75) -- (1, 1.0) -- (1, 1.75) -- (0.5, 2.0) -- (1, 2.25) -- (1, 3.0) -- (0.5, 3.25) -- (1, 3.5) -- (1, 4) -- (2, 4) -- (2, 0) -- cycle;

\draw[darkgreen, thick, dotted] (0.2, 0.6) to[out=100, in=190] (0.42, 0.75);
\draw[darkgreen, thick, mid] (0.58, 0.75) to[out=10, in=-90] (1.3, 1.375) to[out=90, in=-10] (0.58, 1.995);
\draw[darkgreen, thick, dotted] (0.42, 2.005) to[out=170, in=90] (-0.3, 1.375) to[out=-90, in=-100] (0.2, 0.9);
\node[darkgreen, right] at (1.3, 1.375){$\varphi(\alpha_1)$};
\begin{scope}[shift={(0, 1.25)}]
    \draw[darkgreen!50, thick, dotted] (0.2, 0.6) to[out=100, in=190] (0.42, 0.745);
    \draw[darkgreen!50, thick, mid] (0.58, 0.755) to[out=10, in=-90] (1.3, 1.375) to[out=90, in=-10] (0.58, 2.0);
    \draw[darkgreen!50, thick, dotted] (0.42, 2.0) to[out=170, in=90] (-0.3, 1.375) to[out=-90, in=-100] (0.2, 0.9);
    \node[darkgreen!50, right] at (1.3, 1.375){$\varphi(\alpha_2)$};
\end{scope}

\node[below] at (0.5, -0.2){$\varphi(F)$};
\end{tikzpicture}
}}
=
\vcenter{\hbox{
\begin{tikzpicture}[ 
    mid/.style={
        postaction={
            decorate,
            decoration={
                markings,
                % Mark at position 0.5 (the middle) with an arrow tip
                mark=at position 0.5 with {\arrow{>}} 
            }
        }
    },
    midback/.style={
        postaction={
            decorate,
            decoration={
                markings,
                % Mark at position 0.5 (the middle) with an arrow tip
                mark=at position 0.5 with {\arrow{<}} 
            }
        }
    },
]
\draw[darkgreen, thick, mid] (0, 0) -- (0, 0.5);
\draw (0, 0.5) to[out=180, in=-90] (-0.25, 0.75) to[out=90, in=180] (0, 1.0);
\draw[darkgreen!50, thick, mid] (0, 1.0) -- (0, 1.5);
\draw (0, 1.5) to[out=180, in=-90] (-0.25, 1.75) to[out=90, in=180] (0, 2.0);

\draw (0, 2.5) to[out=0, in=-90] (0.25, 2.75) to[out=90, in=0] (0, 3.0);
\draw (0, 3.5) to[out=0, in=-90] (0.25, 3.75) to[out=90, in=0] (0, 4.0);
\draw[darkgreen!50, thick, mid] (0, 3.0) -- (0, 3.5);
\draw[darkgreen, thick, mid] (0, 4.0) -- (0, 4.5);

\draw (0, 0) to[out=180, in=-90] (-1.25, 1.25) to[out=90, in=180] (0, 2.5);
\draw (0, 2.0) to[out=0, in=-90] (1.25, 3.25) to[out=90, in=0] (0, 4.5);

\node[darkgreen, right] at (0, 0.25){$\alpha_1$};
\node[darkgreen, left] at (0, 4.25){$\alpha_1$};
\node[darkgreen!50, right] at (0, 1.25){$\alpha_2$};
\node[darkgreen!50, left] at (0, 3.25){$\alpha_2$};

\draw[darkgreen, thick, dotted, mid] (0, 4.0) to[out=60, in=90] (0.5, 3.75) to[out=-90, in=0] (0, 3.25);
\draw[darkgreen, thick, dotted, mid] (0, 1.25) to[out=180, in=90] (-0.5, 0.75) to[out=-90, in=-120] (0, 0.5);
\draw[darkgreen!50, thick, dotted, mid] (0, 3.0) to[out=30, in=90] (0.5, 2.75) to[out=-90, in=90] (-0.5, 1.75) to[out=-90, in=-150] (0, 1.5);

\node[darkgreen!50, right] at (0.5, 2.75){$\varphi(\alpha_2)$};
\node[darkgreen, right] at (0.5, 3.75){$\varphi(\alpha_1)$};

\filldraw[red] (1.0, 3.25) circle (0.05);
\end{tikzpicture}
}}
\;
\rightsquigarrow
\;
\vcenter{\hbox{
\begin{tikzpicture}[scale=1.0]
\draw (0.5, 0) to[out=90, in=-90] (0, 2);
\draw[white, line width=5] (0, 0) to[out=90, in=-90] (0.5, 2);
\draw (0, 0) to[out=90, in=-90] (0.5, 2);

\filldraw[white, opacity=0.8] (-0.5, 1.5) to[out=0, in=-90] (0, 2) -- (0.5, 2) to[out=-90, in=0] (-0.5, 1.25) -- cycle;
\draw (0, 2) to[out=90, in=0] (-0.5, 2.5) to[out=180, in=90] (-1, 2) to[out=-90, in=180] (-0.5, 1.5) to[out=0, in=-90] (0, 2);
\draw (0.25, 2) to[out=90, in=0] (-0.5, 2.75) to[out=180, in=90] (-1.25, 2) to[out=-90, in=180] (-0.5, 1.25) to[out=0, in=-90] (0.5, 2);

\draw (0.25, 2) to[out=90, in=0] (-0.625, 3) to[out=180, in=90] (-1.5, 2) to[out=-90, in=180] (-0.7, -0.5) to[out=0, in=-90] (0, 0);
\draw (0.5, 2) to[out=90, in=0] (-0.625, 3.25) to[out=180, in=90] (-1.8, 2) to[out=-90, in=180] (-0.7, -1.0) to[out=0, in=-90] (0.5, 0);

\draw[darkgreen, ultra thick] (0, 0) -- (0.5, 0);
\draw[darkgreen!50, ultra thick] (0, 2) -- (0.5, 2);

\draw[->] (0.25, 0.1) -- (0.25, 0.3);
\draw[->] (0.11, 2.1) -- (0.07, 2.3);
\draw[->] (0.37, 2.1) -- (0.36, 2.3);

\node[below] at (-0.6, -1.2){$\KH$};
\end{tikzpicture}
}}
\]
\caption{Left: the minimal genus Seifert surface of the trefoil knot with a marking. Middle: monodromy action on the marking. Right: resulting knot holder.}
\label{fig:trefoil-knot_holder-direct_model}
\end{figure}
Start with the minimal genus Seifert surface $F$ of $K$, which is illustrated on the left of Figure \ref{fig:trefoil-knot_holder-direct_model}. 
Let us choose a marking with two arcs and a base point close to the boundary, as shown in the figure, so that $\gamma_e$ is a small meridian of $K$. 
The monodromy $\varphi : F\rightarrow F$ can be determined by pulling the arcs---considered as elastic cords in $S^3 \setminus F$ with fixed boundary points---from the front side of $F$ to the back side of $F$ \cite{BaaderGraf_fibered}. 
The middle of Figure \ref{fig:trefoil-knot_holder-direct_model} shows the new marking after this monodromy action, where we also show how this new marking looks in the disk $F \setminus (\alpha_1 \cup \alpha_2)$. 
Pushing the new marking to the boundary of this disk, we see that $\varphi(\alpha_1)$ becomes $\alpha_2$ with the opposite orientation, and that $\varphi(\alpha_2)$ splits into $\alpha_1$ and $\alpha_2$. 
In particular, the monodromy action on $H_1(F,\partial F)$ is given by
\[
\varphi
\begin{pmatrix}
\alpha_1 \\ 
\alpha_2
\end{pmatrix}
=
\begin{pmatrix}
0 & -1 \\ 
1 & 1
\end{pmatrix}
\begin{pmatrix}
\alpha_1 \\ 
\alpha_2
\end{pmatrix}.
\]
But the picture tells us much more than just the action on homology; 
by tracing exactly how the arcs split and join, we obtain the corresponding knot holder $\KH$ in $S^3 \setminus K$, shown on the right of Figure \ref{fig:trefoil-knot_holder-direct_model}. 
The resulting knot holder turns out to look like the Lorenz knot holder (Figure \ref{fig:Lorenz-knot-holder}) but with a half-twist, also known as the Smale horseshoe knot holder. 
The green lines shown on $\KH$ are the arcs $\KH \cap F = \alpha_1 \cup \alpha_2$; 
given any flow loop supported on $\KH$, its linking number with $K$ can be computed simply by looking at how many times it crosses those green lines. 
\end{eg}

\begin{eg}[Complement of braid-homogeneous links]\label{eg:braid-homogeneous-link-knot-holder}
Here, we explain how to read off a knot holder directly from any closure of a homogeneous braid. 
This will be a crucial ingredient in the proof of Theorem \ref{mainthm:flow-equals-quantum} in Section \ref{sec:BPS-q-series-from-flow-loops}. 

Recall that a braid word $\beta = \sigma_{i_1}^{\epsilon_1}\sigma_{i_2}^{\epsilon_2} \cdots \sigma_{i_k}^{\epsilon_k}$, $\epsilon_j = \pm 1$, in the standard Artin generators $\sigma_1, \cdots, \sigma_{N-1}$ is called \emph{homogeneous} if 
\begin{enumerate}
\item every $\sigma_i$ occurs at least once, and
\item for each $i$, the exponents of all occurrences of $\sigma_i$ are the same. 
\end{enumerate}
Stallings \cite{Stallings} showed that any braid-homogeneous link (i.e., any closure of homogeneous braid) is fibered, with the fiber surface $F$ given by $N$ copies of disks, one for each braid strand, connected by bands attached according to the braid word; Figure \ref{fig:braided-Seifert-surface} shows such a fiber surface in the case $\beta = \sigma_1 \sigma_2^{-1} \sigma_1 \sigma_2^{-1}$. 
That this is a fiber surface immediately follows from the fact that it is the Murasugi sum of fiber surfaces of $(2,n_i)$-torus links, $1\leq i\leq N-1$, where $n_i$ is the number of occurrences of $\sigma_i$ in $\beta$, counted with sign. 

On such a fiber surface $F$, we choose a marking as in Figure \ref{fig:braided-Seifert-surface}. 
That is, we declare that the braid axis is a repelling elliptic flow loop $\gamma_e$ (of period $N$), and draw arcs on $F$, one for each band. 
In this setup, to determine the monodromy map $\varphi : F \rightarrow F$, we consider elastic cords in $S^3 \setminus F$ with boundary on $\partial F$, given by the positive push-offs of the arcs to the front side of $F$; for positive crossings, the elastic cord would be above the band, while for negative crossings, the elastic cord would be below the band. 
There is a natural way to pull each of these elastic cords, without crossing the braid axis $\gamma_e$, to split it into a number of other arcs. 
Given an elastic cord at a crossing, we look at the region of the plane delimited by the homogeneous braid closure diagram that is directly above (resp., below) the crossing if it is a positive (resp., negative) crossing; see Figure \ref{fig:pulling-elastic-cords}. 
The region is a polygon whose vertices correspond to some crossings. 
\begin{figure}
\centering
\[
\vcenter{\hbox{
\begin{tikzpicture}[scale=0.8]
\draw (1, 0) to[out=90, in=-90] (0, 1);
\draw[white, line width=5] (0, 0) to[out=90, in=-90] (1, 1);
\draw (0, 0) to[out=90, in=-90] (1, 1);
\begin{scope}[shift={(-1, 1)}]
    \draw (1, 0) to[out=90, in=-90] (0, 1);
    \draw[white, line width=5] (0, 0) to[out=90, in=-90] (1, 1);
    \draw (0, 0) to[out=90, in=-90] (1, 1);
\end{scope}
\begin{scope}[shift={(-1, 3)}]
    \draw (1, 0) to[out=90, in=-90] (0, 1);
    \draw[white, line width=5] (0, 0) to[out=90, in=-90] (1, 1);
    \draw (0, 0) to[out=90, in=-90] (1, 1);
\end{scope}
\begin{scope}[shift={(1, 2)}]
    \draw (0, 0) to[out=90, in=-90] (1, 1);
    \draw[white, line width=5] (1, 0) to[out=90, in=-90] (0, 1);
    \draw (1, 0) to[out=90, in=-90] (0, 1);
\end{scope}
\begin{scope}[shift={(0, 4)}]
    \draw (1, 0) to[out=90, in=-90] (0, 1);
    \draw[white, line width=5] (0, 0) to[out=90, in=-90] (1, 1);
    \draw (0, 0) to[out=90, in=-90] (1, 1);
\end{scope}
\draw (0, 2) -- (0, 3);
\draw (1, 1) -- (1, 2);
\draw (1, 3) -- (1, 4);

\filldraw[darkgreen] (0.5, 0.5) circle (0.05);

\draw[darkgreen, ->] (0.5, 0.7) -- (0.5, 4.3);
\draw[darkgreen, ->] (0.5, 1.0) to[out=90, in=0] (-0.2, 1.5);
\draw[darkgreen, ->] (0.5, 2.0) to[out=90, in=180] (1.2, 2.5);
\draw[darkgreen, ->] (0.5, 3.0) to[out=90, in=0] (-0.2, 3.5);

\draw[darkgreen, thick] (0.5, 0.5) [partial ellipse = 42 : 138 : 0.25 and 0.125];
\draw[darkgreen, thick] (-0.5, 1.5) [partial ellipse = -42 : 42 : 0.25 and 0.125];
\draw[darkgreen, thick] (-0.5, 3.5) [partial ellipse = -42 : 42 : 0.25 and 0.125];
\draw[darkgreen, thick] (1.5, 2.5) [partial ellipse = 138 : 222 : 0.25 and 0.125];
\draw[darkgreen, thick] (0.5, 4.5) [partial ellipse = -42 : -138 : 0.25 and 0.125];
\end{tikzpicture}
}}
\qquad\qquad
\vcenter{\hbox{
\begin{tikzpicture}[scale=0.8]
\begin{scope}[yscale=-1]
\draw (1, 0) to[out=90, in=-90] (0, 1);
\draw[white, line width=5] (0, 0) to[out=90, in=-90] (1, 1);
\draw (0, 0) to[out=90, in=-90] (1, 1);
\begin{scope}[shift={(-1, 1)}]
    \draw (0, 0) to[out=90, in=-90] (1, 1);
    \draw[white, line width=5] (1, 0) to[out=90, in=-90] (0, 1);
    \draw (1, 0) to[out=90, in=-90] (0, 1);
\end{scope}
\begin{scope}[shift={(-1, 3)}]
    \draw (0, 0) to[out=90, in=-90] (1, 1);
    \draw[white, line width=5] (1, 0) to[out=90, in=-90] (0, 1);
    \draw (1, 0) to[out=90, in=-90] (0, 1);
\end{scope}
\begin{scope}[shift={(1, 2)}]
    \draw (1, 0) to[out=90, in=-90] (0, 1);
    \draw[white, line width=5] (0, 0) to[out=90, in=-90] (1, 1);
    \draw (0, 0) to[out=90, in=-90] (1, 1);
\end{scope}
\begin{scope}[shift={(0, 4)}]
    \draw (1, 0) to[out=90, in=-90] (0, 1);
    \draw[white, line width=5] (0, 0) to[out=90, in=-90] (1, 1);
    \draw (0, 0) to[out=90, in=-90] (1, 1);
\end{scope}
\draw (0, 2) -- (0, 3);
\draw (1, 1) -- (1, 2);
\draw (1, 3) -- (1, 4);

\filldraw[darkgreen] (0.5, 0.5) circle (0.05);

\draw[darkgreen, ->] (0.5, 0.7) -- (0.5, 4.3);
\draw[darkgreen, ->] (0.5, 1.0) to[out=90, in=0] (-0.2, 1.5);
\draw[darkgreen, ->] (0.5, 2.0) to[out=90, in=180] (1.2, 2.5);
\draw[darkgreen, ->] (0.5, 3.0) to[out=90, in=0] (-0.2, 3.5);

\draw[darkgreen, thick] (0.5, 0.5) [partial ellipse = 42 : 138 : 0.25 and 0.125];
\draw[darkgreen, thick] (-0.5, 1.5) [partial ellipse = -42 : 42 : 0.25 and 0.125];
\draw[darkgreen, thick] (-0.5, 3.5) [partial ellipse = -42 : 42 : 0.25 and 0.125];
\draw[darkgreen, thick] (1.5, 2.5) [partial ellipse = 138 : 222 : 0.25 and 0.125];
\draw[darkgreen, thick] (0.5, 4.5) [partial ellipse = -42 : -138 : 0.25 and 0.125];
\end{scope}
\end{tikzpicture}
}}
\]
\caption{Pulling and splitting elastic cords}
\label{fig:pulling-elastic-cords}
\end{figure}
We can then pull the elastic cord across this polygon, splitting it into a number of elastic cords corresponding to the other vertices of the polygon, as shown in Figure \ref{fig:pulling-elastic-cords}.  
Note that the new elastic cords are at the back side of $F$, except for the ones which move to the right column, which are still at the front side of $F$. 
For the ones which move to the right column, we can repeat this process, and this process eventually terminates, as there are only $N-1$ columns in the braid. 
In this way, we can pull each of the elastic cords from the front side of $F$ to the back side of $F$, showing directly that $F$ is a fiber surface, according to the fiberedness criterion of \cite{BaaderGraf_fibered}.\footnote{For the purpose of getting a knot holder, we wouldn't need to repeat the process though, as they will lead to equivalent knot holders, under the second move in \cite[Fig. 2.4]{BirmanWilliams}, which is reproduced in Figure \ref{fig:knot-holder_equivalence}.} 
\begin{figure}
\centering
\[
\vcenter{\hbox{
\begin{tikzpicture}[scale=-0.8]
\draw (0, 0) to[out=90, in=-90] (-0.7, 2);
\draw (1, 0) to[out=90, in=-90] (0.3, 2);
\draw[snake it] (-0.7, 2) -- (0.3, 2);

\filldraw[white, opacity=0.9] (0, 0) to[out=90, in=-90] (0.7, 2) -- (1.7, 2) to[out=-90, in=90] (1, 0) -- cycle;
\draw (0, 0) to[out=90, in=-90] (0.7, 2);
\draw (1, 0) to[out=90, in=-90] (1.7, 2);
\draw[snake it] (0.7, 2) -- (1.7, 2);

\draw[very thick] (0, 0) -- (1, 0);

\draw (0, 0) to[out=-90, in=90] (-0.5, -2);
\draw (0.5, -0.5) to[out=-90, in=90] (0, -2);
\draw[snake it] (-0.5, -2) -- (0, -2);

\draw (1, 0) to[out=-90, in=90] (1.5, -2);
\draw (0.5, -0.5) to[out=-90, in=90] (1, -2);
\draw[snake it] (1.5, -2) -- (1, -2);

\draw[->, thick] (-0.2, 1.9) -- (-0.1, 1.5);
\draw[->, thick] (1.2, 1.9) -- (1.1, 1.5);

\draw[->, thick] (-0.15, -1.5) -- (-0.25, -1.8);
\draw[->, thick] (1.15, -1.5) -- (1.25, -1.8);
\end{tikzpicture}
}}
\quad\sim\quad
\vcenter{\hbox{
\begin{tikzpicture}[scale=-0.8]
\draw (0, 0) to[out=90, in=-90] (-0.7, 2);
\draw (1, 0) to[out=90, in=-90] (0.3, 2);
\draw[snake it] (-0.7, 2) -- (0.3, 2);

\draw (0.4, 0) to[out=90, in=-75] (0.35, 0.6);
\draw (0.6, 0) to[out=90, in=-75] (0.35, 0.6);

\filldraw[white, opacity=0.9] (0, 0) to[out=90, in=-90] (0.7, 2) -- (1.7, 2) to[out=-90, in=90] (1, 0) -- (0.6, 0) to[out=90, in=-105] (0.65, 0.6) to[out=-105, in=90] (0.4, 0) -- cycle;
\draw (0, 0) to[out=90, in=-90] (0.7, 2);
\draw (1, 0) to[out=90, in=-90] (1.7, 2);
\draw[snake it] (0.7, 2) -- (1.7, 2);

\draw (0.4, 0) to[out=90, in=-105] (0.65, 0.6);
\draw (0.6, 0) to[out=90, in=-105] (0.65, 0.6);

\draw[very thick] (0, 0) -- (0.4, 0);
\draw[very thick] (0.6, 0) -- (1, 0);

\draw (0, 0) to[out=-90, in=90] (-0.5, -2);
\draw (0.4, 0) to[out=-90, in=90] (0, -2);
\draw[snake it] (-0.5, -2) -- (0, -2);

\draw (1, 0) to[out=-90, in=90] (1.5, -2);
\draw (0.6, 0) to[out=-90, in=90] (1, -2);
\draw[snake it] (1.5, -2) -- (1, -2);

\draw[->, thick] (-0.2, 1.9) -- (-0.1, 1.5);
\draw[->, thick] (1.2, 1.9) -- (1.1, 1.5);

\draw[->, thick] (-0.15, -1.5) -- (-0.25, -1.8);
\draw[->, thick] (1.15, -1.5) -- (1.25, -1.8);
\end{tikzpicture}
}}
\]
\caption{Equivalent knot holders; they have the same periodic orbits.}
\label{fig:knot-holder_equivalence}
\end{figure}
What is more, since we have an explicit description of how the elastic cords are dragged around, by drawing the locus the elastic cords sweep around, we obtain a knot holder $\KH$ for the braid-homogeneous link that can be directly read off from the homogeneous braid; see Figure \ref{fig:braid-homogeneous-knot-holder}. 
The arcs $\KH \cap F$ (shown in green in the right of Figure \ref{fig:braid-homogeneous-knot-holder}) correspond to the crossings of the braid $\beta$, and the way the strips split and join directly models the way the elastic cords do.

\begin{figure}
\centering
\[
\vcenter{\hbox{
\begin{tikzpicture}
\draw (1, 0) to[out=90, in=-90] (0, 1);
\draw[white, line width=5] (0, 0) to[out=90, in=-90] (1, 1);
\draw (0, 0) to[out=90, in=-90] (1, 1) to[out=90, in=-90] (2, 2);
\draw[white, line width=5] (2, 1) to[out=90, in=-90] (1, 2);
\draw (2, 0) -- (2, 1) to[out=90, in=-90] (1, 2) to[out=90, in=-90] (0, 3) -- (0, 4);
\draw[white, line width=5] (0, 2) to[out=90, in=-90] (1, 3);
\draw (0, 1) -- (0, 2) to[out=90, in=-90] (1, 3) to[out=90, in=-90] (2, 4);
\draw[white, line width=5] (2, 3) to[out=90, in=-90] (1, 4);
\draw (2, 2) -- (2, 3) to[out=90, in=-90] (1, 4);

\node[below] at (1, -0.1){$\beta$};
\end{tikzpicture}
}}
\quad\rightsquigarrow\quad
\vcenter{\hbox{
\begin{tikzpicture}
\draw[darkgreen, ->] (0.5, 0) -- (0.5, 4);
\draw[darkgreen, <-] (1.5, 0) -- (1.5, 4);
\draw[darkgreen] (1.5, 0.8) to[out=-90, in=-90] (0.5, 0.8);
\draw[darkgreen] (1.5, 2.8) to[out=-90, in=-90] (0.5, 2.8);
\draw (1, 0) to[out=90, in=-90] (0, 1);
\draw[white, line width=5] (0, 0) to[out=90, in=-90] (1, 1);
\draw (0, 0) to[out=90, in=-90] (1, 1) to[out=90, in=-90] (2, 2);
\draw[white, line width=5] (2, 1) to[out=90, in=-90] (1, 2);
\draw (2, 0) -- (2, 1) to[out=90, in=-90] (1, 2) to[out=90, in=-90] (0, 3) -- (0, 4);
\draw[white, line width=5] (0, 2) to[out=90, in=-90] (1, 3);
\draw (0, 1) -- (0, 2) to[out=90, in=-90] (1, 3) to[out=90, in=-90] (2, 4);
\draw[white, line width=5] (2, 3) to[out=90, in=-90] (1, 4);
\draw (2, 2) -- (2, 3) to[out=90, in=-90] (1, 4);
\draw[white, line width=5] (0.5, 1.2) to[out=90, in=90] (1.5, 1.2);
\draw[darkgreen] (0.5, 1.2) to[out=90, in=90] (1.5, 1.2);
\draw[white, line width=5] (0.5, 3.2) to[out=90, in=90] (1.5, 3.2);
\draw[darkgreen] (0.5, 3.2) to[out=90, in=90] (1.5, 3.2);
\draw[darkgreen] (0.5, 1) -- (0.5, 2);
\draw[darkgreen] (0.5, 3) -- (0.5, 4);

\node[below] at (1, -0.5){$\;$};
\end{tikzpicture}
}}
\quad\rightsquigarrow\quad
\vcenter{\hbox{
\begin{tikzpicture}
\draw (1.3, 0) -- (1.3, 0.3) to[out=-150, in=-90] (0.3, 1);
% \filldraw[white, opacity=0.8] (0.3, 0) to[out=90, in=-90] (0.7, 1) -- (0.3, 1) -- cycle;
\draw (1.7, 0) -- (1.7, 1);
\draw (0.7, 0) to[out=90, in=-90] (0.3, 1);
\draw[white, line width=4, opacity=0.9] (0.3, 0) to[out=90, in=-90] (0.7, 1);
\draw (0.3, 0) to[out=90, in=-90] (0.7, 1);
\draw (1.3, 1) -- (1.3, 0.7) to[out=-120, in=-90] (0.7, 1);

\draw (1.3, 1) to[out=90, in=-90] (1.7, 2);
\draw[white, line width=4, opacity=0.9] (1.7, 1) to[out=90, in=-90] (1.3, 2);
\draw (1.7, 1) to[out=90, in=-90] (1.3, 2);
\draw (0.3, 1) -- (0.3, 2);
\filldraw[white, opacity=0.9] (0.7, 1.7) to[out=30, in=90] (1.7, 1) -- (1.3, 1) to[out=90, in=60] (0.7, 1.3) -- cycle;
\draw (0.7, 1) -- (0.7, 1.3) to[out=60, in=90] (1.3, 1);
\draw (0.7, 2) -- (0.7, 1.7) to[out=30, in=90] (1.7, 1);
\begin{scope}[shift={(0, 2)}]
    \draw (1.3, 0) -- (1.3, 0.3) to[out=-150, in=-90] (0.3, 1);
    % \filldraw[white, opacity=0.8] (0.3, 0) to[out=90, in=-90] (0.7, 1) -- (0.3, 1) -- cycle;
    \draw (1.7, 0) -- (1.7, 1);
    \draw (0.7, 0) to[out=90, in=-90] (0.3, 1);
    \draw[white, line width=4, opacity=0.9] (0.3, 0) to[out=90, in=-90] (0.7, 1);
    \draw (0.3, 0) to[out=90, in=-90] (0.7, 1);
    \draw (1.3, 1) -- (1.3, 0.7) to[out=-120, in=-90] (0.7, 1);
    
    \draw (1.3, 1) to[out=90, in=-90] (1.7, 2);
    \draw[white, line width=4, opacity=0.9] (1.7, 1) to[out=90, in=-90] (1.3, 2);
    \draw (1.7, 1) to[out=90, in=-90] (1.3, 2);
    \draw (0.3, 1) -- (0.3, 2);
    \filldraw[white, opacity=0.9] (0.7, 1.7) to[out=30, in=90] (1.7, 1) -- (1.3, 1) to[out=90, in=60] (0.7, 1.3) -- cycle;
    \draw (0.7, 1) -- (0.7, 1.3) to[out=60, in=90] (1.3, 1);
    \draw (0.7, 2) -- (0.7, 1.7) to[out=30, in=90] (1.7, 1);
\end{scope}

\draw[ultra thick, darkgreen] (0.3, 1) -- (0.7, 1);
\draw[ultra thick, darkgreen] (0.3, 3) -- (0.7, 3);
\draw[very thick] (1.3, 1) -- (1.7, 1);
\draw[ultra thick, darkgreen] (1.3, 2) -- (1.7, 2);
\draw[very thick] (1.3, 3) -- (1.7, 3);
\draw[ultra thick, darkgreen] (1.3, 4) -- (1.7, 4);

\draw[->] (0.5, 1.1) -- (0.5, 1.3);
\draw[->] (0.5, 3.1) -- (0.5, 3.3);
\draw[->] (1.5, 1.9) -- (1.5, 1.7);
\draw[->] (1.5, 3.9) -- (1.5, 3.7);

\node[below] at (1, -0.2){$\KH$};
\end{tikzpicture}
}}
\]
\caption{Reading off a knot holder from a homogeneous braid. Left: a homogeneous braid. Middle: how the elastic cords move around. Right: the resulting knot holder.}
\label{fig:braid-homogeneous-knot-holder}
\end{figure}

\end{eg}

\begin{rmk}
There are other situations in which the construction above can be carried out purely combinatorially. 
For instance, a signed\footnote{Here, ``signed'' means each layered ideal tetrahedron carries a sign.} layered ideal triangulation of a mapping torus determines a movie of ideal triangulations on the fiber. 
We can take the edges of each ideal triangulation on the fiber as the arcs of a marking, with a base point at the center of each ideal triangle. 
The signed flips determine the corresponding splitting and joining moves (Figure~\ref{fig:flip-knot-holder}), and hence a canonical knot holder. 
\begin{figure}
\centering
\[
\vcenter{\hbox{
\begin{tikzpicture}
\draw[darkgreen, thick] (1, 1) -- (1, -1);
\draw[darkgreen, thick] (-1, 1) -- (-1, -1);
\draw[darkgreen, thick] (-1, 1) -- (1, 1);
\draw[darkgreen, thick] (-1, -1) -- (1, -1);
\draw[darkgreen, thick] (-1, 1) -- (1, -1);
\filldraw[white] (1, 1) circle (0.2);
\filldraw[white] (-1, 1) circle (0.2);
\filldraw[white] (-1, -1) circle (0.2);
\filldraw[white] (1, -1) circle (0.2);
\draw (1, 1) [partial ellipse = -90 : -180 : 0.2];
\draw (-1, 1) [partial ellipse = 0 : -90 : 0.2];
\draw (-1, -1) [partial ellipse = 0 : 90 : 0.2];
\draw (1, -1) [partial ellipse = 90 : 180 : 0.2];
\filldraw[red] (0.4, 0.4) circle (0.05);
\filldraw[red] (-0.4, -0.4) circle (0.05);
\end{tikzpicture}
}}
\overset{\text{isotopy}}{\rightsquigarrow}
\vcenter{\hbox{
\begin{tikzpicture}
\draw[darkgreen, thick] (1, 1) -- (1, -1);
\draw[darkgreen, thick] (-1, 1) -- (-1, -1);
\draw[darkgreen, thick] (-1, 1) -- (1, 1);
\draw[darkgreen, thick] (-1, -1) -- (1, -1);
\draw[darkgreen, thick] (-1, 1) .. controls (-0.4, -1.5) and (0.4, 1.5) .. (1, -1); 
\filldraw[white] (1, 1) circle (0.2);
\filldraw[white] (-1, 1) circle (0.2);
\filldraw[white] (-1, -1) circle (0.2);
\filldraw[white] (1, -1) circle (0.2);
\draw (1, 1) [partial ellipse = -90 : -180 : 0.2];
\draw (-1, 1) [partial ellipse = 0 : -90 : 0.2];
\draw (-1, -1) [partial ellipse = 0 : 90 : 0.2];
\draw (1, -1) [partial ellipse = 90 : 180 : 0.2];
\filldraw[red] (0.4, -0.4) circle (0.05);
\filldraw[red] (-0.4, 0.4) circle (0.05);
\end{tikzpicture}
}}
\overset{\text{split}}{\rightsquigarrow}
\vcenter{\hbox{
\begin{tikzpicture}
\draw[darkgreen, thick] (1, 1) -- (1, -1);
\draw[darkgreen, thick] (-1, 1) -- (-1, -1);
\draw[darkgreen, thick] (-1, 1) -- (1, 1);
\draw[darkgreen, thick] (-1, -1) -- (1, -1);
\draw[darkgreen, thick] (-1, -1) -- (1, 1); 
\draw[darkgreen, thick] (-1, 1) to[out=-70, in=70]  (-1, -1); 
\draw[darkgreen, thick] (1, 1) to[out=-110, in=110]  (1, -1); 
\filldraw[white] (1, 1) circle (0.2);
\filldraw[white] (-1, 1) circle (0.2);
\filldraw[white] (-1, -1) circle (0.2);
\filldraw[white] (1, -1) circle (0.2);
\draw (1, 1) [partial ellipse = -90 : -180 : 0.2];
\draw (-1, 1) [partial ellipse = 0 : -90 : 0.2];
\draw (-1, -1) [partial ellipse = 0 : 90 : 0.2];
\draw (1, -1) [partial ellipse = 90 : 180 : 0.2];
\filldraw[red] (0.4, -0.4) circle (0.05);
\filldraw[red] (-0.4, 0.4) circle (0.05);
\end{tikzpicture}
}}
\overset{\text{join}}{\rightsquigarrow}
\vcenter{\hbox{
\begin{tikzpicture}
\draw[darkgreen, thick] (1, 1) -- (1, -1);
\draw[darkgreen, thick] (-1, 1) -- (-1, -1);
\draw[darkgreen, thick] (-1, 1) -- (1, 1);
\draw[darkgreen, thick] (-1, -1) -- (1, -1);
\draw[darkgreen, thick] (-1, -1) -- (1, 1); 
\filldraw[white] (1, 1) circle (0.2);
\filldraw[white] (-1, 1) circle (0.2);
\filldraw[white] (-1, -1) circle (0.2);
\filldraw[white] (1, -1) circle (0.2);
\draw (1, 1) [partial ellipse = -90 : -180 : 0.2];
\draw (-1, 1) [partial ellipse = 0 : -90 : 0.2];
\draw (-1, -1) [partial ellipse = 0 : 90 : 0.2];
\draw (1, -1) [partial ellipse = 90 : 180 : 0.2];
\filldraw[red] (0.4, -0.4) circle (0.05);
\filldraw[red] (-0.4, 0.4) circle (0.05);
\end{tikzpicture}
}}
\]
\caption{Splitting and joining corresponding to a positive flip; for a negative flip, the two red base points braid negatively.}
\label{fig:flip-knot-holder}
\end{figure}

Another example comes from Rampichini diagrams \cite{MortonRampichini} for $P$-fibered links (i.e., braided open book decompositions of $S^3$ \cite{Bode}). 
A vertical slice of a Rampichini diagram determines a band word presentation and hence a braided fiber surface, and the movie of these band words gives a visual movie of markings. 
Applying the construction above produces a canonical knot holder associated to the Rampichini diagram. 
For braid-homogeneous links, this recovers the knot holder of Example~\ref{eg:braid-homogeneous-link-knot-holder}. 
In forthcoming work \cite{Park_P-fibered}, we use this construction to study Conjecture~\ref{conj:flow-equals-quantum} for fibered knots beyond the braid-homogeneous case. 
\end{rmk}

\subsection{Framing adapted to knot holders}\label{subsec:adapted-framing}
Now that we have seen how to obtain knot holders in practice, we would like to discuss how to compute the $\mathrm{GL}_1$-skein-valued flow loop count (Definition \ref{defn:skein-valued-flow-loop-count}) from the knot holders. 
As in Corollary \ref{cor:knot-holder-existence}, let $\KH \subset S^3 \setminus K$ be a knot holder supporting all the hyperbolic flow loops, and $\gamma_e \subset (S^3 \setminus K) \setminus \KH$ be the oriented link of elliptic flow loops. 
In order to count these flow loops in the $\mathrm{GL}_1$-skein module, we need to choose a framing, as in Section \ref{subsec:framing}. 
For the purpose of obtaining a simple state sum model, we would in particular want the framing induced on flow loops to be the same as the one from the tangent direction of the knot holder.\footnote{The knot holder may not be orientable, so here we are allowing half-twists; see Remark \ref{rmk:half-twist}. By shifting the framing lines off the knot holder, we can obtain genuine framings without half-twists, but that's less convenient, as it would be different from the one induced by the knot holder.} 

Such a framing can be constructed as follows. 
On a marked fiber surface $F$, we set the perturbation vector field $\xi_{\mathrm{pert}}$ to be the one coming from a Morse function associated to the marking on the surface, as in Remark \ref{rmk:marking-Morse-function}. 
That is, we pick an interior point $q_i$ from each arc $\alpha_i$ in the marking of $F$, and choose a Morse function on $F$ for which red base points $p_1, \cdots, p_m$ are the local maxima, the chosen interior points $q_1, \cdots, q_n$ are the saddle points, and the arc $\alpha_i$ is the descending manifold from the saddle $q_i$. 
Then, in the mapping cylinder $F\times [0, 1]$ of a monodromy $\varphi : F \rightarrow F$, $\{p_1, \cdots, p_m\} \times [0, 1]$ are the elliptic framing lines, and $\{q_1, \cdots, q_n\} \times [0, 1]$ are the hyperbolic framing lines. 

It remains to describe how to close up the mapping cylinder; how the framing lines behave under splitting and joining of the arcs. 
This is depicted in Figure \ref{fig:splitting-joining-with-framing-lines}. 
\begin{figure}
\centering
\[
\vcenter{\hbox{
\begin{tikzpicture}
\draw (-1, 0) -- (1, 0);
\draw[darkgreen, thick] ({-sqrt(3)/2}, 0.5) -- ({sqrt(3)/2}, 0.5);
\draw[dotted] (-1, 0) arc (180:0:1); 
\filldraw[green] (0.685, 0.5) circle (0.05);
\end{tikzpicture}
}}
\rightsquigarrow
\vcenter{\hbox{
\begin{tikzpicture}
\coordinate (a) at ({-sqrt(3)/2}, 0.5);
\coordinate (b) at ({sqrt(3)/2}, 0.5);
\draw (-1, 0) -- (1, 0);
\draw[darkgreen, thick] (a) to[out=-30, in=120] (-0.2, 0);
\draw[darkgreen, thick] (0.2, 0) to[out=60, in=-150] (b);
\draw[dotted] (-1, 0) arc (180:0:1); 
\filldraw[green] (0.685, 0.4) circle (0.05);
\filldraw[green] (-0.37, 0.2) circle (0.05);
\filldraw[red] (-0.5, 0.1) circle (0.05);
\end{tikzpicture}
}}
\qquad\qquad
\vcenter{\hbox{
\begin{tikzpicture}
\draw (-0.5, 0.7) -- (0.5, 0.7);
\draw (-0.5, -0.7) -- (0.5, -0.7);
\draw[dotted] (-0.5, 0.7) -- (-0.5, -0.7);
\draw[dotted] (0.5, 0.7) -- (0.5, -0.7);
\draw[darkgreen, thick] (-0.2, 0.7) -- (-0.2, -0.7);
\draw[darkgreen, thick] (0.2, 0.7) -- (0.2, -0.7);
\filldraw[green] (-0.2, 0) circle (0.05);
\filldraw[green] (0.2, 0) circle (0.05);
\filldraw[red] (0, 0) circle (0.05);
\end{tikzpicture}
}}
\rightsquigarrow
\vcenter{\hbox{
\begin{tikzpicture}
\draw (-0.5, 0.7) -- (0.5, 0.7);
\draw (-0.5, -0.7) -- (0.5, -0.7);
\draw[dotted] (-0.5, 0.7) -- (-0.5, -0.7);
\draw[dotted] (0.5, 0.7) -- (0.5, -0.7);
\draw[darkgreen, thick] (0, 0.7) -- (0, -0.7);
\filldraw[green] (0, 0) circle (0.05);
\end{tikzpicture}
}}
\]
\caption{Splitting (left) and joining (right), with framing lines (red: elliptic, green: hyperbolic)}
\label{fig:splitting-joining-with-framing-lines}
\end{figure}
\begin{itemize}
\item As an arc splits into two, a pair of a saddle point and a local maximum gets created, near the splitting point; see the left of Figure \ref{fig:splitting-joining-with-framing-lines}. 
Up to isotopy, the movie of critical points under splitting is uniquely determined as follows. 
After splitting, $F$ minus the arcs acquires an extra disk component which does not already contain a local maximum. 
A new local maximum is created in the interior of that disk component, and a new saddle point is created in the arc component neighboring that disk. 
The original saddle point on the arc follows along the other component of the arc after splitting. 
\item As two arcs join, two saddle points sandwich a local maximum and combine into a single saddle; see the right of Figure \ref{fig:splitting-joining-with-framing-lines}. 
\end{itemize}
The movie of these saddle points and local maxima determines the framing line
\[
\ell_{\mathrm{fr}} = \gamma_e \cup \gamma_h \cup \ell_{\mathrm{pair}}, 
\]
where $\gamma_e$ is the movie of the local maxima $\{p_1, \cdots, p_m\}$ (which coincide with the elliptic flow loops), $\gamma_h$ is the movie of saddle points $\{q_1, \cdots, q_n\}$, and $\ell_{\mathrm{pair}}$ is the movie of pairs of saddle points and local maxima that get created and annihilated under splitting and joining. 
Note that all the hyperbolic framing lines are on the knot holder, while all the elliptic framing lines are off the knot holder. 

Now that we have a perturbation vector field $\xi_{\mathrm{pert}}$ which is, away from the framing lines, everywhere tangent to the knot holder $\KH$, the framings induced on flow loops are the same as the ones induced by the knot holder. 
We can thus identify the $d$-fold multiple cover of any primitive hyperbolic flow loop $\gamma \subset \KH$ with $d$ parallel copies of $\gamma$ (parallel in the knot holder), in the $\mathrm{GL}_1$-skein module. 
This allows us to reformulate the $\mathrm{GL}_1$-skein-valued flow loop count in terms of a simple state sum model on the knot holder, which we describe below.

\subsection{State sum on knot holders}\label{subsec:knot-holder-state-sum}
Let $\KH$ be a knot holder, $\gamma_e \subset (S^3 \setminus K) \setminus \KH$ the elliptic flow loops, and $\ell_{\mathrm{fr}}$ the framing lines as above. 
Then, by a \emph{state}, we mean an assignment of $0$ or $1$ to each component of $\gamma_e$, together with an assignment of a nonnegative integer to each strip of the knot holder $\KH$, satisfying the admissibility condition that the sum of the numbers assigned to the incoming strips is equal to the sum of the numbers assigned to the outgoing strips, for any splitting or joining chart. 
Let $\mathcal{S}(\KH, \gamma_e)$ denote the set of all such states. 

Any multi-flow loop $\gamma \in \mathcal{O}$ (in the sense of Section \ref{subsec:setup}) determines a state $\sigma(\gamma) \in \mathcal{S}(\KH, \gamma_e)$, by counting the number of times $\gamma$ passes through each strip of the knot holder, and by recording which components of $\gamma_e$ are contained in $\gamma$. 
On the other hand, for any given state $\sigma \in \mathcal{S}(\KH, \gamma_e)$, there are only finitely many multi-flow loops $\gamma \in \mathcal{O}$ for which $\sigma(\gamma) = \sigma$. 
Such multi-flow loops can be easily determined. 
For each strip of the knot holder, if the assigned integer is $a \in \mathbb{Z}_{\geq 0}$, then we simply draw $a$ parallel strands on that strip, along the direction of the semiflow of the knot holder. 
These parallel strands can be uniquely connected across the splitting charts. 
The only non-trivial things can happen near the joining charts. 
If there are $b$ and $c$ incoming strands and $a = b+c$ outgoing strands in a joining chart, then there are $\binom{b+c}{b}$ number of ways to connect these strands across the joining chart. 
Once we have connected all the strands across all the joining charts, we get a multi-flow loop $\gamma \in \mathcal{O}$, and these are all the multi-flow loops with a given state. 

We can now repackage the flow loop count, which was defined in Definition \ref{defn:skein-valued-flow-loop-count} as a sum over all multi-flow loops $\gamma \in \mathcal{O}$, into a sum over all states:
\[
\Phi_{S^3 \setminus K}(x,q) = \sum_{\sigma \in \mathcal{S}(\KH, \gamma_e)} w(\sigma).
\]
Here, the weight $w(\sigma)$ is simply the sum of weights of all the multi-flow loops $\gamma$ for which $\sigma(\gamma) = \sigma$:
\[
w(\sigma) = 
\sum_{\substack{\gamma \in \mathcal{O} \\ \sigma(\gamma) = \sigma}} 
(-1)^{\mathrm{e}(\gamma) + \mathrm{nh}(\gamma)} 
q^{\mathrm{lk}(\gamma, \gamma) - \mathrm{lk}(\gamma, \ell_{\mathrm{fr}})}
x^{\deg \gamma}. 
\]
What makes this state sum particularly nice is that all the terms in the weight $w(\sigma)$ except for the self-linking number $\mathrm{lk}(\gamma,\gamma)$ depend only on the state $\sigma$, not on a particular multi-flow loop $\gamma$ representing it. 
Moreover, the sum $\sum_{\sigma(\gamma) = \sigma} q^{\mathrm{lk}(\gamma,\gamma)}$ is given by a product of $q$-binomials. 

More explicitly, the weight $w(\sigma)$ is determined by the product of the following local factors:
\begin{itemize}
\item Splitting:
\[
\vcenter{\hbox{
\begin{tikzpicture}[scale=0.8]
\begin{scope}[yscale=-1, xscale=-1]

\draw (0, 0.7) to[out=-90, in=90] (-0.5, -2);
\draw (0.5, -0.5) to[out=-100, in=90] (0, -2);
\draw[snake it] (-0.5, -2) -- (0, -2);

\draw (1, 0.7) to[out=-90, in=90] (1.5, -2);
\draw (0.5, -0.5) to[out=-80, in=90] (1, -2);
\draw[snake it] (1.5, -2) -- (1, -2);

\draw[snake it] (0, 0.7) -- (1, 0.7);

\draw[->, thick] (-0.15, -1.5) -- (-0.25, -1.8);
\draw[->, thick] (1.15, -1.5) -- (1.25, -1.8);

\draw[->, thick] (0.5, 0.6) -- (0.5, 0.3);

\node[below] at (0.5, 0.7){$a$};
\node[above] at (1.25, -2){$b$};
\node[above] at (-0.25, -2){$c$};
\end{scope}
\end{tikzpicture}
}}
\rightsquigarrow
\delta_{a, b+c}.
\]
That is, no extra factor from splitting charts, other than imposing the admissibility condition. 
\item Joining:
\[
\vcenter{\hbox{
\begin{tikzpicture}[scale=0.8]
\begin{scope}[yscale=-1, xscale=-1]
\draw (0, 0) to[out=90, in=-90] (-0.7, 2);
\draw (1, 0) to[out=90, in=-90] (0.3, 2);
\draw[snake it] (-0.7, 2) -- (0.3, 2);

\filldraw[white, opacity=0.9] (0, 0) to[out=90, in=-90] (0.7, 2) -- (1.7, 2) to[out=-90, in=90] (1, 0) -- cycle;
\draw (0, 0) to[out=90, in=-90] (0.7, 2);
\draw (1, 0) to[out=90, in=-90] (1.7, 2);
\draw[snake it] (0.7, 2) -- (1.7, 2);

\draw[very thick] (0, 0) -- (1, 0);

\draw (0, 0) -- (0, -1.3);
\draw (1, 0) -- (1, -1.3);
\draw[snake it] (0, -1.3) -- (1, -1.3);

\draw[->, thick] (-0.2, 1.9) -- (-0.1, 1.5);
\draw[->, thick] (1.2, 1.9) -- (1.1, 1.5);
\draw[->, thick] (0.5, -0.7) -- (0.5, -1.1);

\node[above] at (0.5, -1.3){$a$};
\node[below] at (1.2, 2){$b$};
\node[below] at (-0.2, 2){$c$};
\end{scope}
\end{tikzpicture}
}}
\rightsquigarrow
\delta_{a,b+c} \qbin{b+c}{b}_q
,\qquad
\vcenter{\hbox{
\begin{tikzpicture}[scale=0.8]
\begin{scope}[yscale=-1]
\draw (0, 0) to[out=90, in=-90] (-0.7, 2);
\draw (1, 0) to[out=90, in=-90] (0.3, 2);
\draw[snake it] (-0.7, 2) -- (0.3, 2);

\filldraw[white, opacity=0.9] (0, 0) to[out=90, in=-90] (0.7, 2) -- (1.7, 2) to[out=-90, in=90] (1, 0) -- cycle;
\draw (0, 0) to[out=90, in=-90] (0.7, 2);
\draw (1, 0) to[out=90, in=-90] (1.7, 2);
\draw[snake it] (0.7, 2) -- (1.7, 2);

\draw[very thick] (0, 0) -- (1, 0);

\draw (0, 0) -- (0, -1.3);
\draw (1, 0) -- (1, -1.3);
\draw[snake it] (0, -1.3) -- (1, -1.3);

\draw[->, thick] (-0.2, 1.9) -- (-0.1, 1.5);
\draw[->, thick] (1.2, 1.9) -- (1.1, 1.5);
\draw[->, thick] (0.5, -0.7) -- (0.5, -1.1);

\node[above] at (0.5, -1.3){$a$};
\node[below] at (1.2, 2){$c$};
\node[below] at (-0.2, 2){$b$};
\end{scope}
\end{tikzpicture}
}}
\rightsquigarrow
\delta_{a,b+c} \qbin{b+c}{b}_{q^{-1}},
\]
where the $q$-binomial coefficients are defined by
\[
\qbin{n}{k}_q := \frac{[n]_q!}{[k]_q![n-k]_q!},\quad [n]_q! := [n]_q [n-1]_q \cdots [1]_q,\quad [n]_q := \frac{1-q^n}{1-q}.
\]
Such $q$-binomial factors arise from summing $q^{\mathrm{lk}(\gamma,\gamma)}$ over all possible ways to connect the strands across the joining charts. 
Note that we are using a particular planar diagram of the knot holder to determine whether the crossings are positive or negative under joining. 
\item Crossings and half-twists: 
\[
\vcenter{\hbox{
\begin{tikzpicture}
\draw (1, 0) to[out=90, in=-90] (-0.3, 2);
\draw (0.3, 0) to[out=90, in=-90] (-1, 2);
\filldraw[white, opacity=0.9] (-1, 0) to[out=90, in=-90] (0.3, 2) -- (1, 2) to[out=-90, in=90] (-0.3, 0) -- cycle;
\draw (-1, 0) to[out=90, in=-90] (0.3, 2);
\draw (-0.3, 0) to[out=90, in=-90] (1, 2);
\draw[snake it] (-1, 0) -- (-0.3, 0);
\draw[snake it] (1, 0) -- (0.3, 0);
\draw[snake it] (-1, 2) -- (-0.3, 2);
\draw[snake it] (1, 2) -- (0.3, 2);

\draw[->, thick] (0.6, 0.2) -- (0.5, 0.5);
\draw[->, thick] (-0.6, 0.2) -- (-0.5, 0.5);
\draw[<-, thick] (0.6, 1.8) -- (0.5, 1.5);
\draw[<-, thick] (-0.6, 1.8) -- (-0.5, 1.5);

\node[above] at (0.65, 2){$a$};
\node[above] at (-0.65, 2){$b$};
\node[below] at (0.65, 0){$b$};
\node[below] at (-0.65, 0){$a$};
\end{tikzpicture}
}}
\rightsquigarrow
q^{ab}
,\qquad
\vcenter{\hbox{
\begin{tikzpicture}[xscale=-1]
\draw (1, 0) to[out=90, in=-90] (-0.3, 2);
\draw (0.3, 0) to[out=90, in=-90] (-1, 2);
\filldraw[white, opacity=0.9] (-1, 0) to[out=90, in=-90] (0.3, 2) -- (1, 2) to[out=-90, in=90] (-0.3, 0) -- cycle;
\draw (-1, 0) to[out=90, in=-90] (0.3, 2);
\draw (-0.3, 0) to[out=90, in=-90] (1, 2);
\draw[snake it] (-1, 0) -- (-0.3, 0);
\draw[snake it] (1, 0) -- (0.3, 0);
\draw[snake it] (-1, 2) -- (-0.3, 2);
\draw[snake it] (1, 2) -- (0.3, 2);

\draw[->, thick] (0.6, 0.2) -- (0.5, 0.5);
\draw[->, thick] (-0.6, 0.2) -- (-0.5, 0.5);
\draw[<-, thick] (0.6, 1.8) -- (0.5, 1.5);
\draw[<-, thick] (-0.6, 1.8) -- (-0.5, 1.5);

\node[above] at (0.65, 2){$b$};
\node[above] at (-0.65, 2){$a$};
\node[below] at (0.65, 0){$a$};
\node[below] at (-0.65, 0){$b$};
\end{tikzpicture}
}}
\rightsquigarrow
q^{-ab},
\]
\[
\vcenter{\hbox{
\begin{tikzpicture}
\draw (0.7, 0) to[out=90, in=-90] (0, 2);
\draw[white, line width=5] (0, 0) to[out=90, in=-90] (0.7, 2);
\draw (0, 0) to[out=90, in=-90] (0.7, 2);
\draw[snake it] (0, 0) -- (0.7, 0);
\draw[snake it] (0, 2) -- (0.7, 2);
\draw[->, thick] (0.35, 0.2) -- (0.35, 0.5);
\draw[->, thick] (0.35, 1.5) -- (0.35, 1.8);

\node[above] at (0.35, 2){$a$};
\node[below] at (0.35, 0){$a$};
\end{tikzpicture}
}}
\rightsquigarrow (-1)^a q^{\frac{a^2}{2}}
,\qquad
\vcenter{\hbox{
\begin{tikzpicture}[xscale=-1]
\draw (0.7, 0) to[out=90, in=-90] (0, 2);
\draw[white, line width=5] (0, 0) to[out=90, in=-90] (0.7, 2);
\draw (0, 0) to[out=90, in=-90] (0.7, 2);
\draw[snake it] (0, 0) -- (0.7, 0);
\draw[snake it] (0, 2) -- (0.7, 2);
\draw[->, thick] (0.35, 0.2) -- (0.35, 0.5);
\draw[->, thick] (0.35, 1.5) -- (0.35, 1.8);

\node[above] at (0.35, 2){$a$};
\node[below] at (0.35, 0){$a$};
\end{tikzpicture}
}}
\rightsquigarrow (-1)^a q^{-\frac{a^2}{2}}.
\]
The powers of $q$ come from the self-linking numbers of $\gamma$, and the signs $(-1)^a$ in the half-twists come from the fact that negative hyperbolic flow loops are counted with signs (i.e., $(-1)^{\mathrm{nh}(\sigma)}$). 
\item Elliptic flow loops: Of course, the factors described above only account for the linking numbers among hyperbolic flow loops, so there is also a monomial factor coming from the linking numbers with the elliptic flow loops specified by $\sigma$, together with the sign $(-1)^{\mathrm{e}(\sigma)}$. 
\item Degree: The factor $x^{\deg \sigma}$ can be easily determined, as $\deg \sigma$ is simply the sum of the numbers on the strips corresponding to the arcs $\KH \cap F$, plus the sum of the degrees of the elliptic flow loops used in $\sigma$. 
\item Linking with the framing lines:
Now it remains to describe how to determine the framing factor $q^{-\mathrm{lk}(\sigma, \ell_{\mathrm{fr}})}$. 
Recall that $\ell_{\mathrm{fr}} = \gamma_e \cup \gamma_h \cup \ell_{\mathrm{pair}}$, where $\ell_{\mathrm{pair}}$ is the locus of pairs of saddles and local maxima that get created under splitting and annihilated under joining. 
Whenever such a pair is created, it is created near one of the inner boundaries of the splitting chart, so we use the convention to keep them near one of the two boundaries of the strips, though we allow them to slide from one boundary to the other occasionally. 
Whenever it slides, we pick up a linking factor, from the linking number with the elliptic component of the pair:
\[
\vcenter{\hbox{
\begin{tikzpicture}
\draw[snake it] (0, 0) -- (0.7, 0);
\draw[snake it] (0, 2) -- (0.7, 2);
\draw (0, 0) -- (0, 2);
\draw (0.7, 0) -- (0.7, 2);
\draw[->, thick] (0.35, 0.2) -- (0.35, 0.5);
\draw[->, thick] (0.35, 1.5) -- (0.35, 1.8);

\draw[green] (0.2, 0) to[out=90, in=-90] (0.5, 2);
\draw[red] (0.1, 0) to[out=90, in=-90] (0.6, 2);

\node[above] at (0.35, 2){$a$};
\node[below] at (0.35, 0){$a$};
\end{tikzpicture}
}}
\rightsquigarrow
q^{-\frac{a}{2}}
,\qquad
\vcenter{\hbox{
\begin{tikzpicture}
\draw[red] (0.1, 0) to[out=90, in=-90] (0.6, 2);

\filldraw[white, opacity=0.8] (0, 0) -- (0, 2) -- (0.7, 2) -- (0.7, 0) -- cycle;

\draw[snake it] (0, 0) -- (0.7, 0);
\draw[snake it] (0, 2) -- (0.7, 2);
\draw (0, 0) -- (0, 2);
\draw (0.7, 0) -- (0.7, 2);
\draw[->, thick] (0.35, 0.2) -- (0.35, 0.5);
\draw[->, thick] (0.35, 1.5) -- (0.35, 1.8);

\draw[green] (0.2, 0) to[out=90, in=-90] (0.5, 2);

\node[above] at (0.35, 2){$a$};
\node[below] at (0.35, 0){$a$};
\end{tikzpicture}
}}
\rightsquigarrow
q^{\frac{a}{2}},
\]
where on the left (resp., right) the elliptic component of the pair, shown in red, is above (resp., below) the strip. 
Whenever the pair gets annihilated at a joining, by sliding it to the other side if necessary, we make sure that the pair starts from the boundary of the strip that is closer to the other strip (in the planar diagram) that gets joined together. 
This is to ensure that there is no extra framing factor coming from such joining. 
For instance, 
\[
\vcenter{\hbox{
\begin{tikzpicture}[scale=0.8]
\begin{scope}[yscale=-1, xscale=-1]
\draw (0, 0) to[out=90, in=-90] (-0.7, 2);
\draw (1, 0) to[out=90, in=-90] (0.3, 2);
\draw[snake it] (-0.7, 2) -- (0.3, 2);

\draw[green] (0.5, 0) -- (0.5, -1.2);
\draw[green] (0.5, 0) to[out=90, in=-90] (-0.2, 2);

\draw[red] (0.5, 0) to[out=90, in=-90] (0.8, 2);

\filldraw[white, opacity=0.8] (0, 0) to[out=90, in=-90] (0.7, 2) -- (1.7, 2) to[out=-90, in=90] (1, 0) -- cycle;
\draw (0, 0) to[out=90, in=-90] (0.7, 2);
\draw (1, 0) to[out=90, in=-90] (1.7, 2);

\draw[green] (0.5, 0) to[out=90, in=-90] (0.9, 2);

\draw[snake it] (0.7, 2) -- (1.7, 2);

\draw[very thick] (0, 0) -- (1, 0);

\draw (0, 0) -- (0, -1.3);
\draw (1, 0) -- (1, -1.3);
\draw[snake it] (0, -1.3) -- (1, -1.3);

\draw[->, thick] (-0.2, 1.9) -- (-0.1, 1.5);
\draw[->, thick] (1.2, 1.9) -- (1.1, 1.5);
\draw[->, thick] (0.5, -0.7) -- (0.5, -1.1);

\node[above] at (0.5, -1.3){$a$};
\node[below] at (1.2, 2){$b$};
\node[below] at (-0.2, 2){$c$};
\end{scope}
\end{tikzpicture}
}}
\rightsquigarrow
\delta_{a,b+c} \qbin{b+c}{b}_q,
\]
since there is no extra framing factor in this picture. 
If the pair was starting from the other boundary of the strip labeled $b$, then there would be an extra factor of $q^{\frac{b}{2}}$ coming from sliding the pair from left to right. 
Finally, the linking numbers with $\gamma_e$ and $\gamma_h$ can be easily determined. 
\end{itemize}

Below, we illustrate our state sum model on knot holders through examples. 
\begin{eg}[State sum on trefoil knot holders]\label{eg:trefoil-state-sum}
\begin{figure}
\centering
\[
\vcenter{\hbox{
\begin{tikzpicture}[scale=1.2]
\draw[red] (-2.5, 1) circle (0.2);
\node[red, left] at (-2.7, 1){$\gamma_e$};

\draw[green] (0.25, 0) -- (0.25, 1) to[out=90, in=-90] (0.375, 2) to[out=90, in=0] (-0.625, 3.125) to[out=180, in=90] (-1.675, 2) to[out=-90, in=180] (-0.7, -0.75) to[out=0, in=-90] cycle;
\node[green, left] at (-1.8, 1.7){$\gamma_h$};

\draw[red] (0.25, 2) to[out=90, in=0] (-0.5, 2.7) to[out=180, in=90] (-1.2, 2) to[out=-90, in=180] (-0.5, 1.3) to[out=0, in=-90] (0.375, 2);

\draw (0.5, 0) to[out=90, in=-90] (0, 2);
\draw[white, line width=5] (0, 0) to[out=90, in=-90] (0.5, 2);
\draw (0, 0) to[out=90, in=-90] (0.5, 2);

\filldraw[white, opacity=0.8] (0.25, 2) to[out=90, in=0] (-0.5, 2.75) to[out=180, in=90] (-1.25, 2) to[out=-90, in=180] (-0.5, 1.25) to[out=0, in=-90] (0.5, 2) -- cycle;
\draw (0, 2) to[out=90, in=0] (-0.5, 2.5) to[out=180, in=90] (-1, 2) to[out=-90, in=180] (-0.5, 1.5) to[out=0, in=-90] (0, 2);
\draw (0.25, 2) to[out=90, in=0] (-0.5, 2.75) to[out=180, in=90] (-1.25, 2) to[out=-90, in=180] (-0.5, 1.25) to[out=0, in=-90] (0.5, 2);

\draw (0.25, 2) to[out=90, in=0] (-0.625, 3) to[out=180, in=90] (-1.5, 2) to[out=-90, in=180] (-0.7, -0.5) to[out=0, in=-90] (0, 0);
\draw (0.5, 2) to[out=90, in=0] (-0.625, 3.25) to[out=180, in=90] (-1.8, 2) to[out=-90, in=180] (-0.7, -1.0) to[out=0, in=-90] (0.5, 0);

\draw[green] (0.25, 2) to[out=90, in=0] (-0.5, 2.65) to[out=180, in=90] (-1.15, 2) to[out=-90, in=180] (-0.5, 1.35) to[out=0, in=-90] (0.375, 2);

\draw[darkgreen, ultra thick] (0, 0) -- (0.5, 0);
\draw[darkgreen, ultra thick] (0, 2) -- (0.5, 2);

\node at (0.25, 0.5){$a$};
\node at (-0.1, 1.5){$b$};
\node at (0.31, 2.4){$a$};
\node at (-0.01, 2.4){$b$};

\draw[->] (0.25, 0.1) -- (0.25, 0.3);
\draw[->] (0.11, 2.1) -- (0.07, 2.3);
\draw[->] (0.37, 2.1) -- (0.36, 2.3);
\end{tikzpicture}
}}
\qquad\qquad\qquad
\vcenter{\hbox{
\begin{tikzpicture}[scale=0.8, 
    mid/.style={
        postaction={
            decorate,
            decoration={
                markings,
                % Mark at position 0.5 (the middle) with an arrow tip
                mark=at position 0.5 with {\arrow{>}} 
            }
        }
    },
    midback/.style={
        postaction={
            decorate,
            decoration={
                markings,
                % Mark at position 0.5 (the middle) with an arrow tip
                mark=at position 0.5 with {\arrow{<}} 
            }
        }
    },
]

\draw[green] (0.25, 0) -- (0.25, 3) to[out=90, in=180] (1, 3.75) to[out=0, in=90] (1.75, 3) -- (1.75, 0) to[out=-90, in=0] (1, -0.75) to[out=180, in=-90] cycle;

\draw (0.5, 0) to[out=90, in=-90] (0, 1);
\draw[white, line width=4, opacity=0.9] (0, 0) to[out=90, in=-90] (0.5, 1);
\draw (0, 0) to[out=90, in=-90] (0.5, 1);

\begin{scope}[shift={(0, 1)}]
    \draw (0.5, 0) to[out=90, in=-90] (0, 1);
    \draw[white, line width=4, opacity=0.9] (0, 0) to[out=90, in=-90] (0.5, 1);
    \draw (0, 0) to[out=90, in=-90] (0.5, 1);
\end{scope}

\begin{scope}[shift={(0, 2)}]
    \draw (0.5, 0) to[out=90, in=-90] (0, 1);
    \draw[white, line width=4, opacity=0.9] (0, 0) to[out=90, in=-90] (0.5, 1);
    \draw (0, 0) to[out=90, in=-90] (0.5, 1);
\end{scope}
\draw[ultra thick, darkgreen] (0, 1) -- (0.5, 1);
\draw[->] (0.25, 1.1) -- (0.25, 1.3);
\draw[ultra thick, darkgreen] (0, 2) -- (0.5, 2);
\draw[->] (0.25, 2.1) -- (0.25, 2.3);
\draw[ultra thick, darkgreen] (0, 3) -- (0.5, 3);
\draw[->] (0.25, 3.1) -- (0.25, 3.3);

\draw (0.5, 3) to[out=90, in=180] (1, 3.5) to[out=0, in=90] (1.5, 3) -- (1.5, 0) to[out=-90, in=0] (1, -0.5) to[out=180, in=-90] (0.5, 0);
\draw (0, 3) to[out=90, in=180] (1, 4) to[out=0, in=90] (2, 3) -- (2, 0) to[out=-90, in=0] (1, -1) to[out=180, in=-90] (0, 0);

\draw[white, line width=4] (1.75, 1.5) [partial ellipse = 40 : -220 : 0.6 and 0.3];
\draw[red, mid] (1.75, 1.5) [partial ellipse = 50 : -230 : 0.6 and 0.3];
\node[red, right] at (2.35, 1.5){$\gamma_e$};
\node[green, right] at (2, 2.5){$\gamma_h$};

\node at (0.25, 0){$a$};

\end{tikzpicture}
}}
\]
\caption{State sums on two knot holders for the trefoil knot}
\label{fig:trefoil-state-sums}
\end{figure}
Let us consider state sums on two trefoil knot holders, one from Example \ref{eg:trefoil-direct-model}, and the other from Example \ref{eg:braid-homogeneous-link-knot-holder}, as shown in Figure \ref{fig:trefoil-state-sums}. 
The state sum on the knot holder on the left of Figure \ref{fig:trefoil-state-sums} results in the following formula for the flow loop count:
\begin{align*}
\Phi_{S^3 \setminus \mathbf{3}_1^r} (x,q)
&= 
(1-x) \sum_{a,b \geq 0} (-1)^{a} q^{\frac{a^2}{2}} \qbin{a+b}{a}_q x^{a + (a+b)} \cdot q^{\frac{a}{2}} \\
&= 1 - q x^2 - q^2 x^3 + q^5 x^5 + q^7 x^6 - q^{12} x^8 - q^{15} x^9 + O(x^{11}),
\end{align*}
where the overall factor of $(1-x)$ comes from the elliptic flow loop along the meridian, and the factor $q^{\frac{a}{2}}$ comes from the linking with the framing line $\gamma_h$. 

The state sum on the knot holder on the right of Figure \ref{fig:trefoil-state-sums} gives:
\begin{align*}
\Phi_{S^3 \setminus \mathbf{3}_1^r}(x,q) &= 
\sum_{\substack{a\geq 0 \\ 0\leq \epsilon \leq 1}}
(-1)^{a} q^{\frac{3 a^2}{2} + 2\epsilon a} x^{3a} (-x^2)^{\epsilon} \cdot q^{\frac{3a}{2} + \epsilon -a } \\
&= 1 - q x^2 - q^2 x^3 + q^5 x^5 + q^7 x^6 - q^{12} x^8 - q^{15} x^9 + O(x^{11}),
\end{align*}
where $\epsilon$ counts the elliptic flow loop $\gamma_e$, and the factor 
$q^{\frac{3a}{2} + \epsilon}$ (resp., $q^{-a}$) comes from the linking with the framing line $\gamma_h$ (resp., $\gamma_e$). 

Observe that they give the same count, which follows from Theorem \ref{mainthm:flow-loop-count}. 
Moreover, they agree with the BPS $q$-series for the trefoil knot complement \cite{GukovManolescu, Park_large-color}
\[
-q^{-1}x^{-\frac12} \widehat{Z}_{S^3 \setminus \mathbf{3}_1^r}(x,q) = 1 - q x^2 - q^2 x^3 + q^5 x^5 + q^7 x^6 - q^{12} x^8 - q^{15} x^9 + O(x^{11}),
\]
which is a special case of Theorem \ref{mainthm:flow-equals-quantum}. 
\end{eg}

\begin{eg}[State sum on figure-eight knot holders]\label{eg:figure-eight-state-sum}
\begin{figure}
\centering
\[
\vcenter{\hbox{
\begin{tikzpicture}[scale=0.8]
\begin{scope}[xscale=0.6]

\draw[green] (0.7, 2) to[out=-90, in=90] (0.5, 0);
\draw[red] (0.6, 2) to[out=-90, in=90] (0.5, 0);

\draw (0.5, 2) to[out=-90, in=90] (0, 0);
\draw (2.5, 2) to[out=-90, in=90] (3, 0);
\filldraw[white, opacity=0.9] (-1.5, 0) to[out=90, in=90] (3, 0) -- (0, 0) to[out=90, in=90] (-0.5, 0) -- cycle;
\draw (0, 0) to[out=-90, in=-90] (-0.5, 0) to[out=90, in=90] (0, 0);
\draw (1, 0) to[out=-90, in=-90] (-1.5, 0) to[out=90, in=90] (3, 0);
\draw (1, 0) to[out=-90, in=-90] (-2.5, 0.5) -- (-2.5, 1.5) to[out=90, in=90] (0.5, 2);
\draw (2, 0) to[out=-90, in=-90] (-3.5, 0.5) -- (-3.5, 1.5) to[out=90, in=90] (2.5, 2);

\draw[green] (1, 0) to[out=-92, in=-90] (-2.7, 0.5) -- (-2.7, 1.5) to[out=90, in=90] (0.7, 2);
\draw[red] (1, 0) to[out=-91, in=-90] (-2.6, 0.5) -- (-2.6, 1.5) to[out=90, in=90] (0.7, 2);

\filldraw[white, opacity=0.9] (1, 4) to[out=-90, in=90] (0.5, 2) -- (2.5, 2) to[out=90, in=-90] (2, 4) -- cycle;
\draw (1, 4) to[out=-90, in=90] (0.5, 2);
\draw (2, 4) to[out=-90, in=90] (2.5, 2);
\draw (1, 4) to[out=90, in=90] (6, 4) -- (6, 2) to[out=-90, in=-90] (7, 2);
\draw (3, 4) to[out=90, in=90] (5, 4) -- (5, 2) to[out=-90, in=-90] (8, 2);

\draw[green] (2, 4) to[out=-90, in=90] (0.7, 2);
\draw[red] (2, 4) to[out=-90, in=90] (0.6, 2);

\draw[green] (7.2, 2) to[out=-90, in=-90] (5.8, 2) -- (5.8, 3) to[out=90, in=-90] (5.2, 4) to[out=90, in=90] (2.5, 4);
\draw[red] (7.1, 2) to[out=-90, in=-90] (5.9, 2) -- (5.9, 3) to[out=90, in=-90] (5.1, 4) to[out=90, in=90] (2.5, 4);

\filldraw[white, opacity=0.9] (5, 0) -- (5, 2) to[out=90, in=90] (8, 2) -- (7, 2) to[out=90, in=90] (6, 2) -- (6, 0);
\draw (3, 0) to[out=-90, in=-90] (5, 0) -- (5, 2) to[out=90, in=90] (8, 2);
\draw (2, 0) to[out=-90, in=-90] (6, 0) -- (6, 2) to[out=90, in=90] (7, 2);
\filldraw[white, opacity=0.9] (1, 4) to[out=90, in=90] (4.5, 4) -- (3.5, 4) to[out=90, in=90] (3, 4) -- cycle;
\draw (3, 4) to[out=90, in=90] (3.5, 4) to[out=-90, in=-90] (3, 4);
\draw (1, 4) to[out=90, in=90] (4.5, 4) to[out=-90, in=-90] (2, 4);

\draw[green] (2, 0) to[out=-88, in=-90] (5.8, 0) -- (5.8, 2) to[out=90, in=90] (7.2, 2);
\draw[red] (2, 0) to[out=-89, in=-90] (5.9, 0) -- (5.9, 2) to[out=90, in=90] (7.1, 2);

\draw[green] (0.5, 0) to[out=90, in=90] (-1, 0) to[out=-90, in=-90] cycle;
\draw[green] (2.5, 4) to[out=90, in=90] (4, 4) to[out=-90, in=-90] cycle;
\node[green, below] at (3.25, 3.3){$\gamma_h$};

\draw[darkgreen, ultra thick] (0, 0) -- (1, 0);
\draw[darkgreen, ultra thick] (1, 0) -- (2, 0);
\draw[darkgreen, ultra thick] (2, 0) -- (3, 0);
\draw[very thick] (0.5, 2) -- (2.5, 2);
\draw[darkgreen, ultra thick] (1, 4) -- (3, 4);

\draw[->] (1.2, 0.6) -- (1.2, 0.3);
\draw[->] (1.5, 2.5) -- (1.5, 2.2);
\draw[->] (2, 4.5) -- (2, 4.2);

\node[below] at (0.43, 0){$a$};
\node[below] at (1.43, 0){$b$};
\node[below] at (2.5, 0){$c$};
\node at (-0.25, 0.6){$a$};
\node[below] at (1.5, 2){$b+c$};
\node at (0, 3){$b$};
\node[below] at (1.5, 4){$c$};
\node[below] at (2.65, 4){$d$};
\node at (4, 5){$c$};

\end{scope}
\draw[red] (-2.8, 4) circle (0.3);
\node[red, above] at (-2.8, 4.3){$\gamma_e$};
\end{tikzpicture}
}}
\qquad\qquad
\vcenter{\hbox{
\begin{tikzpicture}[
    mid/.style={
        postaction={
            decorate,
            decoration={
                markings,
                % Mark at position 0.5 (the middle) with an arrow tip
                mark=at position 0.5 with {\arrow{>}} 
            }
        }
    },
    midback/.style={
        postaction={
            decorate,
            decoration={
                markings,
                % Mark at position 0.5 (the middle) with an arrow tip
                mark=at position 0.5 with {\arrow{<}} 
            }
        }
    },
]
\draw (1.3, 0) -- (1.3, 0.3) to[out=-150, in=-90] (0.3, 1);
% \filldraw[white, opacity=0.8] (0.3, 0) to[out=90, in=-90] (0.7, 1) -- (0.3, 1) -- cycle;

\draw[green] (1.3, 0.3) to[out=-170, in=-90] (0.5, 1);
\draw[red] (1.3, 0.3) to[out=-160, in=-90] (0.5, 0.8) -- (0.5, 1);
\filldraw[white, opacity=0.8] (0.3, 0) to[out=90, in=-90] (0.7, 1) -- (0.3, 1) to[out=-90, in=90] (0.7, 0) -- cycle;

\draw[green, ->] (0.5, 0) -- (0.5, 4) to[out=90, in=180] (2.2, 5.7) to[out=0, in=90] (3.9, 4) -- (3.9, 0) to[out=-90, in=0] (2.2, -1.7) to[out=180, in=-90] (0.5, 0);
\draw[green, <-] (1.5, 0) -- (1.5, 4) to[out=90, in=180] (2.2, 4.7) to[out=0, in=90] (2.9, 4) -- (2.9, 0) to[out=-90, in=0] (2.2, -0.7) to[out=180, in=-90] (1.5, 0);

\draw (1.7, 0) -- (1.7, 1);
\draw (0.7, 0) to[out=90, in=-90] (0.3, 1);
\draw[white, line width=4, opacity=0.9] (0.3, 0) to[out=90, in=-90] (0.7, 1);
\draw (0.3, 0) to[out=90, in=-90] (0.7, 1);
\draw (1.3, 1) -- (1.3, 0.7) to[out=-120, in=-90] (0.7, 1);

\draw (1.3, 1) to[out=90, in=-90] (1.7, 2);
\draw[white, line width=4, opacity=0.9] (1.7, 1) to[out=90, in=-90] (1.3, 2);
\draw (1.7, 1) to[out=90, in=-90] (1.3, 2);
\draw (0.3, 1) -- (0.3, 2);

\draw[red] (0.7, 1.7) to[out=20, in=90] (1.5, 1.2) -- (1.5, 1);

\filldraw[white, opacity=0.8] (0.7, 1.7) to[out=30, in=90] (1.7, 1) -- (1.3, 1) to[out=90, in=60] (0.7, 1.3) -- cycle;
\draw (0.7, 1) -- (0.7, 1.3) to[out=60, in=90] (1.3, 1);
\draw (0.7, 2) -- (0.7, 1.7) to[out=30, in=90] (1.7, 1);

\draw[green] (0.7, 1.7) to[out=10, in=90] (1.5, 1);

\begin{scope}[shift={(0, 2)}]
    \draw (1.3, 0) -- (1.3, 0.3) to[out=-150, in=-90] (0.3, 1);
    % \filldraw[white, opacity=0.8] (0.3, 0) to[out=90, in=-90] (0.7, 1) -- (0.3, 1) -- cycle;

    \draw[green] (1.3, 0.3) to[out=-170, in=-90] (0.5, 1);
    \draw[red] (1.3, 0.3) to[out=-160, in=-90] (0.5, 0.8) -- (0.5, 1);
    \filldraw[white, opacity=0.8] (0.3, 0) to[out=90, in=-90] (0.7, 1) -- (0.3, 1) to[out=-90, in=90] (0.7, 0) -- cycle;
    \draw[green] (0.5, 0) -- (0.5, 1);
    
    \draw (1.7, 0) -- (1.7, 1);
    \draw (0.7, 0) to[out=90, in=-90] (0.3, 1);
    \draw[white, line width=4, opacity=0.9] (0.3, 0) to[out=90, in=-90] (0.7, 1);
    \draw (0.3, 0) to[out=90, in=-90] (0.7, 1);
    \draw (1.3, 1) -- (1.3, 0.7) to[out=-120, in=-90] (0.7, 1);
    
    \draw (1.3, 1) to[out=90, in=-90] (1.7, 2);
    \draw[white, line width=4, opacity=0.9] (1.7, 1) to[out=90, in=-90] (1.3, 2);
    \draw (1.7, 1) to[out=90, in=-90] (1.3, 2);
    \draw (0.3, 1) -- (0.3, 2);

    \draw[red] (0.7, 1.7) to[out=20, in=90] (1.5, 1.2) -- (1.5, 1);
    
    \filldraw[white, opacity=0.8] (0.7, 1.7) to[out=30, in=90] (1.7, 1) -- (1.3, 1) to[out=90, in=60] (0.7, 1.3) -- cycle;
    \draw (0.7, 1) -- (0.7, 1.3) to[out=60, in=90] (1.3, 1);
    \draw (0.7, 2) -- (0.7, 1.7) to[out=30, in=90] (1.7, 1);

    \draw[green] (0.7, 1.7) to[out=10, in=90] (1.5, 1);
\end{scope}

\draw (1.7, 4) to[out=90, in=180] (2.2, 4.5) to[out=0, in=90] (2.7, 4) -- (2.7, 0) to[out=-90, in=0] (2.2, -0.5) to[out=180, in=-90] (1.7, 0);
\draw (1.3, 4) to[out=90, in=180] (2.2, 4.9) to[out=0, in=90] (3.1, 4) -- (3.1, 0) to[out=-90, in=0] (2.2, -0.9) to[out=180, in=-90] (1.3, 0);
\draw (0.7, 4) to[out=90, in=180] (2.2, 5.5) to[out=0, in=90] (3.7, 4) -- (3.7, 0) to[out=-90, in=0] (2.2, -1.5) to[out=180, in=-90] (0.7, 0);
\draw (0.3, 4) to[out=90, in=180] (2.2, 5.9) to[out=0, in=90] (4.1, 4) -- (4.1, 0) to[out=-90, in=0] (2.2, -1.9) to[out=180, in=-90] (0.3, 0);

\draw[ultra thick, darkgreen] (0.3, 1) -- (0.7, 1);
\draw[ultra thick, darkgreen] (0.3, 3) -- (0.7, 3);
\draw[very thick] (1.3, 1) -- (1.7, 1);
\draw[ultra thick, darkgreen] (1.3, 2) -- (1.7, 2);
\draw[very thick] (1.3, 3) -- (1.7, 3);
\draw[ultra thick, darkgreen] (1.3, 4) -- (1.7, 4);

\draw[->] (0.5, 1.1) -- (0.5, 1.3);
\draw[->] (0.5, 3.1) -- (0.5, 3.3);
\draw[->] (1.5, 1.9) -- (1.5, 1.7);
\draw[->] (1.5, 3.9) -- (1.5, 3.7);

\draw[white, line width=4] (3.4, 2) [partial ellipse = 40 : -220 : 1.0 and 0.3];
\draw[red, mid] (3.4, 2) [partial ellipse = 40 : -220 : 1.0 and 0.3];
\draw[red] (3.4, 2) [partial ellipse = 77 : 103 : 1.0 and 0.3];
\node[red, right] at (4.4, 2){$\gamma_e$};
\node[green, right] at (4.1, 4){$\gamma_h$};

\node at (0.5, 0){$a$};
\node at (0.5, 4){$a$};
\node at (1.5, 0){$b$};
\node at (1.5, 4){$b$};
\node at (1.0, 0.45){$c$};
\node at (0.5, 2){$d$};
\node at (1.0, 3.55){$e$};
\end{tikzpicture}
}}
\]
\caption{State sums on two knot holders for the figure-eight knot}
\label{fig:figure-eight-state-sums}
\end{figure}

Let us consider state sums on two figure-eight knot holders, one from the ``direct model'' of \cite{BirmanWilliams} (which can be derived in the same manner as in Example \ref{eg:trefoil-direct-model}), and the other\footnote{This figure-eight knot holder happens to agree with the ``branched covering model'' from \cite{BirmanWilliams}.} from Example \ref{eg:braid-homogeneous-link-knot-holder}, as shown in Figure \ref{fig:figure-eight-state-sums}. 
From the first knot holder on the left of Figure \ref{fig:figure-eight-state-sums}, we have:
\begin{align*}
\Phi_{S^3 \setminus \mathbf{4}_1}(x,q)
&= (1-x)\sum_{a,b,c,d \geq 0}
q^{c^2}
\qbin{a+b+c}{a}_{q^{-1}}
\qbin{b+c}{b}_{q}
\qbin{c+d}{c}_{q}
x^{(a+b+c)+(c+d)} \\ 
&= 1 + 2x + (q^{-1}+3+q)x^2 + (2q^{-2}+2q^{-1}+5+2q+2q^2)x^3 +O(x^4),
\end{align*}
where the overall factor of $(1-x)$ comes from the elliptic flow loop along the meridian. 
No framing factors appear, since $\gamma_e$ and $\gamma_h$ do not link the knot holder at all, and the sliding factors from $\ell_{\mathrm{pair}}$ cancel: $q^{\frac{c}{2}}\cdot q^{-\frac{c}{2}} = 1$. 

From the second knot holder on the right of Figure \ref{fig:figure-eight-state-sums}, we have:
\begin{align*}
\Phi_{S^3 \setminus \mathbf{4}_1}(x,q)
&= \sum_{\substack{a,b,c,d,e \geq 0 \\ 0\leq \epsilon \leq 1}} (-1)^{a+d+b+(b-a+d)} q^{\frac{a^2 + d^2 - b^2 - (b-a+d)^2}{2} + 2\epsilon a - 2\epsilon b} \qbin{a+c}{a}_q \qbin{a+e}{d}_q\\
&\quad \times  \qbin{b+e}{b}_{q^{-1}} \qbin{b+c}{a+c-d}_{q^{-1}} 
x^{(a+c)+(a+e)+b+(b-a+d)} 
(-x^3)^{\epsilon}
\cdot q^{\frac{a+d-b-(b-a+d)}{2} - (a-b)} \\
&= 1 + 2x + (q^{-1}+3+q)x^2 + (2q^{-2}+2q^{-1}+5+2q+2q^2)x^3 +O(x^4),
\end{align*}
where $\epsilon$ counts the elliptic flow loop $\gamma_e$, and the factor 
$q^{\frac{a+d-b-(b-a+d)}{2}}$ (resp., $q^{- (a-b)}$) comes from the linking with the framing line $\gamma_h$ (resp., $\gamma_e$). 

Observe that they give the same count, which follows from Theorem \ref{mainthm:flow-loop-count}. 
Moreover, they agree with the BPS $q$-series for the figure-eight knot complement \cite{GukovManolescu, Park_inverted}
\[
x^{-\frac12} \widehat{Z}_{S^3 \setminus \mathbf{4}_1}(x,q) = 1 + 2x + (q^{-1}+3+q)x^2 + (2q^{-2}+2q^{-1}+5+2q+2q^2)x^3 +O(x^4),
\]
which is a special case of Theorem \ref{mainthm:flow-equals-quantum}. 
\end{eg}

\begin{rmk}
The knot holders encode all the information to determine the full HOMFLYPT-skein-valued flow loop count (i.e., not just its specialization to $\mathrm{GL}_1$-skeins), where the multiple covers of a primitive flow loop $\gamma$ are counted by the skein-valued basic annulus partition function \cite[Sec. 2.2]{ChauhanEkholmLonghi}
\[
w_\gamma := 
\begin{cases}
\sum_{\lambda} (-1)^{|\lambda|} W_{\lambda,\emptyset} \otimes W_{\emptyset, \lambda^t} &\text{if } \gamma \text{ is elliptic}, \\
\sum_{\lambda} W_{\lambda, \emptyset} \otimes W_{\emptyset, \lambda} &\text{if } \gamma \text{ is positive hyperbolic}, \\
\sum_{\lambda} (-1)^{|\lambda|} W_{\lambda, \emptyset} \otimes W_{\emptyset, \lambda} &\text{if } \gamma \text{ is negative hyperbolic}.
\end{cases}
\]
However, computing such a partition function in practice is unwieldy, as it often requires prior knowledge of all colored HOMFLYPT polynomials of all link types. 
For instance, even for the figure-eight knot, it is known that all link types appear as periodic orbits \cite{Ghrist}. 
Hence, the reduction to $\mathrm{GL}_1$-skeins was crucial for our purposes to have a computable result as in Example \ref{eg:figure-eight-state-sum}. 
\end{rmk}

\subsection{Skein-categorical description}\label{subsec:skein-category}
Here we sketch how to extend down (in the sense of TQFT) to surfaces, by rephrasing the flow loop count in terms of skein categories. 
It will not be used in the rest of this paper, so uninterested readers may safely skip this subsection. 
\begin{defn}
Given a surface $F$ equipped with a line field $\xi$ with transverse zeros, its $\mathrm{GL}_1$-\emph{skein category} $\SkCat^{\mathrm{GL}_1}(F, \xi)$ is the $\mathbb{Z}[q^{\pm \frac12}]$-linear category where 
\begin{itemize}
\item Objects are finite set of points on $F$, away from the zeros of $\xi$, and 
\item For each pair of objects $X = \{x_1, \cdots, x_k\}$ and $Y = \{y_1, \cdots, y_l\}$, the $\mathbb{Z}[q^{\pm \frac12}]$-module of morphisms $\mathrm{Hom}(X,Y)$ is given by the $\mathrm{GL}_1$-skein module of $F \times [0, 1]$ with framing lines along the zeros of $\xi$, with boundary conditions given by $X \times \{0\}$ and $Y \times \{1\}$. 
That is, it is generated by framed\footnote{Allowing half-twists as before.}, oriented tangles in $F\times [0, 1]$ with incoming (resp., outgoing) boundary points at $X \times \{0\}$ (resp., $Y \times \{1\}$) where the framing agrees with $\xi$, modulo the usual $\mathrm{GL}_1$-skein relations and the framing line relation. 
\end{itemize}
\end{defn}
In the following, we implicitly pass to an appropriate grading completion of the additive closure of the skein category to allow (infinite) direct sums of objects and matrices of morphisms. 

Let $F$ be a surface equipped with a marking, as well as a perturbation vector field $\xi = \xi_{\mathrm{pert}}$ adapted to the marking (as in Section \ref{subsec:adapted-framing}). 
By a \emph{state} $\sigma$ on the marked surface $F$, we mean an assignment of $\sigma(p_i) \in \{0, 1\}$ to each base point $p_i$ and an assignment of $\sigma(\alpha_j) \in \mathbb{Z}_{\geq 0}$ to each arc $\alpha_j$. 
Let $\mathcal{S}(F)$ denote the set of all such states. 
For each state $\sigma \in \mathcal{S}(F)$ on the marked surface $F$, let $X_\sigma$ be the object of the skein category $\SkCat^{\mathrm{GL}_1}(F, \xi)$, consisting of $\sigma(\alpha_j)$ distinct interior points of $\alpha_j$, for each $j$, together with all the base points $p_i$ for which $\sigma(p_i) = 1$.\footnote{The base points $p_i$ are zeros of $\xi$, so technically, we need to slightly shift these points off the zeros of $\xi$, but it doesn't matter how we are shifting them off, as the resulting objects are all naturally isomorphic, with the isomorphism given by the tangle consisting of (almost) vertical strands, framed by $\xi$. For the same reason, it doesn't matter how we choose the $\sigma(\alpha_j)$ interior points of $\alpha_j$ relative to the zero of $\xi$ on $\alpha_j$.}
By taking the formal direct sum over all states, we obtain an object
\[
Z(F) := \bigoplus_{\sigma \in \mathcal{S}(F)} X_{\sigma} \in \SkCat^{\mathrm{GL}_1}(F, \xi). 
\]

For any self-homeomorphism $\varphi : F\rightarrow F$ preserving the set of base points, as described in Section \ref{subsec:knot-holder-construction}, there is a knot holder $S$ in $F \times [0, 1]$ given by the locus of the arcs in the mapping cylinder of $\varphi$, composed with the movie of splitting and joining of the arcs that turns the arcs of $\varphi(F)$ to those of $F$. 
Moreover, as described in Section \ref{subsec:adapted-framing}, there is a perturbation vector field $\xi = \xi_{\mathrm{pert}}$ on $F\times [0, 1]$ adapted to $S$. 
Hence, we can perform a state sum on $S$ to obtain a morphism
\[
Z(\varphi) : Z(F) \rightarrow Z(F). 
\]
More precisely, $Z(\varphi)$ is given by an $\mathcal{S}(F) \times \mathcal{S}(F)$ matrix, whose $(\sigma', \sigma)$-entry is the sum of all possible ways to connect the points $\varphi(X_\sigma)$ to the points $X_{\sigma'}$---where the base points should stay constant, and the points on arcs can follow along the knot holder $S$---times a sign given by $(-1)^{\mathrm{nh} + \mathrm{e}}$, where $\mathrm{nh}$ counts the number of negative hyperbolic flow lines\footnote{Here we are choosing an orientation of the markings, in order to tell which strips of $S$ are half-twisted. The trace of $Z(\varphi)$, however, would be independent of such an orientation choice.}, and $\mathrm{e}$ counts the inversion number of the partial permutation of the base points used in the states $\sigma$ and $\sigma'$.\footnote{Here we are using an ordering of the base points. The trace of $Z(\varphi)$, however, would be independent of such an ordering choice.}

By construction, the assignment $\varphi \mapsto Z(\varphi)$ is functorial, i.e., $Z(\varphi' \circ \varphi) = Z(\varphi') \circ Z(\varphi)$, and we obtain a representation of the mapping class group of $F$ that fixes the set of base points:
\[
\mathrm{MCG}(F) \rightarrow \mathrm{Aut}(Z(F)).
\]
Moreover, the supertrace\footnote{The $\mathbb{Z}/2$-grading is given by the parity of the number of base points used in each state. This is to get the appropriate sign for the elliptic flow loops in the mapping torus.} of $Z(\varphi)$, 
\[
\Tr Z(\varphi) \in \Sk_{q}^{\mathrm{GL}_1} (M_\varphi, \ell_{\mathrm{fr}}),
\]
which is an element of (the action completion of) the $\mathrm{GL}_1$-skein module of the mapping torus $M_\varphi$ of $\varphi$ twisted by framing line $\ell_{\mathrm{fr}}$ along the zeros of $\xi$, recovers the state sum model on the knot holder discussed in Section \ref{subsec:knot-holder-state-sum}. 
That is, this is nothing but the $\mathrm{GL}_1$-skein-valued flow loop count $\Phi_{M_\varphi}$ in $M_\varphi$.

\section{Braid group representation on Verma modules from flow loops}\label{sec:braid-group-repn}
It turns out that, in the special case when $F = D^2_n$ is a disk with $n$ punctures, we can recover the Lawrence representation \cite{Lawrence} $L_{n,m}$ of the braid group $B_n$ by counting flow lines on the knot holders associated to braids $\beta \in B_n$.\footnote{This is a special case of the mapping class group representations constructed in Section \ref{subsec:skein-category}.} 
Since the Lawrence representation is known to be equivalent to the braid group representation on an appropriate subspace of the tensor products of $n$ copies of Verma modules of the quantum group $U_q(\mathrm{sl}_2)$, this provides a key connection between the flow loop counts and quantum invariants.

\subsection{Braid group representation from knot holders}\label{subsec:braid-knot-holder}
Let $D^2_n$ be a disk with $n$ punctures $p_1, \cdots, p_n$ which are ordered. 
Treating the boundary of the disk as a source and the punctures as sinks, we draw a marking on $D^2_n$ as in Figure \ref{fig:punctured-disk-marking}, by drawing an arc $\alpha_i$ between each pair of neighboring punctures $p_i$ and $p_{i+1}$. 
\begin{figure}
\centering
\[
\vcenter{\hbox{
\begin{tikzpicture}
\draw (0, 0) [partial ellipse = 0 : 360 : 2.0 and 1.0];
\draw (-1.5, 0) circle (0.1);
\draw (-0.75, 0) circle (0.1);
\draw (0, 0) circle (0.1);
\node at (0.75, 0){$\cdots$};
\draw (1.5, 0) circle (0.1);
\draw[darkgreen, thick] (-1.4, 0) -- (-0.85, 0);
\draw[darkgreen, thick] (-0.65, 0) -- (-0.1, 0);
\draw[darkgreen, thick] (0.1, 0) -- (0.4, 0);
\draw[darkgreen, thick] (1.1, 0) -- (1.4, 0);
\node[below] at (-1.5, -0.1){$p_1$};
\node[below] at (-0.75, -0.1){$p_2$};
\node[below] at (0, -0.1){$p_3$};
\node[below] at (1.5, -0.1){$p_n$};
\node[above, darkgreen] at (-1.125, 0){$\alpha_1$};
\node[above, darkgreen] at (-0.375, 0){$\alpha_2$};
\node[above, darkgreen] at (1.125, 0){$\alpha_{n-1}$};
\end{tikzpicture}
}}
\]
\caption{A disk with $n$ punctures and markings}
\label{fig:punctured-disk-marking}
\end{figure}
The braid group $B_n$ acts on $D^2_n$, and the movie of splitting and joining of the arcs, together with the framing lines, is shown in Figure \ref{fig:braiding-movie}. 
\begin{figure}
\centering
\[
\vcenter{\hbox{
\begin{tikzpicture}
\draw (-1.2, 0) circle (0.1);
\draw (-0.4, 0) circle (0.1);
\draw (0.4, 0) circle (0.1);
\draw (1.2, 0) circle (0.1);
\draw[darkgreen, thick] (-0.3, 0) -- (0.3, 0);
\draw[darkgreen, thick] (-1.1, 0) -- (-0.5, 0);
\draw[darkgreen, thick] (0.5, 0) -- (1.1, 0);

\filldraw[green] (-0.8, 0) circle (0.05);
\filldraw[green] (0.0, 0) circle (0.05);
\filldraw[green] (0.8, 0) circle (0.05);

\node[below] at (-0.4, -0.1){$p_i$};
\node[below] at (0.4, -0.1){$p_{i+1}$};
\end{tikzpicture}
}}
\;\;
\overset{\sigma_i}{\rightarrow}
\;\;
\vcenter{\hbox{
\begin{tikzpicture}
\draw (-1.2, 0) circle (0.1);
\draw (-0.4, 0) circle (0.1);
\draw (0.4, 0) circle (0.1);
\draw (1.2, 0) circle (0.1);
\draw[darkgreen, thick] (-0.3, 0) -- (0.3, 0);
\draw[darkgreen, thick] (-1.2, -0.1) to[out=-90, in=-90] (0.4, -0.1);
\draw[darkgreen, thick] (-0.4, 0.1) to[out=90, in=90] (1.2, 0.1);

\filldraw[green] (0.0, 0) circle (0.05);
\filldraw[green] (-0.8, -0.51) circle (0.05);
\filldraw[green] (0.8, 0.51) circle (0.05);
\end{tikzpicture}
}}
\;\;
\overset{\text{split}}{\rightsquigarrow}
\;\;
\vcenter{\hbox{
\begin{tikzpicture}
\draw (-1.2, 0) circle (0.1);
\draw (-0.4, 0) circle (0.1);
\draw (0.4, 0) circle (0.1);
\draw (1.2, 0) circle (0.1);
\draw[darkgreen, thick] (-0.3, 0) -- (0.3, 0);
\draw[darkgreen, thick] (-1.1, 0) -- (-0.5, 0);
\draw[darkgreen, thick] (0.5, 0) -- (1.1, 0);
\draw[darkgreen, thick] (-0.4, 0.1) to[out=90, in=90] (0.4, 0.1);
\draw[darkgreen, thick] (-0.4, -0.1) to[out=-90, in=-90] (0.4, -0.1);

\filldraw[green] (-0.8, 0) circle (0.05);
\filldraw[green] (0.0, 0) circle (0.05);
\filldraw[green] (0.8, 0) circle (0.05);

\filldraw[green] (-0.365, -0.2) circle (0.05);
\filldraw[red] (-0.27, -0.12) circle (0.05);
\filldraw[green] (0.365, 0.2) circle (0.05);
\filldraw[red] (0.27, 0.12) circle (0.05);
\end{tikzpicture}
}}
\;\;
\overset{\text{join}}{\rightsquigarrow}
\;\;
\vcenter{\hbox{
\begin{tikzpicture}
\draw (-1.2, 0) circle (0.1);
\draw (-0.4, 0) circle (0.1);
\draw (0.4, 0) circle (0.1);
\draw (1.2, 0) circle (0.1);
\draw[darkgreen, thick] (-0.3, 0) -- (0.3, 0);
\draw[darkgreen, thick] (-1.1, 0) -- (-0.5, 0);
\draw[darkgreen, thick] (0.5, 0) -- (1.1, 0);

\filldraw[green] (-0.8, 0) circle (0.05);
\filldraw[green] (0.0, 0) circle (0.05);
\filldraw[green] (0.8, 0) circle (0.05);
\end{tikzpicture}
}}
\]
\caption{Action of $\sigma_{i} \in B_n$ on the markings, together with the framing lines}
\label{fig:braiding-movie}
\end{figure}
\begin{figure}
\centering
\[
\sigma_i
=
\vcenter{\hbox{
\begin{tikzpicture}
\draw[->] (-1, 0) -- (-1, 2);
\draw[->] (1, 0) to[out=90, in=-90] (0, 2);
\draw[white, line width=5] (0, 0) to[out=90, in=-90] (1, 2);
\draw[->] (0, 0) to[out=90, in=-90] (1, 2);
\draw[->] (2, 0) -- (2, 2);

\node at (-1.5, 1){$\cdots$};
\node at (2.5, 1){$\cdots$};

\node[below] at (0, 0){$p_i$};
\node[below] at (1, 0){$p_{i+1}$};
\end{tikzpicture}
}}
\quad
\rightsquigarrow
\quad
\vcenter{\hbox{
\begin{tikzpicture}

\draw (1.75, 1) to[out=112.5, in=-90] (1.5, 2);
\draw (-0.75, 1) to[out=77.5, in=-90] (-0.5, 2);
\draw (1.5, 0) to[out=90, in=-90] (0, 2);
\filldraw[white, opacity=0.7] (1, 0) to[out=90, in=-90] (0, 2) -- (1, 2) to[out=-90, in=90] (0, 0) -- cycle;
\draw (1, 0) to[out=90, in=-90] (0, 2);

\draw[green, ->] (-1, 0) -- (-1, 2);
\draw[green, ->] (0.5, 0) -- (0.5, 2);
\draw[green, ->] (2, 0) -- (2, 2);

\draw[green, ->] (1.75, 1) to[out=150, in=-90] (0.5, 2);
\draw[red, ->] (1.75, 1) to[out=120, in=-90] (0.5, 1.8) -- (0.5, 2);

\draw[red, ->] (-0.75, 1) to[out=60, in=-90] (0.5, 1.8) -- (0.5, 2);

\draw[white, line width=4] (0, 0) to[out=90, in=-90] (1, 2);
\draw[white, line width=4] (-0.5, 0) to[out=90, in=-90] (1, 2);
\filldraw[white, opacity=0.7] (-0.5, 0) to[out=90, in=-90] (1, 2) -- (0, 2) to[out=-90, in=77.5] (-0.75, 1) -- cycle;

\draw[green, ->] (-0.75, 1) to[out=30, in=-90] (0.5, 2);

\draw (-1.5, 0) -- (-0.5, 0);
\draw (-1.5, 2) -- (-0.5, 2);
\draw (-1.5, 0) -- (-1.5, 2);
\draw (0, 0) -- (1, 0);
\draw (0, 2) -- (1, 2);
\draw (1.5, 0) -- (2.5, 0);
\draw (1.5, 2) -- (2.5, 2);
\draw (2.5, 0) -- (2.5, 2);
\draw (0, 0) to[out=90, in=-90] (1, 2);
\draw (1.75, 1) to[out=112.5, in=-90] (1, 2);
\draw (-0.75, 1) to[out=77.5, in=-90] (0, 2);
\draw (-0.5, 0) to[out=90, in=-90] (1, 2);

\node at (-2.0, 1){$\cdots$};
\node at (3.0, 1){$\cdots$};

\node[below] at (-1, 0){$\alpha_{i-1}$};
\node[below] at (0.5, 0){$\alpha_i$};
\node[below] at (2, 0){$\alpha_{i+1}$};
\end{tikzpicture}
}}
\]
\caption{Knot holder for the Artin generators $\sigma_i \in B_n$. 
When $i=1$ (resp., $i=n-1$), there are no strands to the left (resp., right) of the crossing, so the strips starting from the arc $\alpha_{i-1}$ (resp., $\alpha_{i+1}$) are not present.}
\label{fig:braiding-knot-holder}
\end{figure}
The resulting knot holder is shown in Figure \ref{fig:braiding-knot-holder}. 

For any nonnegative integer $m$, let $\mathcal{S}_{n,m}$ denote the set of all ordered $(n-1)$-tuples of nonnegative integers, $(a_1, \cdots, a_{n-1})$, such that $\sum_{1\leq i\leq n-1} a_i = m$. 
Thinking of $a_i$ as the number of points on the arc $\alpha_i$, this is just the set of states on $D^2_n$ consisting of a total of $m$ points. 
Counting the flow lines on the knot holders above, we obtain a representation of $B_n$ on a vector space with basis labeled by $\mathcal{S}_{n,m}$, which we summarize as a proposition: 
\begin{prop}\label{prop:braid-group-rep}
Let $V_{n,m}$ be the $\binom{m+n-2}{m}$-dimensional vector space with basis $\{v_{\vec{a}}\}_{\vec{a} \in \mathcal{S}_{n,m}}$ labeled by $\mathcal{S}_{n,m}$. 
Then, the following assignment defines a representation of the braid group $B_n$ on $V_{n,m}$:
\[
\sigma_i \cdot v_{(a_1, \cdots, a_{n-1})} = 
\begin{cases}
\sum_{\substack{0\leq b\leq a_{i-1} \\ 0\leq c\leq a_{i+1}}}
(-1)^{a_i}
q^{\frac{{a_i}^2}{2} + \frac{a_i + b + c}{2}}
\qbin{a_i + b+c}{a_i, b, c}_q
x^{\frac{(a_i + b) + (a_i +c)}{2}} & \\
\qquad\qquad \times v_{(a_1, \cdots, a_{i-1} - b, a_{i} + b + c, a_{i+1} - c, \cdots, a_{n-1})} &\text{if }2\leq i\leq n-2, \\
\sum_{\substack{0\leq c\leq a_{2}}} (-1)^{a_i}
q^{\frac{{a_i}^2}{2} + \frac{a_i + c}{2}}
\qbin{a_1 + c}{c}_q
x^{\frac{a_i + (a_i +c)}{2}} & \\
\qquad\qquad \times
v_{(a_{1} + c, a_{2} - c, \cdots, a_{n-1})} &\text{if }i = 1, \\
\sum_{\substack{0\leq b\leq a_{n-2}}} (-1)^{a_i}
q^{\frac{{a_i}^2}{2} + \frac{a_i + b}{2}}
\qbin{a_{n-1} + b}{b}_q
x^{\frac{(a_i + b) + a_i}{2}} & \\
\qquad\qquad \times v_{(a_1, \cdots, a_{n-2} - b, a_{n-1} + b)} &\text{if }i = n-1,
\end{cases}
\]
where the $q$-trinomial coefficients are defined by
\[
\qbin{a+b+c}{a,b,c}_q := \frac{[a+b+c]_q!}{[a]_q! [b]_q! [c]_q!} = \qbin{a+b}{b}_q \qbin{a+b+c}{c}_q.
\]
\end{prop}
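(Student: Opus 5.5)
The plan is to prove two things: that the matrix entries are exactly the local state-sum weights of the knot holder in Figure \ref{fig:braiding-knot-holder}, and that the assignment respects the braid relations. For the first, I would specialize the skein-categorical construction of Section \ref{subsec:skein-category} to $F = D^2_n$ with the marking of Figure \ref{fig:punctured-disk-marking}. There are no base points here, since the punctures are sinks and not elliptic framing lines inside the surface, so states are just the tuples $\vec a\in\mathcal{S}_{n,m}$. Because the flow preserves the total number of strands on the arcs, the matrix $Z(\beta)$ is block diagonal, with one block for each $m$.

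To compute $\sigma_i\cdot v_{\vec a}$, I would trace the movie in Figure \ref{fig:braiding-movie}. The $a_i$ strands on $\alpha_i$ go through a half-twisted strip, which contributes $(-1)^{a_i}q^{a_i^2/2}$ by the half-twist rule. The arc $\varphi(\alpha_{i-1})$ splits into a piece parallel to $\alpha_{i-1}$ and a piece parallel to $\alpha_i$, carrying $a_{i-1}-b$ and $b$ strands. Splitting costs nothing beyond admissibility. The same happens for $\alpha_{i+1}$, with $a_{i+1}-c$ and $c$ strands. The three groups of strands $a_i$, $b$, $c$ then join into $\alpha_i$. Two successive joins give $\qbin{a_i+b}{b}_q\qbin{a_i+b+c}{c}_q$, which is the $q$-trinomial coefficient. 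I would read off from the planar diagram that both joins are of the positive type. The framing factor $q^{(a_i+b+c)/2}$ should come from the strands crossing the green and red pair lines $\ell_{\mathrm{pair}}$ created at the two splittings, using the sliding rule, together with the convention that annihilation at a join costs nothing. Finally, the half-integer $x$-exponent records the half-intersections with the Seifert arcs. I would treat it as a bookkeeping grading, for instance by declaring $x^{1/2}$ per strand through each crossing strip so that the closure recovers $\deg\sigma$. I would not try to derive it from first principles. The cases $i=1$ and $i=n-1$ follow by deleting the absent strips.

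For the representation property, I would rely on functoriality of $\varphi\mapsto Z(\varphi)$. The braid relations $\sigma_i\sigma_{i+1}\sigma_i=\sigma_{i+1}\sigma_i\sigma_{i+1}$ and the commutation of distant generators hold in the mapping class group of $D^2_n$. Composing the knot holders for two isotopic words then gives two knot holders in $D^2_n\times[0,1]$ related by an isotopy of the monodromy. The skein-valued count of flow lines between fixed boundary states is invariant under such an isotopy. This is the relative version of the bifurcation argument in the proof of Theorem~\ref{mainthm:flow-loop-count}, applied to flow lines instead of loops, with the framing-line identification of Remark~\ref{rmk:framing-line-canonical-identification}. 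Invertibility of each $\sigma_i$ follows because its matrix is triangular with respect to the partial order that moves mass into $a_i$, and the diagonal entries are units. These diagonal entries come from $b=c=0$ and equal $(-1)^{a_i}q^{a_i^2/2+a_i/2}x^{a_i}$.

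The main obstacle is making the invariance argument rigorous for open flow lines with boundary, rather than for closed loops. The fallback I would use is a direct verification of the braid relation on the explicit formulas. Distant commutation is immediate, since the two generators touch disjoint coordinates except through a shared neighbor $a_{i+1}$, and the multinomial splitting there is symmetric. For $\sigma_1\sigma_2\sigma_1=\sigma_2\sigma_1\sigma_2$ it suffices to work with $n=3$ acting on $(a_1,a_2)$. Expanding both sides, the coefficient comparison reduces to a $q$-Vandermonde (Chu–Vandermonde) identity for $q$-binomials, with matching powers of $q$ from the quadratic exponents. Tracking those quadratic exponents is the computation I expect to be fiddliest.
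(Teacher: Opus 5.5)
The weights you read off Figure~\ref{fig:braiding-knot-holder} match the paper: the half-twist on the $\alpha_i$ strip, free splittings, and two joins producing the trinomial. Two corrections to the bookkeeping.
\begin{itemize}
\item The factor $q^{a_i/2}$ comes from the $a_i$ strands linking the hyperbolic framing line $\gamma_h$ along $\alpha_i$. Only $q^{\frac{b+c}{2}}$ comes from $\ell_{\mathrm{pair}}$.
\item The $x$-exponent is the linking number with the braid $\beta$. It is not a free bookkeeping choice. Already for $n=3$, $m=1$, the braid relation holds only because the product $q^{\frac12}x^{\frac12}\cdot q^{\frac12}x^{\frac12}$ of the off-diagonal entries of $\sigma_1$ and $\sigma_2$ equals minus the diagonal entry $-qx$.
\end{itemize}

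The genuine gap is the representation property.

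Your main route needs isotopy invariance of the matrix entries themselves. This is not a relative version of the bifurcation argument for Theorem~\ref{mainthm:flow-loop-count}. That argument is about closed orbits, and there are none in $D^2_n\times[0,1]$. After closing up, it only controls traces. Invariance of traces does not force the stacked knot holders for $\sigma_i\sigma_{i+1}\sigma_i$ and $\sigma_{i+1}\sigma_i\sigma_{i+1}$ to give equal entries between fixed boundary states.

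Your fallback fails as stated, because the braid relation does not reduce to $n=3$. For $n\ge 4$, $\sigma_i$ draws mass from $a_{i-1}$ as well as $a_{i+1}$, and $\sigma_{i+1}$ draws from $a_{i+2}$ as well as $a_i$. The relation therefore involves the coordinates $a_{i-1},\dots,a_{i+2}$ and has to be checked in $B_4$ and $B_5$ too. The outer-arc terms never appear in the $n=3$ computation, so a single $q$-Vandermonde check for $(a_1,a_2)$ does not cover them. Distant commutation and your triangularity argument for invertibility are fine.

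The missing step is the one the paper uses to certify that this is a representation.
\begin{itemize}
\item First pass to the equivalent convention that counts only undercrossings. This is conjugation by the diagonal matrix $v_{\vec a}\mapsto (qx)^{\frac12\sum_j j a_j}v_{\vec a}$, which multiplies the $\vec a\to\vec a'$ entry by $(qx)^{\frac{c-b}{2}}$.
\item The weights become $(-1)^{a_i}q^{\frac{a_i(a_i-1)}{2}}\qbin{a_i+b+c}{a_i,b,c}_q(qx)^{a_i+c}$.
\item These are Martel's matrices for the Lawrence representation $L_{n,m}$ in the basis of standard code sequences \cite[Cor.~4.7]{Martel}, with $t=q^{-1}$ and $s=qx$.
\end{itemize}
Since $L_{n,m}$ is a representation, so is $V_{n,m}$. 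This needs neither an invariance theorem for open flow lines nor any four-coordinate $q$-identities. If you want to avoid Martel, you must either prove matrix-level isotopy invariance of $Z(\varphi)$, or carry out the direct verification on all four coordinates.
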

In the proposition above, the factor $q^{\frac{a_i + b + c}{2}}$ comes from the linking with the framing lines ($q^{\frac{a_i}{2}}$ from linking with $\gamma_h$ along $\alpha_i$, and $q^{\frac{b+c}{2}}$ from linking with $\ell_{\mathrm{pair}}$), and the degree of $x$ counts the linking number with the braid $\beta$ itself. 
While we have presented the linking number $\mathrm{lk}(\gamma, \ell_{\mathrm{fr}})$ (resp., $\mathrm{lk}(\gamma, \beta)$) as the average of the signed number of crossings where the flow line $\gamma$ goes under and over $\ell_{\mathrm{fr}}$ (resp., $\beta$), we could have instead only counted the signed number of crossings where $\gamma$ goes under $\ell_{\mathrm{fr}}$ (resp., $\beta$). 
This will yield an equivalent representation of the braid group, as the linking number remains the same once the braid is closed up. 
In this equivalent representation, the weight now takes a slightly different form: 
\[
(-1)^{a_i}
q^{\frac{{a_i}^2}{2} + \frac{a_i}{2} + c}
\qbin{a_i + b+c}{a_i, b, c}_q
x^{a_i +c}
=
(-1)^{a_i}
q^{\frac{{a_i(a_i - 1)}}{2}}
\qbin{a_i + b+c}{a_i, b, c}_q
(qx)^{a_i +c}. 
\]
Comparing this with the Lawrence representation $L_{n,m}$ written in the basis of ``standard code sequences'' as in \cite[Cor. 4.7]{Martel}\footnote{Martel's $t$ (resp., $s$) is our $q^{-1}$ (resp., $qx$).}, we immediately obtain: 
\begin{prop}\label{prop:equivalence-to-Lawrence}
The braid group representation $V_{n,m}$ in Proposition \ref{prop:braid-group-rep} is equivalent to the Lawrence representation $L_{n,m}$. 
\end{prop}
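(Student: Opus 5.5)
The plan is to check directly that, after one diagonal change of basis and one change of variables, the matrices of $\sigma_i$ in Proposition~\ref{prop:braid-group-rep} become Martel's matrices for $L_{n,m}$ in the basis of standard code sequences \cite[Cor.~4.7]{Martel}. Since both representations are determined by the images of the Artin generators, matching the generators suffices. Note that Proposition~\ref{prop:braid-group-rep} already gives a well-defined representation, so we never need to verify the braid relations by hand.

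\textbf{Step 1: pass to the ``under-crossing'' normalization.} Counting only crossings where $\gamma$ passes under $\ell_{\mathrm{fr}}$ (resp.\ under $\beta$) replaces the average-of-crossings convention. I will realize this as conjugation by a diagonal matrix $D$ with entries $D_{\vec a}=q^{\lambda(\vec a)}x^{\mu(\vec a)}$, where $\lambda,\mu$ are linear in $\vec a$. The concrete task is to find $\lambda,\mu$ such that
\[
D_{\vec a'}\,D_{\vec a}^{-1}\, q^{\frac{a_i+b+c}{2}}x^{\frac{(a_i+b)+(a_i+c)}{2}} \;=\; q^{\frac{a_i}{2}+c}\,x^{a_i+c}
\]
for $\vec a' = (\ldots,a_{i-1}-b,\,a_i+b+c,\,a_{i+1}-c,\ldots)$. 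Since the exponent discrepancy is $\tfrac{c-b}{2}$ in both the $q$-power and the $x$-power, a natural choice is $\lambda(\vec a)=\mu(\vec a)=\tfrac12\sum_j j\,a_j$ (up to sign). This works because moving $b$ points from $\alpha_{i-1}$ to $\alpha_i$ and $c$ points from $\alpha_{i+1}$ to $\alpha_i$ changes $\sum_j j\,a_j$ by $b-c$. I will fix the sign and treat the boundary cases $i=1$ and $i=n-1$ the same way, where $b=0$ or $c=0$ respectively. The result is the rewritten weight
\[
(-1)^{a_i}q^{a_i(a_i-1)/2}\qbin{a_i+b+c}{a_i,b,c}_q (qx)^{a_i+c}.
\]

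\textbf{Step 2: compare with Martel.} Substitute $t=q^{-1}$ and $s=qx$ into the formula of \cite[Cor.~4.7]{Martel}. Then match the indexing: Martel's code sequences record how many points lie on each arc between consecutive punctures, which is exactly $\mathcal S_{n,m}$. I will verify term by term that the sign, the power $t^{-a_i(a_i-1)/2}$, and the power $s^{a_i+c}$ agree, and that the $q$-multinomial agrees after inverting $t$. The inversion uses $\qbin{n}{k}_{q^{-1}}=q^{-k(n-k)}\qbin{n}{k}_q$, with any resulting monomial absorbed either into the diagonal rescaling of Step~1 or into Martel's own normalization.

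\textbf{Main obstacle.} The real work is the bookkeeping of conventions. Possible mismatches include orientation of the arcs, whether Martel's $\sigma_i$ is our $\sigma_i$ or its inverse, left- versus right-handed crossings, and whether his multinomial is in $t$ or $t^{-1}$. If the direct match fails, I would first try replacing $\sigma_i$ by $\sigma_i^{-1}$ or reversing the order of the punctures, since either change still gives an equivalent representation. I would then re-solve for the diagonal conjugating matrix. This is a linear problem in the exponents, so a consistent solution either exists and is found by comparing a single generator, or the convention must be adjusted.
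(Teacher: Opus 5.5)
Your proposal is correct and is essentially the paper's own argument: pass to the under-crossing normalization, then compare with Martel's Cor.~4.7 under $t=q^{-1}$, $s=qx$. Your explicit diagonal conjugation works with $\lambda=\mu=-\tfrac12\sum_j j\,a_j$ (the sign is minus, since $\sum_j j\,a_j$ changes by $b-c$, and the boundary cases $i=1,n-1$ check out the same way), and it makes precise the paper's one-line remark that the two normalizations give equivalent representations. One caution: your fallback of absorbing the factor from $\qbin{n}{k}_{q^{-1}}=q^{-k(n-k)}\qbin{n}{k}_q$ into a diagonal rescaling cannot work, because that factor is quadratic in the state while a diagonal rescaling only contributes $D_{\vec a'}/D_{\vec a}$; the paper asserts that the match with Martel is direct, so this fallback is not needed.
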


Note that, if we fill in the punctures of the disk with elliptic flow lines (and elliptic framing lines along them) and close up, we would be counting the flow loops in the suspension flow of a self-homeomorphism $\beta : D^2 \rightarrow D^2$ specified by the braid $\beta$. 
In other words, it is the flow loop count in the unknot complement $S^3 \setminus O \cong D^2 \times S^1$. 
But in the unknot complement, there is a much simpler flow with a single elliptic flow loop, so the flow loop count must be $\Phi_{S^3 \setminus O}(z,q) = 1-z$. 
Therefore, as a corollary of Theorem \ref{mainthm:flow-loop-count}, we have:
\begin{cor}
Suppose, for simplicity, that $\beta \in B_n$ is a braid that closes up to a knot (i.e., with a single component), and let $w$ denote the writhe of $\beta$. 
Then, 
\[
\sum_{\substack{m\geq 0 \\ 0\leq \epsilon \leq 1}} 
\Tr_{V_{n,m}}(\beta)\bigg\vert_{x = q^{-1+2\epsilon}} z^{m} \cdot (q^{w})^{\epsilon} (-z^n)^{\epsilon} = \Phi_{S^3 \setminus O}(z,q) = 1-z.
\]
\end{cor}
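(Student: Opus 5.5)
The plan is to compute the flow loop count of the unknot complement twice and compare. The first computation uses the suspension flow of $\beta$ on $D^2\times S^1 \cong S^3\setminus O$. The second uses the trivial flow with a single elliptic orbit. Theorem~\ref{mainthm:flow-loop-count} then identifies the two answers.

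For the first computation, cap off the punctured disk $D^2_n$ by filling each puncture $p_j$ with a small disk carrying a local maximum, i.e.\ a red base point. This turns the marking of Figure~\ref{fig:punctured-disk-marking} into a marking of $D^2$. The $n$ base points are permuted by $\beta$. Since the closure of $\beta$ is a knot, they form a single elliptic flow loop $\gamma_e$ of period $n$ in the $z$-grading, where $z$ is the meridian of $O$, i.e.\ one turn of the $S^1$ direction. The knot holder in the mapping torus is the closure of the knot holders for the Artin generators (Figure~\ref{fig:braiding-knot-holder}). Its hyperbolic flow loops are therefore obtained from flow lines on the stacked knot holder, closed up by identifying the top arcs with the bottom arcs. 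By Corollary~\ref{cor:knot-holder-existence} and the state sum of Section~\ref{subsec:knot-holder-state-sum}, $\Phi_{S^3\setminus O}$ is a sum over states. Each state consists of $\epsilon\in\{0,1\}$ on $\gamma_e$ together with an admissible labelling of strips. Closing up forces the bottom and top arc vectors to agree, so the $\epsilon=0$ part at total level $m=\sum a_i$ is exactly $\Tr_{V_{n,m}}(\beta)$ computed from Proposition~\ref{prop:braid-group-rep}. Each loop crosses the fiber $m$ times, giving $z^m$, while $\epsilon$ contributes $(-z^n)^\epsilon$ with the elliptic sign.

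The substance is identifying how the variable $x$ of Proposition~\ref{prop:braid-group-rep} specializes. In $V_{n,m}$, the power of $x$ records linking of flow lines with the braid strands $\beta$. After filling, these strands become the elliptic framing lines (with sign $+1$) together with, if $\epsilon=1$, the elliptic orbit itself. By the framing-line relation \eqref{eq:framing-line}, each unit of linking with $\ell_{\mathrm{fr}}$ contributes $q^{-1}$, so $x\mapsto q^{-1}$. If $\gamma_e$ is also counted, the self-linking term $\mathrm{lk}(\gamma,\gamma)$ adds $2\,\mathrm{lk}(\gamma_h,\gamma_e)$, giving a further factor $q^{2}$ per unit, so $x\mapsto q^{-1+2\epsilon}$. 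Finally, $\gamma_e$ itself contributes $q^{\mathrm{lk}(\gamma_e,\gamma_e)-\mathrm{lk}(\gamma_e,\ell_{\mathrm{fr}})}$. Its framing is the blackboard framing of the closed braid, with self-linking $w$. Its linking with its own framing line is absorbed by the convention of shifting it off, leaving $q^{w\epsilon}$. I expect this bookkeeping of framing and half-integer exponents to be the main obstacle. I would check it first on $\beta=\sigma_1\in B_2$, where both sides can be expanded by hand.

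For the second computation, the unknot complement admits a flow, rotation of the solid torus, whose only orbit is the core: one elliptic loop of degree $1$ with zero framing and no framing-line linking. Hence $\Phi=1-z$. Deformation invariance (Theorem~\ref{mainthm:flow-loop-count}) then equates the two computations, which gives the corollary.
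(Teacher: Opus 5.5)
Your overall route is the paper's. You fill the punctures with a repelling elliptic orbit and an elliptic framing line, close up to the suspension flow on $D^2\times S^1\cong S^3\setminus O$, read the $\epsilon=0$ part of the state sum as $\mathrm{Tr}_{V_{n,m}}(\beta)$, and compare with a one-orbit flow via Theorem~\ref{mainthm:flow-loop-count}. Your accounting of $z^m$, $(-z^n)^\epsilon$ and $x\mapsto q^{-1+2\epsilon}$ also agrees with the paper.

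The problem is the step you yourself flag, the factor $(q^w)^\epsilon$: its justification is wrong. The framing of $\gamma_e$ is not the blackboard framing by decree. By the convention of Section~\ref{subsec:framing}, you push $\gamma_e$ off the elliptic framing line $\ell_e\subset\ell_{\mathrm{fr}}$ along $\hat\beta$ to a curve $\gamma_e'$, and you frame $\gamma_e'$ by $\xi_{\mathrm{pert}}$. Near an index $+1$ zero, $\xi_{\mathrm{pert}}$ is radial up to homotopy. So the framing push-off of $\gamma_e'$ is isotopic to $\ell_e$ in the complement of $\gamma_e'$, which gives $\mathrm{lk}(\gamma_e',\gamma_e')=\mathrm{lk}(\gamma_e',\ell_e)$ for every choice of push-off (both equal $w$ for the blackboard one). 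Since $\ell_e$ carries sign $+1$, these two terms cancel exactly in $q^{\mathrm{lk}(\gamma_e',\gamma_e')-\mathrm{lk}(\gamma_e',\ell_{\mathrm{fr}})}$. Nothing survives from the self-linking.

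What remains, and what you never account for, is the linking of $\gamma_e$ with the hyperbolic framing lines: $\gamma_h$ (the saddles on the arcs $\alpha_i$) and the hyperbolic parts of $\ell_{\mathrm{pair}}$. These carry sign $-1$, so they contribute $q^{+\mathrm{lk}}$, and this is exactly where $q^{w}$ comes from, as the paper states. Your exponent comes out right only because two errors compensate: you treat $\mathrm{lk}(\gamma_e,\ell_{\mathrm{fr}})$ as zero, and you keep the self-linking. Adding the missing $\gamma_h$ term to your surviving ``self-linking $w$'' would give $q^{2w\epsilon}$.

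Your proposed test case makes this transparent. For $\beta=\sigma_1\in B_2$ there is no splitting or joining, so $\ell_{\mathrm{pair}}=\emptyset$. The line $\gamma_h$ is the core of the half-twisted strip swept out by $\alpha_1$, and $\hat\sigma_1$ is the boundary of that M\"obius band, winding once around $\gamma_h$. That winding supplies $q^{1}=q^{w}$. With $\mathrm{Tr}_{V_{2,m}}(\sigma_1)=(-1)^m q^{m(m+1)/2}x^m$, the identity becomes
\[
\sum_{m\ge0}(-1)^m q^{\frac{m(m-1)}{2}}z^m-\sum_{m\ge0}(-1)^m q^{\frac{(m+1)(m+2)}{2}}z^{m+2}=1-z .
\]
The same cancellation is what actually justifies your claim that the core orbit of the comparison flow has no net framing contribution: it sits on its own elliptic framing line, so it is not literally unlinked from $\ell_{\mathrm{fr}}$. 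Note also that a pure rotation does not make $\partial D^2$ a sink, as Section~\ref{subsec:setup} requires. You should instead compose an irrational rotation with an outward radial push.
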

In this formula, $\epsilon$ counts the elliptic flow loop along the closure of $\beta$, the power of $z$ keeps track of the homology class in $D^2 \times S^1 \cong S^3 \setminus O$. 
In the specialization $x=q^{-1+2\epsilon}$, the factor $q^{-1}$ is from the linking number with the elliptic framing line along the closure of $\beta$, and the factor $q^{2\epsilon}$ is from the linking with the elliptic flow loop. 
Finally, the factor $(q^w)^{\epsilon}$ is from the linking of the elliptic flow loop with the hyperbolic framing lines.

\subsection{Connection to Verma modules}
Let $V_\infty(x)$ denote the Verma module of the quantum group $U_q(\mathfrak{sl}_2)$ with generic highest weight $\lambda_x := \log_q x - 1$. 
Then, the universal $R$-matrix of $U_q(\mathfrak{sl}_2)$ induces a representation
\[
B_n \rightarrow \mathrm{Aut}(V_\infty(x)^{\otimes n}). 
\]
Explicitly, in some basis $v_0, v_1, v_2, \cdots$ of $V_\infty(x)$, 
\[
\sigma_i \cdot (v_{a_1} \otimes \cdots \otimes v_{a_n}) =  \sum_{a_i', a_{i+1}' \geq 0} \check{R}(x)_{a_i, a_{i+1}}^{a_i', a_{i+1}'} \cdot v_{a_1} \otimes \cdots \otimes v_{a_i'} \otimes v_{a_{i+1}'} \otimes \cdots \otimes v_{a_n},
\]
where
\[
\check{R}(x)_{i,j}^{i',j'} := 
\delta_{i+j, i'+j'} \;q^{jj'} (q x^{-1})^{\frac{j+j'+1}{2}} \qbin{i}{j'}_{q} (q^{j+1}x^{-1};q)_{i-j'}.
\]
Let $(V_\infty(x)^{\otimes n})_m$ denote the $q^{\frac{n\lambda_x}{2} - m}$-weight subspace of $V_\infty(x)^{\otimes n}$, i.e., the span of basis vectors $v_{a_1} \otimes \cdots \otimes v_{a_n}$ for which $a_1 + \cdots + a_n = m$. 
By restricting the braid group representation to this weight subspace and further restricting to the space of null vectors
\[
N_{n,m} := \ker E \cap (V_\infty(x)^{\otimes n})_m
\]
in that weight subspace, we obtain a subrepresentation of dimension $\binom{m + n-2}{m}$. 

Kohno's theorem \cite{Kohno}, as reviewed in \cite[Sec. 4.2]{Ito_Garside}, proves that this subrepresentation $N_{n,m}$ is equivalent to the Lawrence representation $L_{n,m}$. 
Since
\[
(V_\infty(x)^{\otimes n})_m = 
\bigoplus_{0\leq k\leq m} F^{m-k} N_{n,k} \cong 
\bigoplus_{0\leq k\leq m} N_{n,k},
\]
we can express the graded trace of the braid group representation on $V_\infty(x)^{\otimes n}$ in terms of the graded trace of Lawrence representations, or of $V_{n,m}$ using Proposition \ref{prop:equivalence-to-Lawrence};
this proves Theorem \ref{mainthm:Lawrence}. 
In the convention we have been using, we can summarize this as: 
\begin{thm}\label{thm:Kohno}
The graded trace of the braid group representation on $V_\infty(x^{-1})^{\otimes n}$ can be expressed in terms of flow loop counts: 
\[
\sum_{m\geq 0} \Tr_{(V_\infty(x^{-1})^{\otimes n})_m} (\beta)\; z^m= (qx)^{\frac{w}{2}} \frac{1}{1-z} \sum_{m\geq 0} \Tr_{V_{n,m}}(\beta) \;z^m,
\]
where both sides are elements of $\mathbb{Z}[q^{\pm \frac12}, x^{\pm \frac12}][[z]]$. 
\end{thm}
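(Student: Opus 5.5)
The plan is to decompose the weight spaces of $V_\infty(x^{-1})^{\otimes n}$ into null-vector pieces, identify each piece with $V_{n,k}$ up to an explicit scalar, and then sum the resulting geometric series in $z$. Concretely, I would start from the decomposition already recorded before the statement,
\[
(V_\infty(x)^{\otimes n})_m = \bigoplus_{0\leq k\leq m} F^{m-k} N_{n,k}.
\]
This holds because for generic highest weight the tensor product $V_\infty(x)^{\otimes n}$ is a direct sum of Verma modules. Each such Verma module is generated by a null vector of weight level $k$, and on it $F$ acts injectively. Since $F$ commutes with the braid group action (the $R$-matrix commutes with the coproduct), each summand $F^{m-k}N_{n,k}$ is a $B_n$-subrepresentation isomorphic to $N_{n,k}$. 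Therefore
\[
\Tr_{(V_\infty(x)^{\otimes n})_m}(\beta)=\sum_{k\le m}\Tr_{N_{n,k}}(\beta).
\]
Multiplying by $z^m$ and summing over $m$ gives
\[
\sum_m \Tr_{(V^{\otimes n})_m}(\beta)\,z^m=\frac{1}{1-z}\sum_k \Tr_{N_{n,k}}(\beta)\,z^k.
\]
This produces the factor $\tfrac1{1-z}$.

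Next I would identify $N_{n,k}$ with $V_{n,k}$. By Kohno's theorem, $N_{n,k}\cong L_{n,k}$, and by Proposition~\ref{prop:equivalence-to-Lawrence}, $L_{n,k}\cong V_{n,k}$. The catch is that both equivalences hold only up to normalization. The $R$-matrix $\check R$ carries an overall scalar $(qx^{-1})^{1/2}$ coming from the Cartan part; this is visible in the exponent $\tfrac{j+j'+1}{2}$ through its $+1$. Lawrence's representation, in turn, is normalized so that the generator acts on the lowest state with a specific leading term. Consequently each $\sigma_i^{\pm1}$ contributes a scalar of the form $(qx^{\mp1})^{\pm1/2}$, and their product over $\beta$ is a power of this scalar raised to the writhe $w$. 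Substituting $x\mapsto x^{-1}$, which exchanges Martel's variables appropriately, should turn this into exactly $(qx)^{w/2}$.

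To fix the normalization, I would compare the action of a single generator $\sigma_i$ on the lowest weight null vector, for instance the case $k=0$ where $N_{n,0}$ is spanned by $v_0^{\otimes n}$. There $\check R(x^{-1})^{0,0}_{0,0}=(qx)^{1/2}$, while $V_{n,0}$ is one-dimensional with $\sigma_i$ acting by $1$. This pins down the scalar per crossing. Because both the braid relation and the identification are equivariant, the same scalar then works uniformly for all $k$.

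The main obstacle is ensuring that the variable conventions match, namely Martel's $(t,s)=(q^{-1},qx)$ against the Verma parameter $\lambda_x$ with $x\mapsto x^{-1}$. One must also check that Kohno's equivalence introduces no additional $k$-dependent scalar. I would test this on $n=2$, where $N_{2,k}$ is one-dimensional, by computing the eigenvalue of $\check R$ on the null vector directly and checking it against the formula in Proposition~\ref{prop:braid-group-rep} with $b=c=0$, namely $(-1)^kq^{k^2/2+k/2}x^k$, times $(qx)^{1/2}$.
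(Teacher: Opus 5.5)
Your proposal is correct and follows essentially the same route as the paper: the decomposition $(V_\infty(x)^{\otimes n})_m=\bigoplus_{k\le m}F^{m-k}N_{n,k}$ produces the factor $\frac{1}{1-z}$, and Kohno's theorem combined with Proposition~\ref{prop:equivalence-to-Lawrence} identifies $N_{n,k}$ with $V_{n,k}$. Your normalization checks (the $k=0$ entry $(qx)^{1/2}$, and the $n=2$ eigenvalues $(qx)^{1/2}(-1)^kq^{k(k+1)/2}x^k$, which do hold) add detail the paper leaves to ``the convention we have been using.'' One small correction: the factor is uniform in $k$ because $(qx^{-1})^{1/2}$ is an overall scalar of $\check R(x)$, so before $x\mapsto x^{-1}$ each $\sigma_i^{\pm1}$ contributes $(qx^{-1})^{\pm1/2}$, not $(qx^{\mp1})^{\pm1/2}$; equivariance does not give this uniformity, since the braid relations are homogeneous and never rule out a rescaling.
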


As an application of Kohno's theorem, Ito \cite[Thm. 3.1]{Ito} showed that the MMR expansion of a knot $K$, presented as the closure of a braid $\beta$, can be expressed in terms of the traces of the Lawrence representation applied to $\beta$. 
Using Proposition \ref{prop:equivalence-to-Lawrence}, we rewrite Ito's theorem in terms of the representations $V_{n,m}$:\footnote{Ito's $z$ is our $x^{-1}$.}
\begin{thm}\label{thm:Ito}
Given a knot $K$ presented as the closure of a braid $\beta$ with $n$ strands and writhe $w$,  
\[
(x^{\frac12} - x^{-\frac12})\mathrm{MMR}_K(\hbar, x) = 
- q^{\frac{w -(n-1)}{2}} x^{\frac{w-n}{2}} \sum_{m\geq 0} (1 -q^{2m + (n-1)} x^n) \cdot q^{-m}\Tr_{V_{n,m}}(\beta),
\]
where the identity is understood in $\mathbb{Q}(x^{\frac12})[[\hbar]]$, after setting $q=e^\hbar$.\footnote{Ito's statement \cite[Thm. 3.1]{Ito} does not explicitly specify the ambient ring for the right-hand side. 
That the right-hand side lives in $\mathbb{Q}(x^{\frac12})[[\hbar]]$ is explained in the proof of Lemma~\ref{lem:inversion}.} 
\end{thm}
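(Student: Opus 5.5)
The plan is to start from Ito's theorem \cite[Thm.~3.1]{Ito}, stated in terms of traces of the Lawrence representations $L_{n,m}$, and transport it term by term via Proposition~\ref{prop:equivalence-to-Lawrence}. Ito's formula comes from the Kohno decomposition $(V_\infty^{\otimes n})_m \cong \bigoplus_{k\le m} F^{m-k}N_{n,k}$. The MMR expansion is a limit of quantum traces of $\beta$ on $V_\infty^{\otimes n}$, the Verma/colored Jones relation. Each $N_{n,k}$ contributes its Lawrence trace, weighted by the quantum-dimension factor of the Verma tower generated by $F$. That factor is what produces $(1-q^{2m+(n-1)}x^n)\,q^{-m}$ after the $K_{2\rho}$-twist is inserted into the trace.

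The concrete steps are as follows.

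\emph{Step 1.} Fix the dictionary between Ito's variables and ours. Ito's $z$ is our $x^{-1}$, Martel's $t$ is $q^{-1}$, and Martel's $s$ is $qx$. Rewrite Ito's right-hand side, $\sum_m (\cdots)\Tr_{L_{n,m}}(\beta)$, with these substitutions.

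\emph{Step 2.} Invoke Proposition~\ref{prop:equivalence-to-Lawrence} to replace $\Tr_{L_{n,m}}(\beta)$ by $\Tr_{V_{n,m}}(\beta)$. Traces are invariant under equivalence, so only the normalization conventions matter here. Note the two presentations discussed before Proposition~\ref{prop:equivalence-to-Lawrence}: the averaged-crossing one and the under-crossing one. They give conjugate representations once $\beta$ is closed, because the discrepancy is a coboundary depending only on the endpoint state. Hence their traces agree.

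\emph{Step 3.} Account for the overall monomial $-q^{\frac{w-(n-1)}{2}}x^{\frac{w-n}{2}}$. The writhe part comes from the framing of $\beta$: the factor $(qx)^{w/2}$ in Theorem~\ref{thm:Kohno} reflects the normalization of $\check R$ relative to our $\sigma_i$ action. The $q^{-(n-1)/2}x^{-n/2}$ part and the sign come from the pivotal element and the normalization $(x^{1/2}-x^{-1/2})$ against $1/[n]$.

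I expect the main obstacle to be the ambient ring. Ito's identity is stated without specifying where the right-hand side converges. The series $\sum_m q^{-m}\Tr_{V_{n,m}}(\beta)$ must be shown to lie in $\mathbb{Q}(x^{1/2})[[\hbar]]$ after setting $q=e^\hbar$, i.e.\ each $\hbar$-coefficient must be a rational function of $x$. My approach is as follows. Expand $q=e^\hbar$. The matrix entries of $\beta$ on $V_{n,m}$ are polynomials in $x^{\pm1/2}$ whose $\hbar$-coefficients are polynomial in $m$ and in the state entries $a_i$. Summing over $\mathcal{S}_{n,m}$ and over $m$ then gives generating sums of the form $\sum_m \mathrm{poly}(m)\,M^m$, where $M$ is the $q=1$ (Burau-type) transfer matrix. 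These sum to rational functions with denominators powers of $\det(1-M)$, essentially a power of $\Delta_K$.

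This step is what the footnote defers to Lemma~\ref{lem:inversion}. I would prove it by treating the full state sum as a trace of a transfer operator on $\bigoplus_m V_{n,m}$ and differentiating in $\hbar$. The finer point is checking that the specialization in Ito's argument is compatible with this expansion. I would handle that by comparing both sides at the level of Kohno's isomorphism with coefficients in $\mathbb{Q}(x^{1/2})[[\hbar]]$.
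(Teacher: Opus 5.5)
Your plan matches the paper's: Theorem~\ref{thm:Ito} is Ito's Theorem~3.1 transported through Proposition~\ref{prop:equivalence-to-Lawrence}, and the paper also derives it formally from Theorem~\ref{thm:Kohno} by adding the $K$- and $K^{-1}$-twisted graded traces (evaluating the generating function at $z=q^{-1}$ and $z=q$), whose $F$-tower factors $1/(1-q^{\mp1})$ combine into $(1-q^{2m+(n-1)}x^n)q^{-m}$ --- this two-term sum is the precise form of your quantum-dimension heuristic. For the ambient ring, the proof of Lemma~\ref{lem:inversion} does not use sums like $\sum_m \mathrm{poly}(m)M^m$; it gives each crossing independent parameters, so the $\hbar^0$ state sum equals $(1-\delta)/\det(I-\mathcal{B})$ by Foata--Zeilberger (a MacMahon-type identity over symmetric powers, which also deals with the non-convergence caused by negative crossings), and the higher $\hbar$-orders come from Rozansky's differential operators applied before specializing the parameters.
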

In fact, this follows formally from Theorem \ref{thm:Kohno}:
\begin{align*}
-\frac{x^{\frac12}-x^{-\frac12}}{q^{\frac12}-q^{-\frac12}} \mathrm{MMR}_K(\hbar, x)
&=
\sum_{m\geq 0} \Tr_{(V_\infty(x^{-1})^{\otimes n})_m}(\beta) \; \qty( (x^{-\frac12} q^{-\frac12})^n q^{-m} +  (x^{\frac12} q^{\frac12})^n q^{m}) \\ 
&\overset{\text{Thm.} \ref{thm:Kohno}}{=} (qx)^{\frac{w}{2}} \qty( 
\frac{(qx)^{-\frac{n}{2}}}{1-q^{-1}} \sum_{m \geq 0} \Tr_{V_{n,m}}(\beta) \; q^{-m}
+ \frac{(qx)^{\frac{n}{2}}}{1-q} \sum_{m \geq 0} \Tr_{V_{n,m}}(\beta) \; q^{m}
) \\
&= 
\frac{q^{\frac12}(qx)^{\frac{w-n}{2}}}{q^{\frac12}-q^{-\frac12}} \sum_{m\geq 0} (1 -q^{2m + (n-1)} x^n) \cdot q^{-m}\Tr_{V_{n,m}}(\beta).
\end{align*}

\section{Quantum group invariants from flow loops}\label{sec:BPS-q-series-from-flow-loops}
In this section, we prove Theorem \ref{mainthm:flow-equals-quantum}. 
\begin{lem}
Theorem \ref{mainthm:flow-equals-quantum} holds for all braid-positive knots. 
\end{lem}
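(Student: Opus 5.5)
Let $K$ be the closure of a positive braid $\beta\in B_n$ with writhe $w$. The plan is to compute $\Phi_{S^3\setminus K}$ on the knot holder of Example~\ref{eg:braid-homogeneous-link-knot-holder}, recognise the result as a combination of the traces $\Tr_{V_{n,m}}(\beta)$, and then compare with Theorem~\ref{thm:Ito}.

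\textbf{State sum.} On this knot holder the elliptic flow loop $\gamma_e$ is the braid axis, and the fiber is the Stallings surface. Cutting $S^3\setminus K$ along a meridional disk of the braid axis exhibits the knot holder as the closure of the braid knot holder of Figure~\ref{fig:braiding-knot-holder}. The role of the punctured disk $D^2_n$ is now played by the fiber, with the arcs $\alpha_i$ matching the bands. I will therefore write the state sum of Section~\ref{subsec:knot-holder-state-sum} as
\[
\Phi_{S^3\setminus K}(x,q)=\sum_{\substack{m\geq 0\\ 0\leq\epsilon\leq1}}\Tr_{V_{n,m}}(\beta)\big|_{x\mapsto q^{c_\epsilon}x^{d}}\,(\text{monomial in }q,x)\,(-x^{n})^{\epsilon},
\]
where $\epsilon$ records whether $\gamma_e$ (of period $n$) is used. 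The specialisations come from linking with $\gamma_e$ and with the framing lines, exactly as in the unknot corollary of Section~\ref{subsec:braid-knot-holder}. The linking numbers with $\gamma_e$ and $\gamma_h$ are read off as in Example~\ref{eg:trefoil-state-sum}; there they produced $q^{2\epsilon a}$ and $q^{\frac{3a}{2}+\epsilon-a}$.

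The main bookkeeping point is the variable change. The variable $z$ of the disk picture, which records crossings of $\alpha_i$ (i.e.\ the number $m$ of points), must be identified with $x$ in $S^3\setminus K$. The Lawrence parameter, which records linking with the braid strands, becomes a power of $q$, because the strands are now the knot $K$ itself, to which linking is measured through $\gamma_e$ and $\ell_{\mathrm{fr}}$. So I expect a formula of the shape
\[
\Phi=\sum_{m}\bigl(1-q^{2m+\cdots}x^{n}\bigr)\,q^{\cdots m}\,x^{m}\Tr_{V_{n,m}}(\beta)\big|_{\text{spec}},
\]
with the factor $1-q^{2m+\cdots}x^n$ produced by the two values of $\epsilon$. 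This is structurally the same as Ito's sum in Theorem~\ref{thm:Ito}, but with the roles of the variables exchanged.

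\textbf{Comparison with $\widehat Z$.} For braid-positive knots, \cite{Park_large-color, Park_inverted} define $\widehat Z_{S^3\setminus K}$ as the $R$-matrix state sum with Verma modules $V_\infty$, expanded in the positive-$x$ direction (inverted state sum). By Kohno (Theorem~\ref{thm:Kohno}), its graded trace equals the Lawrence traces. Positivity makes every $\check R$ entry a finite sum, so the graded traces lie in $\mathbb{Z}[q^{\pm1}][[x]]$ termwise. I will then match term by term the expression for $\Phi$ with the expression for $\widehat Z$ obtained by substituting Theorem~\ref{thm:Kohno} into the definition of $\widehat Z$ as the weight-graded supertrace on $V_\infty(x)^{\otimes n}$. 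This is the same formal computation as the derivation of Theorem~\ref{thm:Ito}, without setting $q=e^\hbar$. If needed, I will also use Theorem~\ref{thm:Ito} to identify both sides via uniqueness of the MMR resummation: both sides lie in $\mathbb{Z}[q^{\pm1}][[x]]$, and their MMR expansions agree.

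\textbf{Main obstacle.} The hardest step will be the exact normalisation check. I must confirm that the specialisation $x\mapsto q^{c}x^{d}$ of the Lawrence variable, together with the framing monomials from $\gamma_e$, $\gamma_h$ and $\ell_{\mathrm{pair}}$ on the closed-up holder, reproduces precisely the weight grading $q^{-m}$ and the prefactor $(qx)^{w/2}$ on the quantum side, up to the overall monomial allowed by $\doteq$. I would first calibrate the conventions on the trefoil $\sigma_1^3$, where Example~\ref{eg:trefoil-state-sum} gives both sides explicitly, and then argue that the local contributions are multiplicative per crossing, so the calibration extends to all positive braids.
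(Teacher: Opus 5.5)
There is a genuine error at the step you yourself call the main bookkeeping point: your variable identification is backwards. In $S^3\setminus K$ the closure of $\beta$ \emph{is} $K$. So the Lawrence variable $x$ in $\Tr_{V_{n,m}}(\beta)$, which records linking with the braid strands, is already the homology variable of $S^3\setminus K$ and must not be specialized. Linking with $K$ is measured by the homology class, not through $\gamma_e$ and $\ell_{\mathrm{fr}}$.

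What becomes powers of $q$ is the number $m$ of points on the arcs, i.e.\ linking with the braid axis. The axis now carries both the elliptic flow loop $\gamma_e$ and an elliptic framing line. This produces three factors: $q^{-m}$ from the hyperbolic loops versus the elliptic framing line, $q^{2\epsilon m}$ from the hyperbolic loops versus $\gamma_e$, and $q^{\epsilon(n-1)}$ from $\gamma_e$ versus the hyperbolic framing lines.

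You copied the specialization pattern of the unknot corollary, but there the roles are reversed: the closure is the elliptic loop and the axis is the removed knot. With the correct bookkeeping,
\[
\Phi_{S^3\setminus K}(x,q)=\sum_{m\ge 0,\ 0\le\epsilon\le 1} q^{2\epsilon m}\,\Tr_{V_{n,m}}(\beta)\,(-x^n)^{\epsilon}\,q^{\epsilon(n-1)-m}=\sum_{m\ge 0}\bigl(1-q^{2m+(n-1)}x^n\bigr)q^{-m}\Tr_{V_{n,m}}(\beta),
\]
with no extra factor $x^m$. Your expected shape already fails on the trefoil $\sigma_1^3$. There the $x$-degree is $3m$ and comes entirely from the trace, and $\Phi$ has no $x^1$ term. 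Your formula would produce a nonzero $x^1$ term from $m=1$, $\epsilon=0$.

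This matters for the rest of your argument. The count is not Ito's sum ``with the roles of the variables exchanged'', which would not let you invoke Theorem~\ref{thm:Ito}. It is literally the sum on the right-hand side of Theorem~\ref{thm:Ito}. Hence it agrees with the MMR expansion up to the monomial $-q^{\frac{w-(n-1)}{2}}x^{\frac{w-n}{2}}$. Since $\Phi\in\mathbb{Z}[q^{\pm1}][[x]]$, uniqueness of the resummation from \cite{Park_large-color} then gives $\Phi\doteq\widehat Z_{S^3\setminus K}$. That is exactly your fallback, and it is the paper's route.

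Your primary route, matching against a graded trace on $V_\infty^{\otimes n}$, is unnecessary: the Verma-module expression is only a formal identity, and no inversion is needed for positive braids. Finally, derive the normalization from the linking numbers above rather than calibrating on the trefoil. A factor such as $q^{\epsilon(n-1)}$ depends on $n$, which an $n=2$ example cannot detect. Likewise $q^{-m}$ and $q^{2\epsilon m}$ are per-period contributions, not local per-crossing ones.
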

\begin{proof}
If a knot $K$ is the closure of a positive braid $\beta$ with $n$ strands and $w$ crossings, then the corresponding knot holder $\KH_K$ constructed in Example \ref{eg:braid-homogeneous-link-knot-holder} is the same as the knot holder $\KH_\beta$ for the braid $\beta$ constructed in Section \ref{subsec:braid-knot-holder}, but now with an elliptic flow loop and an elliptic framing line along the braid axis. 
It follows that the flow loop count is
\begin{align*}
\Phi_{S^3 \setminus K} (x,q) &= 
\sum_{\substack{m\geq 0 \\ 0\leq \epsilon \leq 1}} q^{2\epsilon m} \Tr_{V_{n,m}}(\beta)\;(-x^n)^{\epsilon} \cdot q^{\epsilon(n-1) -m} \\
&= \sum_{m\geq 0} (1 -q^{2m + (n-1)} x^n) \cdot q^{-m}\Tr_{V_{n,m}}(\beta),
\end{align*}
where $\epsilon$ counts the elliptic flow loop along the braid axis, the factor $q^{2\epsilon m}$ is from the linking between elliptic and hyperbolic flow loops, $q^{\epsilon(n-1)}$ is from the linking of the elliptic flow loop with the hyperbolic framing lines, and $q^{-m}$ is from the linking of the hyperbolic flow loops with the elliptic framing line along the braid axis. 

Comparing with Theorem \ref{thm:Ito}, we see that this agrees (up to a monomial factor) with the MMR expansion. 
As shown in \cite{Park_large-color}, this MMR expansion can be resummed in a unique way into a power series in $q$ and $x$, which gives the BPS $q$-series $\widehat{Z}_{S^3 \setminus K}(x,q)$. 
Therefore, 
\[
\Phi_{S^3 \setminus K} (x,q) = - q^{\frac{(n-1)-w}{2}} x^{\frac{n-w}{2}} \widehat{Z}_{S^3 \setminus K}(x,q)
\]
as desired. 
\end{proof}

In the rest of this section, we will extend this result to all braid-homogeneous knots. 
The idea of proof is very similar to that of the \emph{inverted state sum} introduced in \cite{Park_inverted}. 

\begin{proof}[Proof of Theorem \ref{mainthm:flow-equals-quantum}]
Let $K$ be a knot which is the closure of a homogeneous braid $\beta$. 
Let $\KH_\beta$ denote the knot holder for braid $\beta$ constructed in Section \ref{subsec:braid-knot-holder} and $\KH_K$ denote the knot holder for $K$ constructed in Example \ref{eg:braid-homogeneous-link-knot-holder}. 
Note that $\KH_\beta$ is in general \emph{not} a knot holder for the flow on the fibered knot complement $S^3 \setminus K$. 

Starting from the right-hand side of Theorem \ref{thm:Ito}, which is a state sum on $\KH_\beta$ (together with an extra elliptic flow loop and an elliptic framing line along the braid axis), we will ``invert'' all the states on the columns of strips corresponding to negative crossings. 
We will then observe that this inverted state sum on $\KH_\beta$ can be naturally identified (up to some monomial) with the state sum on the actual knot holder $\KH_K$ for the fibered knot complement $S^3 \setminus K$. 

\textbf{Step 1: Inverted state sum on $\KH_\beta$.} 
Let $\gamma_e$ denote the braid axis of $\beta$ where we place both the elliptic flow loop and an elliptic framing line. 
The right-hand side of Theorem \ref{thm:Ito} is the state sum
\[
Z_{\beta}(\hbar, x)
:= - q^{\frac{w -(n-1)}{2}} x^{\frac{w-n}{2}}
\sum_{\sigma \in \mathcal{S}(\mathcal{H}_\beta, \gamma_e)} w(\sigma),
\]
where, as in Section \ref{subsec:knot-holder-state-sum}, $\mathcal{S}(\mathcal{H}_\beta, \gamma_e)$ denotes the set of all states, so that $\sigma \in \mathcal{S}(\mathcal{H}_\beta, \gamma_e)$
assigns a nonnegative integer to each strip of $\KH_\beta$ and either $0$ or $1$ to $\gamma_e$. 
As stated in Theorem~\ref{thm:Ito}, this state sum should be understood in terms of its perturbative expansion, as an element of $\mathbb{Q}(x^{\frac12})[[\hbar]]$; see also the proof of Lemma~\ref{lem:inversion} below. 

Define the set of inverted states $\mathcal{S}^{\mathrm{inv}}(\mathcal{H}_\beta, \gamma_e)$ in the same way as the usual states $\mathcal{S}(\mathcal{H}_\beta, \gamma_e)$, except that for the strips on the columns of negative crossings, we assign (strictly) negative integers instead of the usual nonnegative integers. 
Define the corresponding inverted state sum to be
\[
Z_{\beta}^{\mathrm{inv}}(\hbar, x)
:= - q^{\frac{w -(n-1)}{2}} x^{\frac{w-n}{2}}
\sum_{\sigma \in \mathcal{S}^{\mathrm{inv}}(\mathcal{H}_\beta, \gamma_e)} w(\sigma),
\]
where the weights $w(\sigma)$ of the inverted states are defined in the same way as before.\footnote{The integers that a state $\sigma$ assigns appear in powers of $(-1)$, $q$, and $x$, as well as in $q$-binomials in the weight $w(\sigma)$, and they all admit natural extensions to all (not just nonnegative) integers. } 
The following lemma allows us to pass to the inversion. 
\begin{lem}\label{lem:inversion}
The inversion preserves the state sum up to a sign:
\[
Z_{\beta}(\hbar, x) = (-1)^{\mathrm{col}_-} Z_{\beta}^{\mathrm{inv}}(\hbar, x)
\]
where both sides are elements of $\mathbb{Q}(x^{\frac12})[[\hbar]]$, and $\mathrm{col}_-$ denotes the number of columns of negative crossings in the homogeneous braid $\beta$. 
\end{lem}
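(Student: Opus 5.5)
The plan is to work column by column, following the analogous step in the inverted state sum of \cite{Park_inverted}. The basic mechanism is the rational-function identity
\[
\sum_{k\geq 0} f(k)\, y^k \;=\; -\sum_{k<0} f(k)\, y^k ,
\]
valid for $f$ a quasi-polynomial in $q^k$ (times $q$-binomials that extend to negative arguments). Both sides are expansions of the same rational function, one around $y=0$ and one around $y=\infty$.

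First I will make precise the sense in which $Z_\beta(\hbar,x)$ lies in $\mathbb{Q}(x^{\frac12})[[\hbar]]$. Setting $q=e^\hbar$ and expanding each weight $w(\sigma)$ in $\hbar$, every coefficient of $\hbar^d$ becomes a finite sum of terms of the form
\[
\mathrm{poly}(\text{state labels})\times \bigl(\text{monomial in } x^{\pm\frac12}\bigr)^{\text{labels}}.
\]
Summing over the labels in a column then gives a rational function of $x^{\frac12}$, via $\sum_k k^j y^k = (y\,\partial_y)^j \frac{1}{1-y}$. To make this work I need to check that the sum over states in a column of negative crossings is geometric in an appropriate variable. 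Concretely, I will use the admissibility condition to parametrize the states column by column: within each column the strips' labels are determined by a few free integers, as in Proposition~\ref{prop:braid-group-rep}. The $q$-binomials $\qbin{a+b+c}{a,b,c}_q$ then expand in $\hbar$ as polynomials in the labels. This step also supplies the justification promised in the footnote to Theorem~\ref{thm:Ito}.

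Second, I will apply the identity column by column. Fix a column $i$ of negative crossings, and hold the labels in all other columns fixed. The contribution of column $i$, order by order in $\hbar$, is a sum over its free labels $k\geq 0$ of $P(k)\,y^{k}$, where $y$ is a monomial in $x^{\pm1}$ (for negative crossings it comes from $x^{-1}$-type weights) and $P$ is a polynomial. Its regularized value is a rational function $R(y)$. The sum over $k<0$ of the same expression converges in the opposite region and equals $-R(y)$, since
\[
\sum_{k\in\mathbb{Z}} P(k)\,y^k = 0
\]
as rational functions. This produces one sign $(-1)$ per column, giving the total factor $(-1)^{\mathrm{col}_-}$.

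Two issues need care here. One is that a single column carries several labels (one per crossing, tied together by the splitting and joining admissibility constraints), whereas the identity above concerns a single summation index. I would reduce to one free ``flux'' parameter per column by observing that the labels on a column of crossings of a single generator $\sigma_i$ are coupled. Homogeneity is essential at this point, because it means no positive crossing sits in column $i$. Ideally every column then has a single overall grading parameter $k$, and the other labels range over finite sets determined by differences, which are invariant under $k\mapsto$ negative. If instead the column has several independent labels, I would apply the identity to each label separately. That would produce several signs, so I would need to check that the number of independent negated labels per column is odd, or, more naturally, equal to one after the reparametrization. The second issue is the extension of $q$-binomials to negative entries. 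I will use the convention $\qbin{-n}{k}_q = (-1)^k q^{\cdots}\qbin{n+k-1}{k}_q$ and check that $\hbar$-expanded weights of inverted states are the analytic continuations in the labels of the original ones, so that $P$ really is the same polynomial.

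I expect the main obstacle to be this bookkeeping of independent labels per column together with the boundary terms. Where labels couple to neighboring columns through the splitting counts $b,c$ in Proposition~\ref{prop:braid-group-rep}, I need the range constraints $0\le b\le a_{i-1}$ to transform consistently under inversion. My first attempt will exploit the fact that the $q$-trinomial vanishes outside the admissible range once negative indices are interpreted via the extended convention, so that the summation ranges can be taken over all of $\mathbb{Z}$. Once that is done, the rational-function identity can be applied cleanly.
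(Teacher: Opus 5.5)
Your route differs from the paper's, and as written it has a gap at its foundation: Step 1 does not give $Z_\beta(\hbar,x)$ a meaning.

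With $x$ as the only variable, columns of positive crossings carry positive powers of $x$ and columns of negative crossings carry negative powers. The columns are coupled through the split amounts. So already at order $\hbar^0$ infinitely many states contribute to a single power of $x$; for $(\sigma_1\sigma_2^{-1})^2$, for instance, all split-free states with equal labels on the two columns do. The full multi-sum therefore has no domain of convergence.

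Summing one column at a time does not fix this. The range and the polynomial degree of a column sum depend on the neighbours' split amounts, and you must still sum the resulting rational functions over everything else. Moreover, the identity has to hold for the specific perturbative meaning of $Z_\beta$ under which Theorem~\ref{thm:Ito} is valid, not for an ad hoc resummation. Summing the negative columns first and re-expanding at $x=0$ would simply \emph{define} $Z_\beta$ as $\pm Z^{\mathrm{inv}}_\beta$.

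The paper supplies the missing framework by giving each crossing independent parameters, plus $\delta$ for the braid axis.
\begin{itemize}
\item The classical parametrized sum converges near $0$ and equals $(1-\delta)/\det(I-\mathcal{B})$ by Foata--Zeilberger.
\item Rozansky's lemmas give the $\hbar^d$-coefficients as $\mathsf{D}_d$ applied to this rational function. Each coefficient is a classical multinomial times a polynomial in the labels, not a polynomial in the labels as you state.
\item The same $\mathsf{D}_d$ serve for the inverted sum.
\end{itemize}
The lemma thus reduces to an identity of rational functions. The paper proves it as $\det(I-\mathcal{B}_{\mathrm{inv}})=(-1)^{\mathrm{col}_-}w_0\det(I-\mathcal{B})$, using the bijection of simple multi-cycles obtained by swapping used and unused strips in each inverted column. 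Without such a multivariable setting, your column-by-column use of $\sum_{k\in\mathbb{Z}}P(k)y^k=0$ is not justified, since it moves one column's variable from $|y|<1$ to $|y|>1$ while all other sums must stay convergent.

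The per-column step is also set up incorrectly. You reduce to ``one grading parameter $k$, with the other labels ranging over finite sets invariant under $k\mapsto$ negative.'' This is false: the amounts column $i$ splits off to its neighbours are bounded by the column-$i$ label in the original sum, but unbounded in the inverted sum.

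What works is to fix every label outside column $i$, including all split amounts into and out of it. Then every label of column $i$ is $k$ plus a fixed offset, and the original states are exactly $k\ge k_{\min}$. The complementary range $k<k_{\min}$ consists of the inverted states together with a band of ``mixed'' states, in which column $i$ carries labels of both signs.

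The reciprocity identity produces minus the inverted sum only because the continued weight vanishes on that band. Going once around the closed column, a mixed state must jump from some $\ell<0$ to $\ell+g\ge 0$ at one of the column's own crossings. There the factor $\prod_{j=1}^{g}[\ell+j]_q$, the continuation of $[\ell+g]_q!/[\ell]_q!$, is zero. Your proposed mechanism (``the trinomial vanishes outside the admissible range, so sum over all of $\mathbb{Z}$'') would kill the inverted states as well.

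Three repairs are needed: the parametrized regularization, this vanishing on mixed states, and an analytic continuation in each column variable around (not through) $y=1$. With them, your reciprocity argument could plausibly replace the paper's determinant identity by a more local argument. As it stands, the proposal does not prove the lemma.
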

\begin{proof}
The proof is completely analogous to the proof of \cite[Thm.~1]{Park_inverted}, so we will be concise. 

Firstly, similarly to Rozansky's proof \cite{Rozansky} of the MMR conjecture, we can introduce a ``parametrized'' analog of the state sum that gives $Z_{\beta}(\hbar, x)$. 
That is, recall from Proposition~\ref{prop:braid-group-rep} that, in the state sum defining $Z_{\beta}(\hbar, x)$, we have the following weight 
\[
\vcenter{\hbox{
\begin{tikzpicture}
\draw (1.75, 1) to[out=112.5, in=-90] (1.5, 2);
\draw (-0.75, 1) to[out=77.5, in=-90] (-0.5, 2);
\draw (1.5, 0) to[out=90, in=-90] (0, 2);
\filldraw[white, opacity=0.7] (1, 0) to[out=90, in=-90] (0, 2) -- (1, 2) to[out=-90, in=90] (0, 0) -- cycle;
\draw (1, 0) to[out=90, in=-90] (0, 2);
% \draw[green, ->] (-1, 0) -- (-1, 2);
% \draw[green, ->] (0.5, 0) -- (0.5, 2);
% \draw[green, ->] (2, 0) -- (2, 2);
% \draw[green, ->] (1.75, 1) to[out=150, in=-90] (0.5, 2);
% \draw[red, ->] (1.75, 1) to[out=120, in=-90] (0.5, 1.8) -- (0.5, 2);
% \draw[red, ->] (-0.75, 1) to[out=60, in=-90] (0.5, 1.8) -- (0.5, 2);
\draw[white, line width=4] (0, 0) to[out=90, in=-90] (1, 2);
\draw[white, line width=4] (-0.5, 0) to[out=90, in=-90] (1, 2);
\filldraw[white, opacity=0.7] (-0.5, 0) to[out=90, in=-90] (1, 2) -- (0, 2) to[out=-90, in=77.5] (-0.75, 1) -- cycle;
% \draw[green, ->] (-0.75, 1) to[out=30, in=-90] (0.5, 2);
\draw (-1.5, 0) -- (-0.5, 0);
\draw (-1.5, 2) -- (-0.5, 2);
\draw (-1.5, 0) -- (-1.5, 2);
\draw (0, 0) -- (1, 0);
\draw (0, 2) -- (1, 2);
\draw (1.5, 0) -- (2.5, 0);
\draw (1.5, 2) -- (2.5, 2);
\draw (2.5, 0) -- (2.5, 2);
\draw (0, 0) to[out=90, in=-90] (1, 2);
\draw (1.75, 1) to[out=112.5, in=-90] (1, 2);
\draw (-0.75, 1) to[out=77.5, in=-90] (0, 2);
\draw (-0.5, 0) to[out=90, in=-90] (1, 2);
\node[below] at (-1, 0){$a$};
\node[below] at (0.5, 0){$b$};
\node[below] at (2, 0){$c$};
\node[above] at (-1, 2){$a'$};
\node[above] at (0.5, 2){$b'$};
\node[above] at (2, 2){$c'$};
\draw[thick, ->] (0.5, 0.1) -- (0.5, 0.4);
\draw[thick, ->] (0.5, 1.6) -- (0.5, 1.9);
\draw[thick, ->] (2, 0.1) -- (2, 0.4);
\draw[thick, ->] (2, 1.6) -- (2, 1.9);
\draw[thick, ->] (-1, 0.1) -- (-1, 0.4);
\draw[thick, ->] (-1, 1.6) -- (-1, 1.9);
\end{tikzpicture}
}}
\;\;=\;\;
\delta_{a+b+c, a'+b'+c'}
(-1)^{b}
q^{\frac{b^2+b'}{2}}
\qbin{b'}{a-a', b, c-c'}_{q}
x^{\frac{b+b'}{2}}
=: W_{a,b,c}^{a',b',c'}
\]
for each positive crossing, and the weight $W_{a,b,c}^{a',b',c'} \bigg\vert_{\substack{q\leftrightarrow q^{-1}, \\ x\leftrightarrow x^{-1}}}$ for each negative crossing. 
The parametrized analog of this weight $W_{a,b,c}^{a',b',c'}$ can be defined to be
\[
\mathsf{W}_{a,b,c}^{a',b',c'}(\mathsfgreek{\alpha}, \mathsfgreek{\beta}, \mathsfgreek{\gamma}) := 
\delta_{a+b+c, a'+b'+c'}
\binom{b'}{a-a', b, c-c'}
\mathsfgreek{\alpha}^{a-a'}
\mathsfgreek{\beta}^{b}
\mathsfgreek{\gamma}^{c-c'},
\]
so that
\[
\mathsf{W}_{a,b,c}^{a',b',c'}(\mathsfgreek{\alpha}, \mathsfgreek{\beta}, \mathsfgreek{\gamma}) \bigg\vert_{\substack{\mathsfgreek{\alpha} = x^{\frac12}, \\ \mathsfgreek{\beta} = -x, \\ \mathsfgreek{\gamma} = x^{\frac12}}}
=
W_{a,b,c}^{a',b',c'} \bigg\vert_{q=1}.
\]
The parametrized analog for the weight of a negative crossing can be defined analogously to be $\mathsf{W}_{a,b,c}^{a',b',c'}(\mathsfgreek{\alpha}^{-1}, \mathsfgreek{\beta}^{-1}, \mathsfgreek{\gamma}^{-1})$. 
We also introduce a parameter $\mathsfgreek{\delta}$ for the elliptic flow loop along the braid axis; 
in the parametrized state sum, it will simply contribute an overall factor of $\sum_{0\leq \epsilon \leq 1} (-1)^{\epsilon}\mathsfgreek{\delta}^{\epsilon} = 1-\mathsfgreek{\delta}$. 

Let $\mathsf{Z}_\beta(\vec{\mathsfgreek{\alpha}}, \vec{\mathsfgreek{\beta}}, \vec{\mathsfgreek{\gamma}}, \mathsfgreek{\delta})$ denote the resulting parametrized state sum, where $\vec{\mathsfgreek{\alpha}}, \vec{\mathsfgreek{\beta}}, \vec{\mathsfgreek{\gamma}}$ denote the set of parameters; since there are $3$ parameters $\alpha_c, \beta_c, \gamma_c$ for each crossing $c$, the total number of parameters is $3$ times the number of crossings, plus $1$ for $\mathsfgreek{\delta}$. 
It follows from the Foata--Zeilberger formula \cite{FoataZeilberger} that 
\begin{equation}\label{eqn:parametrized-state-sum}
\mathsf{Z}_\beta(\vec{\mathsfgreek{\alpha}}, \vec{\mathsfgreek{\beta}}, \vec{\mathsfgreek{\gamma}}, \mathsfgreek{\delta}) = 
\frac{1 - \mathsfgreek{\delta}}{\det (I - \mathcal{B})} 
\in \mathbb{Q}(\vec{\mathsfgreek{\alpha}}, \vec{\mathsfgreek{\beta}}, \vec{\mathsfgreek{\gamma}}, \mathsfgreek{\delta}),
\end{equation}
where $\mathcal{B}$ denotes the transition matrix of the associated Markov chain; the rows and columns of this matrix are labeled by the strips of our knot holder away from the splitting and joining charts, and the entries of $\mathcal{B}$ are either $0$, $1$, or one of the parameters $\mathsfgreek{\alpha}^{\pm 1}$, $\mathsfgreek{\beta}^{\pm 1}$, or $\mathsfgreek{\gamma}^{\pm 1}$. 

From the construction of the parametrized weights, we immediately have
\[
\mathsf{Z}_\beta(\vec{\mathsfgreek{\alpha}}, \vec{\mathsfgreek{\beta}}, \vec{\mathsfgreek{\gamma}}, \mathsfgreek{\delta}) \bigg\vert_{\substack{\mathsfgreek{\alpha} = x^{\frac12}, \\ \mathsfgreek{\beta} = -x, \\ \mathsfgreek{\gamma} = x^{\frac12} , \\ \mathsfgreek{\delta} = x^n}}
=
Z_{\beta}(\hbar, x) \bigg\vert_{\hbar = 0}.
\]
What is more, it follows from Rozansky's lemmas \cite{Rozansky} (see also \cite[Appendix~A]{Park_skeins} for a review) that there is a sequence of differential operators $\mathsf{D}_d$, $d\geq 0$ (with $\mathsf{D}_0 = 1$) on the parameters, such that
\[
\sum_{d\geq 0} \hbar^d\,\mathsf{D}_d\cdot \mathsf{Z}_\beta(\vec{\mathsfgreek{\alpha}}, \vec{\mathsfgreek{\beta}}, \vec{\mathsfgreek{\gamma}}, \mathsfgreek{\delta}) \bigg\vert_{\substack{\mathsfgreek{\alpha} = x^{\frac12}, \\ \mathsfgreek{\beta} = -x, \\ \mathsfgreek{\gamma} = x^{\frac12} , \\ \mathsfgreek{\delta} = x^n}}
=
Z_{\beta}(\hbar, x).
\]

We can define the parametrized version of the inverted state sum in the same way, by summing over negative states (rather than non-negative states) on the inverted strips. 
The result is
\[
\sum_{d\geq 0} \hbar^d\,\mathsf{D}_d\cdot\mathsf{Z}^{\mathrm{inv}}_\beta(\vec{\mathsfgreek{\alpha}}, \vec{\mathsfgreek{\beta}}, \vec{\mathsfgreek{\gamma}}, \mathsfgreek{\delta}) \bigg\vert_{\substack{\mathsfgreek{\alpha} = x^{\frac12}, \\ \mathsfgreek{\beta} = -x, \\ \mathsfgreek{\gamma} = x^{\frac12} , \\ \mathsfgreek{\delta} = x^n}}
=
Z^{\mathrm{inv}}_{\beta}(\hbar, x),
\]
with precisely the same differential operators $\mathsf{D}_d$ as before the inversion; 
the fact that they remain the same is a simple consequence of the basic fact that a polynomial with infinitely many zeros must vanish identically (see e.g., \cite[Lem.~A.3]{Park_skeins}). 
It follows that it is enough to prove the desired statement in the classical limit but with parameters, i.e., that
\[
\mathsf{Z}_\beta(\vec{\mathsfgreek{\alpha}}, \vec{\mathsfgreek{\beta}}, \vec{\mathsfgreek{\gamma}}, \mathsfgreek{\delta}) 
= 
(-1)^{\mathrm{col}_-}
\mathsf{Z}^{\mathrm{inv}}_\beta(\vec{\mathsfgreek{\alpha}}, \vec{\mathsfgreek{\beta}}, \vec{\mathsfgreek{\gamma}}, \mathsfgreek{\delta})
\]
holds in the field of rational functions in the parameters. 

Analogously to \eqref{eqn:parametrized-state-sum}, the inverted parametrized state sum admits a simple description:
\[
\mathsf{Z}^{\mathrm{inv}}_\beta(\vec{\mathsfgreek{\alpha}}, \vec{\mathsfgreek{\beta}}, \vec{\mathsfgreek{\gamma}}, \mathsfgreek{\delta}) = 
\frac{(1 - \mathsfgreek{\delta}) w_0}{\det (I - \mathcal{B}_{\mathrm{inv}})} 
\in \mathbb{Q}(\vec{\mathsfgreek{\alpha}}, \vec{\mathsfgreek{\beta}}, \vec{\mathsfgreek{\gamma}}, \mathsfgreek{\delta}),
\]
where $w_0$ denotes the monomial weight of the ground state, i.e., the one which assigns $-1$ to all the inverted strips and $0$ to all non-inverted strips, and $\mathcal{B}_{\mathrm{inv}}$ denotes the transition matrix of the random walk (i.e., Markov chain) corresponding to this inverted parametrized state sum. 
It thus remains to show that
\begin{equation}\label{eqn:det-inversion}
\det(I-\mathcal{B}_{\mathrm{inv}}) = (-1)^{\mathrm{col}_-} w_0 \det(I-\mathcal{B}). 
\end{equation}

By the usual expansion of the determinant, 
\[
\det(I - \mathcal{B}) = \sum_{\mathcal{C}}(-1)^{|\mathcal{C}|}\mathrm{wt}(\mathcal{C}),
\quad
\det(I - \mathcal{B}_{\mathrm{inv}}) = \sum_{\mathcal{C}_{\mathrm{inv}}}(-1)^{|\mathcal{C}_{\mathrm{inv}}|}\mathrm{wt}(\mathcal{C}_{\mathrm{inv}}),
\]
where the sum is over simple multi-cycles\footnote{By a simple multi-cycle, we mean a collection of pairwise vertex-disjoint simple directed cycles.} $\mathcal{C}$ (resp., $\mathcal{C}_{\mathrm{inv}}$) in the directed graph associated to the random walk model with transition matrix $\mathcal{B}$ (resp., $\mathcal{B}_{\mathrm{inv}}$), and $\mathrm{wt}$ denotes the product of edge weights. 

Given any simple multi-cycle $\mathcal{C}$ for $\mathcal{B}$, switching the used and unused strips in each inverted column produces a simple multi-cycle $\mathcal{C}_{\mathrm{inv}}$ for $\mathcal{B}_{\mathrm{inv}}$. 
This gives a bijection between simple multi-cycles in the original random walk model and the inverted random walk model.\footnote{This is analogous to the bijection shown in \cite[Fig.~8]{Park_inverted}, although there the state sum is on a knot diagram rather than a knot holder.} 
Therefore, the desired equality \eqref{eqn:det-inversion} would follow if we show
\begin{equation}\label{eqn:weight-comparison}
(-1)^{|\mathcal{C}_{\mathrm{inv}}|}\mathrm{wt}(\mathcal{C}_{\mathrm{inv}}) = (-1)^{\mathrm{col}_-}w_0\,(-1)^{|\mathcal{C}|}\mathrm{wt}(\mathcal{C})
\end{equation}
under this bijection $\mathcal{C} \leftrightarrow \mathcal{C}_{\mathrm{inv}}$ of simple multi-cycles. 

For this, we may invert negative columns one at a time. 
Suppose $\mathcal{C}'$ is obtained from $\mathcal{C}$ by inverting a single column $c$ (i.e., switching the used and unused strips in column $c$). 
Let $r$ denote the number of connected components of $\mathcal{C}$ within the column $c$. 
A direct comparison of the local weights gives
\[
\mathrm{wt}(\mathcal{C}') = (-1)^{r} w_{0,c} \mathrm{wt}(\mathcal{C}),
\]
while 
\[
|\mathcal{C'}| - |\mathcal{C}| \equiv r+1 \mod 2. 
\]
Hence, 
\[
(-1)^{|\mathcal{C}'|} \mathrm{wt}(\mathcal{C}') 
= - w_{0,c} (-1)^{|\mathcal{C}|} \mathrm{wt}(\mathcal{C}).
\]
Applying this relation over inversion of all negative columns gives \eqref{eqn:weight-comparison} and hence \eqref{eqn:det-inversion}. 
The lemma follows. 
\end{proof}

This lemma shows that the inverted state sum, with the extra sign, has the same perturbative series as the MMR expansion.
Moreover, this inverted state sum converges in the two-variable power series ring $(x^{\frac12} - x^{-\frac12}) \mathbb{Z}[q^{\pm 1}][[x]]$, as all local weights have non-negative $x$-degree and only finitely many states contribute to each fixed $x$-degree. 
It immediately follows that it must be equal to the BPS $q$-series, as defined in \cite{Park_inverted}. 
\begin{cor}\label{cor:inverted-state-sum-equals-BPS}
For any braid-homogeneous knot $K$, 
\[
\widehat{Z}_{S^3 \setminus K}(x,q) = (-1)^{1+\mathrm{col}_-} q^{\frac{w -(n-1)}{2}} x^{\frac{w-n}{2}}
\sum_{\sigma \in \mathcal{S}^{\mathrm{inv}}(\mathcal{H}_\beta, \gamma_e)} w(\sigma),
\]
where both sides are elements of $(x^{\frac12} - x^{-\frac12}) \mathbb{Z}[q^{\pm 1}][[x]]$. 
\end{cor}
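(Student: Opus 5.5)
The plan is to combine Lemma~\ref{lem:inversion} with Theorem~\ref{thm:Ito} and the definition of $\widehat{Z}_{S^3\setminus K}$ from \cite{Park_inverted}. That definition characterizes $\widehat{Z}_{S^3\setminus K}$ as the unique element of $(x^{\frac12}-x^{-\frac12})\mathbb{Z}[q^{\pm1}][[x]]$ whose $\hbar$-expansion (at $q=e^\hbar$, with $x$ fixed and the coefficients re-expanded in $x$) reproduces $(x^{\frac12}-x^{-\frac12})\mathrm{MMR}_K(\hbar,x)$, up to the normalization fixed in that paper. So we need two things: the inverted state sum on the right-hand side is a genuine element of this two-variable ring, and its perturbative expansion is the MMR expansion.

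\textbf{Step 1 (perturbative identification).} By Theorem~\ref{thm:Ito}, $(x^{\frac12}-x^{-\frac12})\mathrm{MMR}_K(\hbar,x)=Z_\beta(\hbar,x)$ in $\mathbb{Q}(x^{\frac12})[[\hbar]]$. By Lemma~\ref{lem:inversion},
\[
Z_\beta(\hbar,x) = (-1)^{\mathrm{col}_-} Z^{\mathrm{inv}}_\beta(\hbar,x).
\]
Unwinding the prefactor $-q^{\frac{w-(n-1)}{2}}x^{\frac{w-n}{2}}$ in the definition of $Z^{\mathrm{inv}}_\beta$ produces exactly the sign $(-1)^{1+\mathrm{col}_-}$ and the monomial in the statement.

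\textbf{Step 2 (convergence).} Check that the inverted state sum converges in $(x^{\frac12}-x^{-\frac12})\mathbb{Z}[q^{\pm1}][[x]]$. On a positive column the local weight $W^{a',b',c'}_{a,b,c}$ has $x$-degree $\frac{b+b'}{2}\ge 0$. On an inverted negative column all strip labels are negative, and the weight is $W$ with $q\leftrightarrow q^{-1}$ and $x\leftrightarrow x^{-1}$. Its exponent $-\frac{b+b'}{2}$ is therefore positive, and the $q$-multinomials with negative entries extend, via $\qbin{-n}{k}$-type identities, to Laurent polynomials in $q$, possibly after fixing an ordering so that they are honest polynomials in $q^{\pm1}$. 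Hence every local factor lies in $\mathbb{Z}[q^{\pm\frac12}]$ times a nonnegative power of $x$, and the $x$-degree of a state grows with the total magnitude of its labels. Thus only finitely many states contribute in each degree. The half-integer powers of $q$ combine to integral ones, as in the positive case.

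\textbf{Step 3 (the main obstacle).} Show that the $q$-expansion of this convergent two-variable series, set to $q=e^\hbar$, agrees term by term with the $\hbar$-expansion used in Lemma~\ref{lem:inversion}. These are a priori different ways of summing the same inverted state sum. I would argue as in \cite{Park_inverted}: for fixed $x$-degree the sum is finite, so expanding in $\hbar$ commutes with it. The Rozansky parametrized argument then shows that the coefficient of $\hbar^d$ is $\mathsf{D}_d\mathsf{Z}^{\mathrm{inv}}_\beta$ evaluated at the specialization, expanded as a power series in $x$. This gives the identity in $\mathbb{Q}[[x]][[\hbar]]$ compatibly with $\mathbb{Q}(x^{\frac12})[[\hbar]]$. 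Uniqueness of the resummation from \cite{Park_inverted} then identifies the series with $\widehat{Z}_{S^3\setminus K}$.
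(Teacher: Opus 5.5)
Your proposal is correct and follows essentially the same route as the paper: it combines Theorem~\ref{thm:Ito} with Lemma~\ref{lem:inversion} to match the perturbative expansion with the MMR expansion. It then observes that the inverted state sum converges, because every local weight has nonnegative $x$-degree and only finitely many states occur in each degree, and concludes by the unique-resummation characterization of $\widehat{Z}$ from \cite{Park_inverted}. Your Step~3 checks that the $\hbar$-expansion of the convergent two-variable series agrees coefficientwise with the Rozansky-type series $\sum_d \hbar^d\,\mathsf{D}_d \mathsf{Z}^{\mathrm{inv}}_\beta$; this only makes explicit a compatibility that the paper takes for granted when it states Lemma~\ref{lem:inversion}.
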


\textbf{Step 2: Identification with the state sum on $\KH_K$.}
It remains to show that the right-hand side of Corollary \ref{cor:inverted-state-sum-equals-BPS} actually computes the state sum on the knot holder $\KH_K$ from Example \ref{eg:braid-homogeneous-link-knot-holder}, up to some monomial factor. 

The key observation is the following identity which relates the weight in the inverted state sum on $\KH_\beta$ with the weight in the state sum on $\KH_K$: 
\begin{align*}
&
\vcenter{\hbox{
\begin{tikzpicture}[xscale=-1]
\draw (1.75, 1) to[out=112.5, in=-90] (1.5, 2);
\draw (-0.75, 1) to[out=77.5, in=-90] (-0.5, 2);
\draw (1.5, 0) to[out=90, in=-90] (0, 2);
\filldraw[white, opacity=0.7] (1, 0) to[out=90, in=-90] (0, 2) -- (1, 2) to[out=-90, in=90] (0, 0) -- cycle;
\draw (1, 0) to[out=90, in=-90] (0, 2);
\draw[green, ->] (-1, 0) -- (-1, 2);
\draw[green, ->] (0.5, 0) -- (0.5, 2);
\draw[green, ->] (2, 0) -- (2, 2);
\draw[green, ->] (1.75, 1) to[out=150, in=-90] (0.5, 2);
\draw[red, ->] (1.75, 1) to[out=120, in=-90] (0.5, 1.8) -- (0.5, 2);
\draw[red, ->] (-0.75, 1) to[out=60, in=-90] (0.5, 1.8) -- (0.5, 2);
\draw[white, line width=4] (0, 0) to[out=90, in=-90] (1, 2);
\draw[white, line width=4] (-0.5, 0) to[out=90, in=-90] (1, 2);
\filldraw[white, opacity=0.7] (-0.5, 0) to[out=90, in=-90] (1, 2) -- (0, 2) to[out=-90, in=77.5] (-0.75, 1) -- cycle;
\draw[green, ->] (-0.75, 1) to[out=30, in=-90] (0.5, 2);
\draw (-1.5, 0) -- (-0.5, 0);
\draw (-1.5, 2) -- (-0.5, 2);
\draw (-1.5, 0) -- (-1.5, 2);
\draw (0, 0) -- (1, 0);
\draw (0, 2) -- (1, 2);
\draw (1.5, 0) -- (2.5, 0);
\draw (1.5, 2) -- (2.5, 2);
\draw (2.5, 0) -- (2.5, 2);
\draw (0, 0) to[out=90, in=-90] (1, 2);
\draw (1.75, 1) to[out=112.5, in=-90] (1, 2);
\draw (-0.75, 1) to[out=77.5, in=-90] (0, 2);
\draw (-0.5, 0) to[out=90, in=-90] (1, 2);
\node[below] at (-1, 0){$c$};
\node[below] at (0.5, 0){$-1-b$};
\node[below] at (2, 0){$a$};
\node[above] at (-1, 2){$c'$};
\node[above] at (0.5, 2){$-1-b'$};
\node[above] at (2, 2){$a'$};
\draw[thick, ->] (0.5, 0.1) -- (0.5, 0.4);
\draw[thick, ->] (0.5, 1.6) -- (0.5, 1.9);
\draw[thick, ->] (2, 0.1) -- (2, 0.4);
\draw[thick, ->] (2, 1.6) -- (2, 1.9);
\draw[thick, ->] (-1, 0.1) -- (-1, 0.4);
\draw[thick, ->] (-1, 1.6) -- (-1, 1.9);
\end{tikzpicture}
}}
\\
&= 
\delta_{a+(-1-b)+c, a'+(-1-b')+c'}
(-1)^{-1-b}
q^{-\frac{(-1-b)^2+(-1-b')}{2}}
\qbin{-1-b'}{a-a', -1-b, c-c'}_{q^{-1}}
x^{-\frac{(-1-b)+(-1-b')}{2}} \\
&= 
-x
\cdot 
\delta_{a + b' + c, a' + b + c'} (-1)^{b'} q^{-\frac{b'^2 + b}{2}}
\qbin{b}{a-a', b', c-c'}_{q^{-1}}
x^{\frac{b+b'}{2}}
\\
&= - x \cdot q^{\frac{b'-b}{2}}
\vcenter{\hbox{
\begin{tikzpicture}[xscale=-1]
\draw (-1.5, 0) -- (-0.5, 0);
\draw (-1.5, 2) -- (-0.5, 2);
\draw (-1.5, 0) -- (-1.5, 2);
\draw (-0.75, 1) to[out=77.5, in=-90] (-0.5, 2);
\draw (-0.75, 1) to[out=50, in=90] (1, 0);
\draw[green, ->] (-1, 0) -- (-1, 2);
\draw[green, ->] (2, 0) -- (2, 2);
\draw[green] (-0.75, 1) to[out=30, in=90] (0.5, 0);
\draw[red] (-0.75, 1) to[out=40, in=90] (0.5, 0.2) -- (0.5, 0);
\filldraw[white, opacity=0.7] (1, 0) to[out=90, in=-90] (0, 2) -- (1, 2) to[out=-90, in=90] (0, 0) -- cycle;
\draw[green, <-] (0.5, 0) -- (0.5, 2);
\draw (1, 0) to[out=90, in=-90] (0, 2);
\draw[white, line width=4] (0, 0) to[out=90, in=-90] (1, 2);
\draw[white, line width=4] (1.75, 1) to[out=130, in=90] (0, 0);
\draw[red] (1.75, 1) to[out=140, in=90] (0.5, 0.2) -- (0.5, 0);
\filldraw[white, opacity=0.7] (1.75, 1) to[out=130, in=90] (0, 0) -- (1, 0) to[out=90, in=180] (1.25, 0.25) -- cycle;
\draw[green] (1.75, 1) to[out=150, in=90] (0.5, 0);
\draw (0, 0) to[out=90, in=-90] (1, 2);
\draw (1.75, 1) to[out=112.5, in=-90] (1.5, 2);
\draw (1.75, 1) to[out=130, in=90] (0, 0);
\draw (1.5, 0) to[out=90, in=0] (1.25, 0.25) to[out=180, in=90] (1, 0);
\draw (0, 0) -- (1, 0);
\draw (0, 2) -- (1, 2);
\draw (1.5, 0) -- (2.5, 0);
\draw (1.5, 2) -- (2.5, 2);
\draw (2.5, 0) -- (2.5, 2);
\draw (-0.5, 0) to[out=90, in=180] (-0.25, 0.25) to[out=0, in=90] (0, 0);
\node[below] at (-1, 0){$c$};
\node[below] at (0.5, 0){$b$};
\node[below] at (2, 0){$a$};
\node[above] at (-1, 2){$c'$};
\node[above] at (0.5, 2){$b'$};
\node[above] at (2, 2){$a'$};
\draw[thick, <-] (0.5, 0.1) -- (0.5, 0.4);
\draw[thick, <-] (0.5, 1.6) -- (0.5, 1.9);
\draw[thick, ->] (2, 0.1) -- (2, 0.4);
\draw[thick, ->] (2, 1.6) -- (2, 1.9);
\draw[thick, ->] (-1, 0.1) -- (-1, 0.4);
\draw[thick, ->] (-1, 1.6) -- (-1, 1.9);
\end{tikzpicture}
}}.
\end{align*}
Since we are inverting a whole column of strips, the product of the factors $q^{\frac{b'-b}{2}}$ will cancel, but the factor $(-x)$ will survive, one for each negative crossing. 

On top of this, for each inverted column, we need to compare the contributions from the linking between
\begin{enumerate}
\item hyperbolic flow loops (in that column) and elliptic flow loop, 
\item hyperbolic flow loops (in that column) and elliptic framing line, and
\item hyperbolic framing line (in that column) and elliptic flow loop.
\end{enumerate}
Suppose $b$ is the nonnegative integer that a state on $\KH_K$ assigns to a strip at the bottom of a column of negative crossings. 
Then the product of the contributions listed above in the state sum on $\KH_K$ is given by
\[
q^{-2\epsilon b} \cdot q^{b} \cdot q^{-\epsilon}.
\]
On the other hand, the corresponding inverted state on $\KH_\beta$ would assign the negative integer $-1-b$ to that strip, and the product of the contributions listed above in the inverted state sum on $\KH_\beta$ is given by 
\[
q^{2\epsilon (-1-b)} \cdot q^{-(-1-b)} \cdot q^{\epsilon} = 
(q^{-2\epsilon b} \cdot q^{b} \cdot q^{-\epsilon}) \cdot q.
\]
That is, it is $q$ times the one in $\KH_K$, and hence there is an extra factor of $q$ for each inverted column. 

Putting everything together, we conclude, as desired, that
\begin{align*}
\widehat{Z}_{S^3 \setminus K}(x,q) &= 
(-1)^{1+\mathrm{col}_-} q^{\frac{w -(n-1)}{2}} x^{\frac{w-n}{2}}
\sum_{\sigma \in \mathcal{S}^{\mathrm{inv}}(\mathcal{H}_\beta, \gamma_e)} w(\sigma) \\
&= 
(-1)^{1+\mathrm{col}_-} q^{\frac{w -(n-1)}{2}} x^{\frac{w-n}{2}} \cdot (-x)^{\mathrm{cr}_-}q^{\mathrm{col}_-}\;
\Phi_{S^3 \setminus K}(x,q) \\
&= (-1)^{1 + \mathrm{cr}_- + \mathrm{col}_-} q^{\frac{w-(n-1)}{2} + \mathrm{col}_-} x^{\frac{w-n}{2} + \mathrm{cr}_-}\; \Phi_{S^3 \setminus K}(x,q) \\
&= (-1)^{1 + \lambda} q^{g-\lambda} x^{-\frac12 + g} \;\Phi_{S^3 \setminus K}(x,q),
\end{align*}
where $\mathrm{cr}_-$ denotes the total number of negative crossings in $\beta$, and $g$ is the Seifert genus of $K$, and $\lambda$ is the Hopf invariant (a.k.a.~enhanced Milnor invariant \cite{Rudolph_Hopf}) of $K$;\footnote{The appearance of the Hopf invariant $\lambda$ in the exponent of $q$ in the leading term of the BPS $q$-series was observed previously by \cite{OSSS}.} 
the last equality uses the standard formulas for the Seifert genus and the Hopf invariant for braid-homogeneous knots. 
\end{proof}

Note that this extra monomial is consistent with our previous computations for the trefoil and figure-eight knots in Examples \ref{eg:trefoil-state-sum} and \ref{eg:figure-eight-state-sum}.

\appendix

\section{An invariant for fibered cones}\label{sec:fibered_cone}
Here, we record a natural generalization of Theorem~\ref{mainthm:flow-loop-count} to any fibered 3-manifold, as it may be of independent interest. 

Let $Y$ be a 3-manifold, possibly with boundary, and suppose $\pi : Y \rightarrow S^1$ is a fibration, so that $Y$ is a mapping torus of the monodromy of a fiber $F$. 
Choose a generic self-diffeomorphism $\varphi : F \rightarrow F$ representing the monodromy, and by small isotopy of $\varphi$ near the boundary $\partial F$, we assume that the boundary is a sink. 
We are interested in the count of periodic orbits of the suspension flow of $\varphi$. 
Note that, by our assumption that the boundary is a sink, there are no periodic orbits in some neighborhood of the boundary. 
Choose a generic vector field $\xi_{\mathrm{fr}}$ on $Y$, so that the locus where it is parallel to the suspension flow is given by some link $\ell_{\mathrm{fr}}$ in $Y$. 
As remarked in Section~\ref{subsec:skein-valued_flow_loop_count}, the $\mathrm{GL}_1$-skein-valued flow loop count generalizes straightforwardly to this setup, with the same proof.  
That is, the count (as in Definition~\ref{defn:skein-valued-flow-loop-count})
\[
\Phi_{Y, \pi} 
:= \sum_{\gamma \in \mathcal{O}} (-1)^{\mathrm{e}(\gamma) + \mathrm{nh}(\gamma)} [\gamma]
\;\in \widehat{\Sk}^{\mathrm{GL}_1}_q(Y, \ell_{\mathrm{fr}})
,
\]
where, as before, $\mathcal{O}$ is the set of multi-flow loops, and the $\mathrm{GL}_1$-skein module is action-completed by the closed 1-form $\alpha_\pi := \pi^*(d\theta)$, is well-defined and is invariant under deformation of $\varphi$ and $\xi_{\mathrm{fr}}$.\footnote{While this invariant lives in $\widehat{\Sk}^{\mathrm{GL}_1}_q(Y, \ell_{\mathrm{fr}})$, which depends on the framing line $\ell_{\mathrm{fr}}$ given by the locus where the suspension flow and $\xi_{\mathrm{fr}}$ are parallel, hence a priori depends on $\varphi$ and $\xi_{\mathrm{fr}}$, any two choices of $(\varphi, \xi_{\mathrm{fr}})$ can be smoothly deformed from one to another, which induces an isomorphism between the corresponding $\widehat{\Sk}^{\mathrm{GL}_1}_q(Y, \ell_{\mathrm{fr}})$.
That is, $\Phi_{Y,\pi}$ gives a global section over such choices.} 

When $Y$ happens to be the complement $S^3 \setminus K$ of some link $K \subset S^3$, then there is a well-defined notion of linking number, inducing a canonical isomorphism (analogously to Proposition~\ref{prop:GL_1-skein_series_identification}) between $\widehat{\Sk}^{\mathrm{GL}_1}_q(S^3 \setminus K, \ell_{\mathrm{fr}})$ and the completion $R_{\alpha_\pi}$ of $\mathbb{Z}[q^{\pm}][H_1(S^3 \setminus K)] \cong \mathbb{Z}[q^{\pm}][x_1^{\pm 1}, \cdots, x_s^{\pm 1}]$ where we allow infinite sums in the growing $\alpha_\pi$-direction; 
more precisely, the completion is given by
\[
R_\alpha := 
\left\{\sum_{h \in H_1(S^3 \setminus K)} a_h \mathbf{x}^h \;\bigg\vert\; a_h \in \mathbb{Z}[q^{\pm}],\; \#\{h : a_h \neq 0, \alpha(h) \leq N\} < \infty \text{ for every }N \right\}.
\]
In this way, we obtain a multi-variable series invariant for any fibration $\pi : S^3 \setminus K \rightarrow S^1$ of a link complement $S^3 \setminus K$. 
Note that we are not assuming that the fibers are Seifert surfaces of $K$. 
When the fibration comes from that of a fibered link in the usual sense, then this reduces to Remark~\ref{rmk:fibered_link}. 

It turns out that there is an interesting connection between these flow loop counts and fibered cones. 
We first briefly review Thurston's theory of fibered cones. 
Recall that the \emph{Thurston norm} \cite{Thurston} is a seminorm on $H_2(Y, \partial Y; \mathbb{R}) \cong H^1(Y; \mathbb{R})$ defined by
\[
||c||_T := \min_{S} \sum_{i=1}^{n}\chi_{-}(S_i),
\]
where the minimum is taken over all embedded surfaces $S = \sqcup_{i} S_i$ representing $c$, where $S_i$ are connected components, and 
\[
\chi_-(\Sigma) := \max\{0, -\chi(\Sigma)\}.
\]
The unit ball $B_T \subseteq H^1(Y; \mathbb{R})$ with respect to the Thurston norm is called the \emph{Thurston norm ball}, which is a finite-sided rational convex polyhedron
 \cite{Thurston}. 
This is a genuine compact polytope when $Y$ is hyperbolic. 

For a top-dimensional face $f$ of $B_T$, let $C(f)$ denote the open cone from the origin through the relative interior of $f$. 
We say that an integral class $a \in H^1(Y;\mathbb{Z})$ is \emph{fibered} if it can be represented by a fibration $\pi : Y \rightarrow S^1$, $a = \pi^*[d\theta]$. 
Thurston's fibered face theorem says that there is a distinguished set of faces of $B_T$ which determine all fibrations of $Y$ over $S^1$: 
\begin{thm}[{\cite[Thm.~5]{Thurston}}]
If one integral class in $C(f)$ is fibered, then every integral class in $C(f)$ is fibered. 
\end{thm}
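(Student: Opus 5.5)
This statement is Thurston's fibered face theorem, quoted from \cite{Thurston}; in the paper it is used as a black box. A self-contained route would follow Thurston's original argument, combining the openness of fiberedness with the linearity of the Thurston norm on each cone $C(f)$.

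\textbf{Step 1: openness (Tischler).} Suppose $a = \pi^*[d\theta]$ is fibered, so $a$ has a nowhere-vanishing closed representative $\omega = \pi^*(d\theta)$. Nonvanishing is an open condition, so for any closed $1$-form $\eta$ with small $C^0$-norm, $\omega + \eta$ is still nonvanishing. Consequently every class in a neighborhood $U$ of $a$ in $H^1(Y;\mathbb{R})$ has a nonvanishing closed representative. Tischler's theorem then says that every rational class in $U$ is represented by a nonvanishing closed form with rational periods. After scaling to an integral class, such a form is $\pi'^*(d\theta)$ for a fibration $\pi'$ (integrate the form). Since fiberedness is invariant under positive scaling, the set of fibered classes is an open cone intersected with the lattice.

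\textbf{Step 2: norm and Euler class.} For a fibered class, the fiber $F$ is norm-minimizing, so $\|a\|_T = -\chi(F) = -\langle e(T^\perp), a\rangle$, where $e$ is the Euler class of the plane field tangent to the fibers. The fibers of nearby classes are transverse to the same flow, so their tangent plane fields are homotopic. Hence $\|\cdot\|_T$ agrees with the linear functional $-e$ on the open cone found in Step 1. Because $\|\cdot\|_T$ is a seminorm that is linear on this open set, the set lies in the cone $C(f)$ over a single top-dimensional face $f$ on which $\|\cdot\|_T = -e$.

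\textbf{Step 3: all of $C(f)$ is fibered.} Let $\Omega$ be the set of classes in $C(f)$ with a nonvanishing closed representative. By Step 1, $\Omega$ is open and nonempty. The main obstacle is showing that $\Omega$ is also closed in $C(f)$. Thurston handles this by taking a limit class $b$ in $C(f)$ and a norm-minimizing surface $S$ representing it, then using the flow of a fibration near $b$. Together with the equality $\|b\|_T = -e(b)$, this forces $S$ to be isotopic to a surface transverse to the flow, hence a fiber. The delicate point is exactly this equality case: the Euler class bound is attained only by surfaces transverse to the flow.

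If the ambient setting allows it, a simpler alternative is Fried's cross-section theorem: for the suspension flow, the fibered classes are exactly those that are positive on all closed orbits. That condition is open and convex. Since $C(f)$ is connected, closedness together with openness gives $\Omega = C(f)$, and Step 1 then shows that every integral class in $C(f)$ is fibered.
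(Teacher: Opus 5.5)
The paper gives no proof of this statement: it is quoted from Thurston and used as a black box in the appendix, so your outline has to stand on its own. Steps 1 and 2 are fine. Perturbing the nonvanishing form gives openness, and since $\|\cdot\|_T=-e$ near a fibered class, convexity of the norm places that neighbourhood in the open cone over one top-dimensional face, on all of which $\|\cdot\|_T=-e$.

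\textbf{The gap is Step 3.} This is the entire content of the theorem, and the mechanism you propose there is false. Attaining $\chi_-(S)=-\langle e,[S]\rangle$ does \emph{not} force a norm-minimizing $S$ to be isotopic to a surface transverse to a given flow. For example, in $F\times S^1$, take the suspension flow of a point-push along an essential loop $c\subset F$, identified with $F\times S^1$ via the isotopy to the identity. It has a closed orbit in the class $([c],1)$. If $\phi\in H^1(F;\mathbb{Z})$ has $\phi([c])\le -2$, then $(\phi,1)$ lies in the same fibered cone as $[F]=(0,1)$ and its fiber attains the Euler class bound. Yet the fiber has negative algebraic intersection with that orbit, so it cannot be isotoped transverse to the flow. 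In general the cross-section cone $C_v$ of one flow can be a proper subcone of $C(f)$; this is consistent with the paper's appendix covering $C(f)$ by the sets $U_v=C_v\cap C(f)$. So in your closedness argument, the flows of fibrations near a limit class $b$ need not have $b$ in their cross-section cones. Also, an irrational $b$ is not represented by any surface.

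The Fried alternative does not repair this, for three reasons:
\begin{itemize}
\item Fried's criterion is positivity on the closed set of homology directions, not merely on the closed orbits; positivity on orbits alone is not an open condition in general.
\item It describes $C_v$ only for the flow at hand. The equality $C_v=C(f)$ is a theorem about the pseudo-Anosov suspension flow and fails for general suspension flows, as the example shows.
\item Openness plus convexity still would not give closedness in $C(f)$.
\end{itemize}

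\textbf{The missing idea.} Thurston's argument needs much less than transversality. Let $\omega$ be a nonvanishing closed form whose integral class lies in $C(f)$, and let $\mathcal F=\ker\omega$.
\begin{itemize}
\item For an integral $\alpha'\in C(f)$, take a norm-minimizing incompressible surface $S$ dual to $\alpha'$. By the Roussarie--Thurston general position theorem, isotope it so that its only tangencies with $\mathcal F$ are saddles.
\item Each saddle contributes $-1$ to $\chi(S)$, and $-1$ or $+1$ to $\langle e,[S]\rangle$ according as the coorientations of $S$ and $\mathcal F$ agree or disagree there. So the equality $\chi_-(S)=-\langle e,[S]\rangle$, valid on all of $C(f)$ by Step 2, forces every tangency to be coherently cooriented.
\item Let $\eta_S=\rho(s)\,ds$, with $\rho\ge 0$, be a bump form on a thin collar $S\times[-\varepsilon,\varepsilon]$. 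Then $\omega+t\eta_S$ can vanish only where $\omega$ restricts to zero on a slice $S\times\{s\}$. For small $\varepsilon$ this happens only near the tangencies of $S$, where $\omega=c\,ds$ with $c=\omega(\partial_s)>0$ by continuity. There $\omega+t\eta_S=(c+t\rho)\,ds\neq 0$.
\item Hence $\omega+t\eta_S$ is nonvanishing for every $t\ge 0$. Equivalently, every class $s[\omega]+(1-s)\alpha'$ with $0<s\le 1$ is represented by a nonvanishing closed form.
\item Since $C(f)$ is open, any integral $\alpha\in C(f)$ can be written this way, with $s$ rational and $\alpha'\in C(f)$ rational, rescaled to be integral. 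Integrating the resulting form, which has integral periods, gives the fibration.
\end{itemize}
This replaces your open--closed argument and never requires $S$ to be transverse to any particular flow.
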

Such a face $f$ is called a \emph{fibered face} and the associated cone $C(f)$ a \emph{fibered cone}. 
In \cite[Thm.~5]{Thurston}, Thurston also shows that the union of fibered cones is exactly the set of cohomology classes in $H^1(Y;\mathbb{R})$ representable by nowhere-vanishing closed 1-forms. 

Fibered cones can be covered by open sets each of which can be represented by a single flow, as we review below. 
For any nowhere-vanishing vector field $v$ on $Y$ tangent to $\partial Y$, set 
\[
C_v := \{a \in H^1(Y;\mathbb{R}) \;\vert\; \exists \alpha \text{ with } d\alpha = 0,\; \alpha(v) > 0 \text{ everywhere, and } a=[\alpha]\}.
\]
Each $C_v$ is an open convex cone, so $U_v := C_v \cap C(f)$ is also open. 
For any nowhere-vanishing closed 1-form $\alpha$ representing a class $[\alpha] \in C(f)$, there is a vector field such that $\alpha(v) > 0$ everywhere, so the sets $U_v$ form an open cover of the fibered cone:
\[
C(f) = \bigcup_{v} U_v.
\]
Moreover, for any integral class $[\alpha] \in C_v \cap H^1(Y;\mathbb{Z})$ represented by a closed 1-form $\alpha$ with $\alpha(v) > 0$ and integral periods, we get a fibration $\pi : Y \rightarrow S^1$ with $\alpha = \pi^*(d\theta)$ by integrating $\alpha$ starting from some base point in $Y$. 

As a result, the flow loop counts in the same fibered cone are all the same: 
\begin{cor}
Let $Y$ be a 3-manifold, possibly with boundary, and let $\pi : Y \rightarrow S^1$ be a fibration representing some fibered class $[\alpha] = \pi^*[d\theta] \in H^1(Y;\mathbb{Z})$. 
Then, the $\mathrm{GL}_1$-skein-valued flow loop count $\Phi_{Y,\pi}$ discussed above depends only on the fibered cone $C(f)$ containing $[\alpha]$, not on the precise fibered class $[\alpha]$ in $C(f)$. 
That is, we get an invariant $\Phi_{Y,C(f)}$ for each fibered cone $C(f)$. 
\end{cor}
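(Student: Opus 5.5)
Take two fibrations $\pi_0,\pi_1 : Y \to S^1$ whose integral classes $[\alpha_0],[\alpha_1]$ lie in the same fibered cone $C(f)$. The plan is to show $\Phi_{Y,\pi_0} = \Phi_{Y,\pi_1}$ after making the needed identifications. Since $C(f)$ is open, convex and connected, and the sets $U_v$ cover it, there is a chain $[\alpha_0] = a_0, a_1, \dots, a_k = [\alpha_1]$ of integral classes in $C(f)$ with $a_{j-1}$ and $a_j$ in a common $U_{v_j}$. One gets such a chain by covering the straight segment from $[\alpha_0]$ to $[\alpha_1]$ with finitely many $U_v$ (compactness), then picking integral points in the pairwise overlaps. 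The overlaps are open cones, so they contain integral classes, possibly after scaling. This reduces the statement to the following case: $[\alpha]$ and $[\alpha']$ both lie in $C_v$ for one nowhere-vanishing vector field $v$.

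In that case I would use $v$ itself as the flow for both fibrations. Pick closed $1$-forms $\alpha,\alpha'$ with integral periods and $\alpha(v),\alpha'(v)>0$. Integrating them gives fibrations $\pi,\pi'$, and $v$ is transverse to the fibers of both. After a reparametrization, $v$ is therefore a suspension flow for the monodromy of either fibration. Both counts are then sums over the same set of periodic orbits of $v$ (after a generic perturbation to make them nondegenerate, with $\partial Y$ made a sink). The elliptic / positive hyperbolic / negative hyperbolic type of an orbit is intrinsic to the flow, since it depends only on the linearized return map. So the signs $(-1)^{\mathrm{e}+\mathrm{nh}}$ agree, and so do the links $[\gamma]$. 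For the framing vector field we can use the same $\xi_{\mathrm{fr}}$, namely $v$ plus a perturbation that is generic with respect to $v$. The framing line $\ell_{\mathrm{fr}}$, the locus where $\xi_{\mathrm{fr}}$ is parallel to $v$, is then literally the same link in both setups. The only difference is the ambient completion: $\Phi$ lives in $\widehat{\Sk}^{\mathrm{GL}_1}_q(Y,\ell_{\mathrm{fr}})$ completed with respect to $\alpha$ or to $\alpha'$. Because $\alpha$ and $\alpha'$ are both positive on $v$, both are positive on every multi-flow loop. By compactness of the orbit set below any action bound, the formal sum defines an element of each completion, and both completions contain the subring of sums supported on the positive cone of $v$-orbits. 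So the two counts are the same element there. The deformation invariance proven earlier (the argument of Theorem~\ref{mainthm:flow-loop-count}, via bifurcation analysis and Remark~\ref{rmk:framing-line-canonical-identification}) shows that this element does not depend on the auxiliary choices within each fibration. Hence $\Phi_{Y,\pi}=\Phi_{Y,\pi'}$.

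The main obstacle is making the identification of completions precise and canonical. One must check that ``the'' invariant for $\pi$, defined via an arbitrary suspension flow and $\xi_{\mathrm{fr}}$, agrees with the count for $v$. That is, $v$ must be an admissible Morse flow for $\pi$: it is dual to $\alpha$ for a suitable metric, which holds because $\alpha(v)>0$ allows choosing a metric with $v=\alpha^\sharp$ up to positive rescaling. One must also check that generic deformations inside the class of flows with $\alpha(v)>0$ stay within the action-completed setting so that the bifurcation argument applies. The cleanest formulation is probably to state the conclusion as: $\Phi_{Y,C(f)}$ is a well-defined element of the group ring completed along the positive dual cone of $C(f)$. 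When $Y=S^3\setminus K$, this means $R_\alpha$ is independent of $\alpha \in C(f)$ on the relevant support.
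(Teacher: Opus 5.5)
Your proposal is correct and follows essentially the same route as the paper: fibered classes lying in a common $U_v$ have the same count because a single flow generated by $v$ computes both, and any two fibered classes in the connected cone $C(f)$ are joined by a chain of pairwise overlapping $U_v$'s, each overlap containing an integral class after rescaling. You additionally spell out two points the paper leaves implicit: the choice of metric making $v$ dual to $\alpha$, and the comparison of the $\alpha$- and $\alpha'$-completions, which the paper handles by working in $R_{C(f)} = \cap_{\alpha \in C(f)} R_\alpha$.
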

\begin{proof}
If two fibered classes are contained in the same open set $U_v$, then they can be represented by the same flow $\phi_v$ generated by $v$, so their flow loop count is the same. 
Now suppose $U_v \cap U_w \neq \emptyset$. 
Since $U_v \cap U_w$ is an open cone, it contains a rational class and hence, after rescaling, an integral fibered class $[\alpha]$. 
Since we can compute the flow loop count associated to $[\alpha]$ using either $v$ or $w$, the flow loop counts on $U_v$ and $U_w$ agree. 
Since the sets $U_v$ form an open cover of the connected cone $C(f)$, any two fibered classes in $C(f)$ can be joined by a chain of pairwise overlapping sets, and it follows that the flow loop count is independent of the precise fibered class in $C(f)$. 
\end{proof}
When $Y$ is the complement $S^3 \setminus K$ of a hyperbolic link $K$, we thus obtain an invariant $\Phi_{S^3 \setminus K, C(f)}$ of a fibered cone $C(f)$, which lives in $R_{C(f)} := \cap_{\alpha \in C(f)} R_\alpha$. 

A natural question is whether the same invariants can be obtained from quantum group invariants, similarly to Conjecture~\ref{conj:flow-equals-quantum}. 
Indeed, in recent work \cite{PassaroSanMartinSuarez}, Passaro and San Martin Suarez observed that, if a link $K$ admits a certain ``$\alpha$-nice'' diagram, then one can define a quantum invariant of $K$ living in $R_\alpha$. 
They further conjecture that their invariant is actually an invariant of the fibered cone containing $\alpha$. 
Thus, the following is a natural extension of Conjecture~\ref{conj:flow-equals-quantum}:
\begin{conj}
For any link $K$ and its fibered cone $C(f)$, the dynamical invariant $\Phi_{S^3 \setminus K, C(f)} \in R_{C(f)}$ agrees with the quantum invariant studied by \cite{PassaroSanMartinSuarez}.
\end{conj}

\bibliography{ref}
\bibliographystyle{alpha}

\end{document}